\newcommand{\NN}{{\mathbb N}}
\newcommand{\ZZ}{\mathbb Z}
\newcommand{\QQ}{{\mathbb Q}}
\newcommand{\FF}{{\mathbb F}}
\def\M{{\mathcal M}}
\def\P{{\mathcal P}}
\def\Per{{\rm Per}}
\def\Aper{{\rm Aper}}
\def\Stab{{\rm Stab}}
\numberwithin{equation}{section}
\newtheorem{theo}{Theorem}
\newtheorem{proposition}[theo]{Proposition}
\newtheorem{coro}[theo]{Corollary}
\newtheorem{lemma}[theo]{Lemma}
\newtheorem{defi}[theo]{Definition}
\newtheorem{remark}[theo]{Remark}
\definecolor{RED}{rgb}{1,0,0}\definecolor{BLUE}{rgb}{0,0,1} 
\author[]{Mar\'{\i}a Isabel Cortez}
\address{Facultad de Matem\'aticas, Pontificia Universidad Cat\'olica de Chile. Edificio Rolando Chuaqui, Campus San Joaquín. Avda. Vicuña Mackenna 4860, Macul, Chile.}
\email{maria.cortez@uc.cl}
\thanks{Both authors thanks Lorentz Center, where part of this work was done.}
\author[]{Jaime G\'omez}
\address{Facultad de Matem\'aticas, Pontificia Universidad Cat\'olica de Chile. Edificio Rolando Chuaqui, Campus San Joaquín. Avda. Vicuña Mackenna 4860, Macul, Chile.}
\email{jagomez7@uc.cl}
\thanks{J. Gómez was supported by ANID/ Doctorado Nacional No. 21200054}
\begin{document}

\title{Almost 1-1 extensions of Furstenberg-Weiss type and  test for amenability.}

\begin{abstract}
Let $G$ be an infinite residually finite group. We show that for every minimal equicontinuous Cantor system $(Z,G)$ with a free orbit, and for every minimal extension $(Y,G)$ of $(Z,G)$,   there exist a minimal almost 1-1 extension $(X,G)$ of $(Z,G)$ and a Borel equivariant map $\psi:Y\to X$  that induces an affine bijection $\psi^*$ between $\M(Y,G)$ and $\M(X,G)$,  the spaces of invariant probability measures of $(Y,G)$ and $(X,G)$, respectively. If $Y$ is a Cantor set, then $(Y,G)$ and $(X,G)$    are  Borel isomorphic, i.e., $\psi^*$ is also a homeomorphism.  As an application, we show that the family of Toeplitz subshifts is a test for amenability for residually finite groups, i.e., a residually finite group $G$ is amenable if and only if every Toeplitz $G$-subshift has invariant probability measures.
 \end{abstract}

\maketitle

\section{Introduction}

Let $G$ be a countable infinite group. In \cite{FW89} (see also \cite{W12}), Furstenberg and Weiss prove 
that if there is a topological factor map $\pi:Y\to Z$ from a transitive system  $(Y,G)$ to a minimal system $(Z,G)$, with $Z$ being totally disconnected, then there exist a minimal dynamical system $(X,G)$, an almost one-to-one factor map $\tau:X\to Z$, and a Borel one-to-one equivariant map $\theta:Y_0\subseteq Y\to X$, where $Y_0$ is the pre-image by $\pi$ of a  full measure set $Z_0$ of $Z$, such that $\pi=\tau\circ\theta$. As a consequence, the invariant probability measures of $(X,G)$  supported on $\theta(Y_0)$ are in a bijection with the invariant probability measures of $(Y,G)$. In other words, the almost 1-1 extension $(X,G)$ has at least as many invariant probability measures as $(Y,G)$.  Later, in  \cite{DL98}, Downarowicz and Lacroix improve this result for the case $G=\ZZ$, relaxing some hypotheses on $(Y,\ZZ)$ and $(Z,\ZZ)$, and building a Borel map $\theta$  that induces an affine homeomorphism between the spaces of invariant probability measures of $(Y,\ZZ)$ and $(X,\ZZ)$. In this case, $(Y,\ZZ)$ and the almost 1-1 extension $(X,\ZZ)$ have the same spaces of invariant probability measures. 

This article focuses on the case that $G$ is an infinite residually finite group, $(Z,G)$ is a minimal equicontinuous Cantor system with a free orbit, and $(Y,G)$ is a minimal extension of $(Z,G)$. Inspired by the ideas in \cite{FW89}, we show it is possible to construct $(X,G)$ in order that the map $\theta$ induces an   affine bijection (a homeomorphism if $Y$ is totally disconnected) between 
$\M(Y,G)$ and $\M(X,G)$, the spaces of invariant probability measures of $(Y,G)$ and $(X,G)$, respectively.  In particular, if $G$ is non-amenable and $(Y,G)$ has no invariant measures, then neither does $(X,G)$. Thus, as a corollary, we get that if $G$ is non-amenable, then every minimal equicontinuous action of $G$ on the Cantor set with a free orbit has a minimal almost 1-1 extension with no invariant measures. This result  answers the question posed by Veech in  \cite[Page 823]{V77} about the existence of almost automorphic systems without invariant measures, which was previously  announced by Furstenberg and Weiss in \cite{FW89}, and recently shown  by Tsankov \cite{T24}, using different arguments.


Our first result is stated in the following theorem.

\begin{theo}\label{theorem}  Let $G$ be an infinite residually finite group. Let $(Z,G)$ be a minimal equicontinuous system having a free orbit and such that $Z$ is a Cantor set. Let $(Y,G)$ be a minimal   extension of $(Z,G)$ through a topological factor map $\pi:Y\to Z$. Then there exist a minimal    extension $(X,G)$ of $(Z,G)$ through an almost 1-1 factor map $\tau:X\to Z$,  and a Borel equivariant map $\psi:Y\to X$, satisfying the following:

\begin{enumerate}
\item  $\tau\circ\psi=\pi.$
\item $\psi$ is one-to-one on a set $\pi^{-1}(\tilde{Z})$, where $\tilde{Z}\subseteq Z$ is a full-measure set with respect to the unique invariant probability measure in $(Z,G)$.
\item  The map $\psi$ induces  an affine  bijection 
between $\M(Y,G)$ and $\M(X,G)$, given by $\psi^*(\mu)(A)=\mu(\psi^{-1}(A))$, for every $\mu\in \M(Y,G)$ and every Borel set $A\subseteq X$.
\item If in addition, $Y$ is a Cantor set, then $X$ is also a Cantor set and the map $\psi^*$ is an affine homeomorphism between $\M(Y,G)$ and $\M(X,G)$. 
\end{enumerate}  
\end{theo}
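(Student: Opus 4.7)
The plan begins by exploiting that $(Z,G)$ is a minimal equicontinuous Cantor system with a free orbit and $G$ is residually finite, to represent $(Z,G)$ as a $G$-odometer $Z = \varprojlim G/\Gamma_n$, where $\Gamma_n \trianglelefteq G$ is a descending chain of finite-index subgroups with $\bigcap_n \Gamma_n = \{e\}$. The natural projections $Z \to G/\Gamma_n$ yield a refining sequence of clopen partitions $\P_n$ of $Z$, each $\P_n$ permuted transitively by $G$ via the quotient action on $G/\Gamma_n$, and with $\bigvee_n \P_n$ generating the topology of $Z$. Pulling back through $\pi$, I obtain compatible clopen partitions $\pi^{-1}(\P_n)$ of $Y$ with the analogous equivariance.

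\textbf{Step 2: Construction of $X$ and $\psi$.} Fix a base point $z_0 \in Z$ with free orbit and let $\lambda$ denote the unique invariant probability measure on $Z$. The idea is to encode, for each $y \in Y$, its $\pi$-image together with a coherent sequence of finite-level ``local coordinates'' in $Y$ relative to the partitions $\pi^{-1}(\P_n)$. At each level $n$, choose a Borel cross-section $s_n : Z \to Y$ of $\pi$ that is compatible with the equivariance of $\P_n$ (clopen on large portions if $Y$ is Cantor), and use it to define a map $Y \to Z \times K_n$ where $K_n$ is a compact symbolic space recording the finite-level information. Passing to the inverse limit — or, equivalently, taking the orbit closure of a carefully chosen base configuration in an appropriate product space — produces a compact metric $G$-space $X$ together with a continuous factor map $\tau : X \to Z$ and a Borel equivariant map $\psi : Y \to X$ satisfying $\tau \circ \psi = \pi$. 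Minimality of $(X,G)$ follows from minimality of $(Z,G)$ and the construction of $\psi$ from orbit closures; that $\tau$ is almost 1-1 comes from the fact that the discontinuities of the sections $s_n$ can be concentrated in a meager $\lambda$-null set $Z \setminus \tilde Z$. In the Cantor case, the construction naturally realizes $X$ as a subshift of a symbolic space, so $X$ is totally disconnected.

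\textbf{Step 3: Affine bijection on measures.} Injectivity of $\psi^* : \M(Y,G) \to \M(X,G)$ follows from (1) and (2): any $\mu \in \M(Y,G)$ pushes down to $\lambda$ on $Z$, hence is supported on $\pi^{-1}(\tilde Z)$, where $\psi$ is a Borel bijection onto $\tau^{-1}(\tilde Z)$. For surjectivity, let $\nu \in \M(X,G)$; by unique ergodicity of $(Z,G)$, $\tau_*\nu = \lambda$, so $\nu$ is concentrated on $\tau^{-1}(\tilde Z)$, on which $\psi$ has a Borel inverse, and pulling back gives the required preimage in $\M(Y,G)$. Affinity of $\psi^*$ is automatic, and when $Y$ is a Cantor set both $\M(Y,G)$ and $\M(X,G)$ are Choquet simplices in compact metrizable weak-$*$ topology, so the continuous affine bijection $\psi^*$ is a homeomorphism.

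\textbf{Main obstacle.} The delicate point is coordinating Steps 2 and 3 so that the full-measure set $\tilde Z$ on which $\tau$ is injective is simultaneously full-measure for every $\nu \in \M(X,G)$ — not just for measures pushed forward from $Y$. Unique ergodicity of $(Z,G)$ reduces this to controlling $\tau_*\nu$, which forces it to equal $\lambda$, but one still has to build the sections $s_n$ and the symbolic encoding carefully enough that the meager set of ``bad'' points in $Z$ above which $\tau$ fails to be injective is truly null, and that $\psi$ restricted to $\pi^{-1}(\tilde Z)$ is a Borel isomorphism with its image in $\tau^{-1}(\tilde Z)$. This is where the residual finiteness of $G$ and the tower structure of the $G$-odometer are used most crucially, providing the combinatorial room to arrange the cross-sections in a coherent, measure-preserving way.
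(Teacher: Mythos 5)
Your Steps 1--2 follow the same Furstenberg--Weiss outline as the paper: represent $(Z,G)$ as a $G$-odometer, code each $y\in Y$ symbolically over the tower structure, take an orbit closure to get $X$ with $\tau\circ\psi=\pi$, and show minimality and almost 1-1-ness of $\tau$ from a distinguished fiber. Two caveats already here: you cannot take the $\Gamma_n$ normal (the hypothesis is only a free \emph{orbit}, so the odometer is in general built from non-normal finite-index subgroups, and the paper works with coset-representative sets $D_n$ satisfying the quantitative properties (P1)--(P7) of Proposition \ref{suitable-sequence}, which are what actually drive the proof); and your description of the coding via Borel cross-sections $s_n$ is too vague to substantiate either $\psi(Y)\subseteq X$ or the almost 1-1-ness of $\tau$, which in the paper comes from the singleton fiber over $\pi(y_0)$ (a point of the \emph{null} set $Z_0$), not from ``concentrating discontinuities of the sections in a null set.''

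The genuine gap is in Step 3, and the picture it rests on is internally inconsistent. You claim that $\tau$ is injective over a full-measure set $\tilde Z$ and that $\psi$ maps $\pi^{-1}(\tilde Z)$ Borel-bijectively onto $\tau^{-1}(\tilde Z)$. But if $\tau$ were injective over $\tilde Z$, then $\tau\circ\psi=\pi$ would force $\psi$ to collapse each fiber $\pi^{-1}\{z\}$, $z\in\tilde Z$, to a single point, contradicting injectivity of $\psi$ there unless $\pi$ is itself almost everywhere 1-1 --- false for a general minimal extension $(Y,G)$. In the actual construction the roles are reversed: $\tau$ is 1-1 only over a $\nu$-null set, while over the full-measure set $Z\setminus Z_0$ the $\tau$-fibers are large and contain ``mixed'' configurations that do \emph{not} lie in $\psi(Y)$. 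Consequently, surjectivity of $\psi^*$ does not follow from unique ergodicity of $(Z,G)$ together with properties (1)--(2); the introduction of the paper gives an explicit counterexample showing (1)--(2) alone are insufficient. What must be proved --- and what occupies Lemma \ref{convergencia} and Proposition \ref{supported}, using (P5)--(P7) --- is that every $\mu\in\M(X,G)$ assigns zero mass to $X\setminus\psi(Y)$; your proposal contains no argument for this. Likewise, the continuity of $\psi^*$ needed for item (4) is not automatic: $\psi$ is only Borel, and an affine bijection between compact metrizable Choquet simplices need not be continuous, so the homeomorphism claim requires an argument such as Proposition \ref{continuity}, which proves continuity on $(\pi^*)^{-1}\{\nu\}$ using the estimate $|D_{m-1}|/|D_m|<2^{-(m+1)}$.
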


 In the context of Theorem \ref{theorem}, points (1) and (2)  do not  guarantee that $\psi^*$ is a bijection between $\M(X,G)$ and $\M(Y,G)$. Indeed, suppose that $(X,G)$ is a symbolic almost 1-1 extension of $(Z,G)$ having two different ergodic measures $\mu_1$ and $\mu_2$ with the property that the almost 1-1 factor map $\tau: X\to Z$ is a measure-theoretic conjugacy between $(X,G,\mu_i)$ and $(Z,G,\nu)$, for $i=1,2$ (such examples exist, see for example, \cite{CCG23} or \cite{Wi84}).
 Taking  $Y=Z$, $\pi=id$ and $\psi=(\tau|_{S_1})^{-1}$, with $S_1\subseteq X$ a $\mu_1$-full measure set such that the restriction of $\tau$ to $S_1$ makes  a measure-theoretic conjugacy between $(X,G,\mu_i)$ and $(Z,G,\nu)$, we get   conditions (1) and (2) of Theorem \ref{theorem}. Nevertheless,  $\psi^*$ is not surjective because  $\mu_2$ (and any other invariant measure $\mu\neq \mu_1$) has not pre-image by $\psi^*$.    Section \ref{Fibers} is devoted to show that in our construction, $\psi^*$ is a bijection between the spaces of invariant probability measures, and a homeomorphism when the phase spaces are Cantor sets. It is important to note that Theorem \ref{theorem} is valid for a more restricted class of systems than those in \cite{FW89} and \cite{W12}. 

\bigskip

  As an application of Theorem \ref{theorem} we get the following result.

\begin{theo}\label{Main-Toeplitz-measures}
Let $G$ be a non-amenable residually finite group. Then for every  minimal equicontinuous Cantor system $(Z,G)$ having a free orbit,   there  exists a Toeplitz $G$-subshift  with no invariant probability measures, whose maximal equicontinuous factor is  $(Z,G)$.
\end{theo}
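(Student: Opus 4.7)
The plan is to construct a minimal extension $(Y,G)$ of $(Z,G)$ with $\M(Y,G)=\emptyset$ and feed it into Theorem \ref{theorem}.

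Since $G$ is countable and non-amenable, there exists a minimal $G$-subshift $(W,G)$ over a finite alphabet with $\M(W,G)=\emptyset$: non-amenability yields a compact $G$-space without an invariant probability measure, one extracts a minimal subsystem by Zorn's lemma, and any such free minimal Cantor action can be recoded as a subshift via a clopen partition. Consider the diagonal action of $G$ on the Cantor set $Z\times W$ and pick a minimal subsystem $Y\subseteq Z\times W$. The first-coordinate projection restricts to a $G$-equivariant continuous surjection $\pi:Y\to Z$ (surjectivity by minimality of $Z$), so $\pi$ is a topological factor map. Any $\mu\in\M(Y,G)$ would push forward under the second coordinate to an invariant probability measure on $W$, contradicting $\M(W,G)=\emptyset$; hence $\M(Y,G)=\emptyset$. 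After coding the equicontinuous Cantor factor $Z$ symbolically (always possible, given a free orbit), $Y$ can itself be viewed as a subshift.

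Applying Theorem \ref{theorem} to $\pi:Y\to Z$ produces a minimal almost 1-1 extension $\tau:X\to Z$ and an affine bijection $\psi^*:\M(Y,G)\to\M(X,G)$, forcing $\M(X,G)=\emptyset$. Because the construction of $X$ in the proof of Theorem \ref{theorem} is carried out symbolically from $Y$, the system $X$ is a subshift over a finite alphabet. As a minimal subshift which is an almost 1-1 extension of the minimal equicontinuous Cantor system $(Z,G)$, $(X,G)$ is a Toeplitz $G$-subshift. To confirm that $(Z,G)$ is the \emph{maximal} equicontinuous factor of $X$ (and not merely an equicontinuous factor), let $Z'$ denote this maximal factor, so $\tau=\eta\circ q$ with $q:X\to Z'$ the universal map and $\eta:Z'\to Z$ a factor map between minimal equicontinuous Cantor $G$-systems (hence a covering of compact homogeneous $G$-spaces). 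Choosing $z\in Z$ with $|\tau^{-1}(z)|=1$ forces $|\eta^{-1}(z)|\le 1$, so $\eta$ has degree one and $Z'\cong Z$.

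The main obstacle is the first step, namely exhibiting the minimal subshift $(W,G)$ without invariant probability measures: this is the substantive place where non-amenability is used, and it requires invoking a (by now standard) existence theorem for free minimal Cantor actions of countable non-amenable groups with no invariant probability measure. The remaining steps are routine manipulations built directly on Theorem \ref{theorem} and on the universal property of the maximal equicontinuous factor.
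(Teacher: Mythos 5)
Your first half is essentially the paper's setup (take a minimal Cantor system without invariant measures, pass to a minimal subsystem of its product with $Z$, and feed the resulting extension $\pi:Y\to Z$ into Theorem \ref{theorem}), but the decisive step of your argument contains a genuine gap. You claim that the almost 1-1 extension $(X,G)$ produced by Theorem \ref{theorem} ``is carried out symbolically from $Y$'' and is therefore a subshift over a finite alphabet. It is not: in the construction, $X$ is the orbit closure of $\psi(y_0)$ inside $Z\times Y^G$, where the ``alphabet'' is the whole Cantor space $Y$, and the statement of Theorem \ref{theorem} only guarantees that $X$ is a Cantor system, not an expansive one. (Indeed an infinite minimal equicontinuous system such as $Z$ is never conjugate to a subshift, so your auxiliary claims that $Z$, and hence $Y\subseteq Z\times W$, ``can be viewed as a subshift'' are also false as stated; recoding by a clopen partition yields only a factor, and a factor of a system with no invariant measures may well acquire invariant measures.) Consequently you cannot invoke the characterization of Toeplitz subshifts at this point: what you have is a Cantor almost 1-1 extension of $(Z,G)$ with $\M(X,G)=\emptyset$, which is exactly where the paper's Theorem \ref{theorem} leaves off, not a Toeplitz subshift.

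Bridging this gap is precisely the content of Section \ref{General-case} of the paper, and it is not routine. One must (i) extract from the measure-free Cantor system a \emph{symbolic} factor that still has no invariant measures, which requires choosing the clopen partition carefully (Lemma \ref{subshift-factor}; an arbitrary symbolic factor could carry invariant measures), and (ii) restore the almost 1-1 property over $Z$, which the symbolic factor alone need not have. The paper does this by joining with a Toeplitz almost 1-1 extension of the odometer $(Z,G)$ (Proposition \ref{extension-Toeplitz}) through Lemmas \ref{symbolic-factor} and \ref{sub-ext-1-1}, obtaining a minimal subshift that is simultaneously measure-free and an almost 1-1 extension of $(Z,G)$; only then does Proposition \ref{1-1-extension} identify it as Toeplitz with maximal equicontinuous factor $(Z,G)$ (your degree-one argument for maximality is fine in spirit, but it addresses the easy part). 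To repair your proposal you would need to reproduce, or cite, this entire chain; as written, the jump from the Cantor extension $X$ to a Toeplitz subshift is unjustified.
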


Theorem \ref{Main-Toeplitz-measures} shows that the class of Toeplitz subshifts is a test for amenability for residually finite groups. See  \cite{FJ21,GH97,Sew14}  for more examples of families of dynamical systems which are test for amenability for countable groups. 

If $G$ is amenable and residually finite, then every metrizable Choquet simplex is realized as the space of invariant probability measures of a Toeplitz $G$-subshift (see \cite{CP14, Do91}). Theorem \ref{Main-Toeplitz-measures} can be interpreted as saying that when $G$ is a non-amenable residually finite group, then the empty simplex is realized among the family of Toeplitz $G$-subshifts.

As a corollary of Theorem \ref{Main-Toeplitz-measures}, we get the following characterization of amenability of re\-si\-dua\-lly finite groups:

\begin{coro}\label{corollary}
Let $G$ be an infinite residually finite countable  group. The following statements are equivalent:
\begin{enumerate}
\item $G$ is amenable.
\item Every almost automorphic topological dynamical system $(X,G)$ has invariant pro\-ba\-bi\-li\-ty measures.
\item Every almost automorphic topological dynamical system $(X,G)$, whose maximal equicontinuous factor is a Cantor system, has invariant probability measures.
\item Every Toeplitz $G$-subshift has invariant probability measures.

\item There exists a minimal equicontinuous Cantor system  $(Z,G)$ with a free orbit, such that every symbolic almost one-to-one extension of $(Z,G)$ has invariant probability measures.
\end{enumerate}

\end{coro}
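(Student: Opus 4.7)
I will prove the chain of implications $(1)\Rightarrow(2)\Rightarrow(3)\Rightarrow(4)\Rightarrow(5)\Rightarrow(1)$. The substantive step, $(5)\Rightarrow(1)$, is an immediate contrapositive application of Theorem \ref{Main-Toeplitz-measures}; the remaining four implications reduce to the standard fixed-point property of amenable groups and to unpacking definitions.

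For $(1)\Rightarrow(2)$ I would invoke the classical Bogolyubov--Day theorem: any continuous action of a countable amenable group $G$ on a nonempty compact metric space admits a $G$-invariant Borel probability measure (obtainable, for instance, by a weak-$*$ limit of empirical averages along a F\o lner sequence). Almost automorphic systems, being in particular actions on compact metric spaces, are covered. The implications $(2)\Rightarrow(3)$ and $(3)\Rightarrow(4)$ are then syntactic restrictions: every Toeplitz $G$-subshift is by definition a minimal almost 1-1 extension of an equicontinuous Cantor system (the associated odometer), hence is almost automorphic with Cantor maximal equicontinuous factor, so condition (3) applies to it and gives (4).

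For $(4)\Rightarrow(5)$ I need to produce a concrete witness of the ``there exists'' in (5). Since $G$ is infinite and residually finite, choose a decreasing sequence $(G_n)_{n\geq 0}$ of finite-index normal subgroups of $G$ with $\bigcap_n G_n=\{1\}$, and form the $G$-odometer $(Z,G)$ with $Z=\varprojlim G/G_n$. Then $(Z,G)$ is a minimal equicontinuous action on a Cantor set and, because the intersection of the $G_n$ is trivial, the action is free (so in particular has a free orbit). A symbolic almost one-to-one extension of $(Z,G)$ is by the very definition a Toeplitz $G$-subshift whose maximal equicontinuous factor is $(Z,G)$, so (4) applied to it gives an invariant probability measure, establishing (5) with this choice of $Z$.

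Finally for $(5)\Rightarrow(1)$ I argue by contrapositive. Suppose $G$ is non-amenable, and let $(Z,G)$ be any minimal equicontinuous Cantor system with a free orbit (for instance the witness in (5)). By Theorem \ref{Main-Toeplitz-measures} there exists a Toeplitz $G$-subshift $(X,G)$ with no invariant probability measure whose maximal equicontinuous factor is $(Z,G)$; in particular $(X,G)$ is a symbolic almost one-to-one extension of $(Z,G)$, contradicting (5). The only potentially non-trivial step in this scheme is this last one, and its entire content is already encapsulated in Theorem \ref{Main-Toeplitz-measures}; the other implications are formal, so I expect no further obstacle.
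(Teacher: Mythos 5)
Your proposal is correct and follows essentially the same route as the paper: the implications $(1)\Rightarrow(2)\Rightarrow(3)\Rightarrow(4)\Rightarrow(5)$ are treated as direct (with Proposition \ref{1-1-extension} and residual finiteness supplying the witness odometer), and the only substantive step is an application of Theorem \ref{Main-Toeplitz-measures}. The only difference is in how the cycle is closed: you prove $(5)\Rightarrow(1)$ by contrapositive, whereas the paper invokes Theorem \ref{Main-Toeplitz-measures} together with Proposition \ref{group-characterization} for $(4)\Rightarrow(1)$; your arrangement makes the return implication from $(5)$ explicit, which is if anything slightly more complete than the paper's one-line proof.
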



 This article is organized as follows: in Section \ref{definitions} we recall basic concepts as odometers, Toeplitz subshifts, equicontinuous systems and residually finite groups. In Section \ref{Good-subsequences} we show that if $(Z,G)$ is a minimal equicontinuous Cantor system with a free orbit, conjugate to the odometer associated to a decreasing sequence $(\Gamma_n)_{n\in\NN}$ of finite index subgroups of $G$ with trivial intersection, then there exists an increasing sequence $(D_n)_{n\in\NN}$ of finite subsets of $G$ satisfying several properties, including that every $D_n$ is a set of right coset representatives of $G/\Gamma_n$.  
 In Section \ref{Section-borel-map}  we prove Theorem \ref{theorem}, and in Section \ref{General-case}, we prove Theorem \ref{Main-Toeplitz-measures} and Corollary \ref{corollary}.

\subsection*{Acknowledgements} The authors warmly thank Sebasti\'an Barbieri, Raimundo Brice\~no, Paulina Cecchi, Sebasti\'an Donoso, Tomasz Downarowicz, Matthieu Joseph, Patricio Pérez, Samuel Petite,  and Todor Tsankov for fruitful discussions.

\section{Definitions and background.}\label{definitions}

A {\it topological dynamical system} is a pair $(X,G)$ where $X$ is a compact metric space and $G$ is a countable discrete group acting continuously on $X$. We call $X$ the {\it phase space} of the system. For every $x\in X$ and $g\in G$, we denote $gx$ the action of $g$ on $x$. The {\it stabilizer} of $x\in X$ is the subgroup $\Stab(x)=\{g\in G: gx=x\}$. The dynamical system $(X,G)$ is  {\it free} if $\Stab(x)=\{1_G\}$, for every $x\in X$. We say that $(X,G)$ {\it has a free orbit}, if there exists $x\in X$ such that $\Stab(x)=\{1_G\}$. A subset $A\subseteq X$ is $G$-{\it invariant} or just {\it invariant}, if $gA=A$ for every $g\in G$, where $gA=\{gx: x\in A\}$.  The topological dynamical system $(X,G)$ is {\it minimal} if for every $x\in X$ its orbit $O_G(x)=\{gx: g\in G\}$ is dense in $X$.  A {\it minimal component} of $(X,G)$ is a closed non-empty $G$-invariant subset $Y$ of $X$ such that $(Y,G)$ is minimal. It is well known that every topological dynamical system has minimal components (see for example \cite{Au88}). The topological dynamical system $(X,G)$ is {\it equicontinuous} if  for every $\varepsilon>0$ there exists $\delta>0$, such that if $d(x,y)<\delta$ then $d(gx,gy)<\varepsilon$, for every $x,y\in X$ and $g\in G$.

Let $(X,G)$ and $(Y,G)$ be two topological dynamical systems. We say that a map $\pi:X\to Y$ is {\it equivariant} if $\pi(gx)=g\pi(x)$ for every $g\in G$ and $x\in X$. The equivariant map $\pi$ is a {\it factor map} if it is also continuous and  surjective. If $\pi:X\to Y$ is a factor map, we say that $(X,G)$ is a {\it topological extension} or an extension of $(Y,G)$, and $(Y,G)$ is a {\it topological factor} or a factor of $(X,G)$. If $\pi$ is bijective, we say that $(X,G)$ and $(Y,G)$ are {\it conjugate}. The factor map $\pi:X\to Y$ is {\it almost one-to-one} if the set $\{y \in Y: |\pi^{-1}\{\pi(y)\}|=1\}$ is dense in $Y$.  

Every topological dynamical system $(X,G)$ has a unique (up to conjugacy) {\it maximal equicontinuous factor}, that is, a factor $(Y,G)$ which is equicontinuous, and such that every other factor from $(X,G)$ to an equicontinuous system, passes through the factor from $(X,G)$ to $(Y,G)$. A system $(X,G)$ is {\it almost automorphic} if its maximal equicontinuous factor is minimal, and if the associated factor map is almost one-to-one.

\begin{lemma}\label{minimality}
Let $(Y,G)$ be a minimal topological dynamical system. Let $y_0\in Y$.  For every $\varepsilon>0$ there exists a finite set $F\subseteq G$, such that for every $y\in Y$ and for every $g\in G$, there exists $h\in gF$ such that $d(h^{-1}y_0, y)<\varepsilon$.
\end{lemma}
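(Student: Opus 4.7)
The plan is to reduce the problem to the following statement: there exists a finite $F\subseteq G$ such that for every $z\in Y$, the finite family $\{f^{-1}z:f\in F\}$ is $\varepsilon$-dense in $Y$. Once this is established, given any $g\in G$ and any $y\in Y$, applying the uniform $\varepsilon$-density to $z=g^{-1}y_0$ yields $f\in F$ with $d(f^{-1}g^{-1}y_0,y)<\varepsilon$, i.e., $h=gf\in gF$ with $d(h^{-1}y_0,y)<\varepsilon$, as desired.

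To prove the uniform statement, I would first use minimality of $(Y,G)$. For a fixed $z\in Y$ the orbit $\{f^{-1}z:f\in G\}$ coincides with $\{gz:g\in G\}$ and is therefore dense in $Y$. By compactness of $Y$, I can extract a finite subset $F_z\subseteq G$ such that the balls $B(f^{-1}z,\varepsilon/2)$, $f\in F_z$, already cover $Y$.

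The next step, which is the core of the argument, is to promote this pointwise density into a locally uniform one. Each $f\in F_z$ acts on $Y$ as a homeomorphism, so by joint uniform continuity on the finite family $\{f^{-1}:f\in F_z\}$, there is an open neighborhood $U_z\ni z$ such that $d(f^{-1}z,f^{-1}z')<\varepsilon/2$ for every $z'\in U_z$ and every $f\in F_z$. The triangle inequality then yields that the balls $B(f^{-1}z',\varepsilon)$, $f\in F_z$, cover $Y$ for every $z'\in U_z$.

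Finally, since $\{U_z\}_{z\in Y}$ is an open cover of the compact space $Y$, I can extract a finite subcover $U_{z_1},\dots,U_{z_m}$ and set
\[
F=F_{z_1}\cup\cdots\cup F_{z_m},
\]
which is a finite subset of $G$. For any $z\in Y$ one has $z\in U_{z_j}$ for some $j$, whence $\{f^{-1}z:f\in F_{z_j}\}$ is $\varepsilon$-dense and \emph{a fortiori} so is $\{f^{-1}z:f\in F\}$. Applying this to $z=g^{-1}y_0$ for arbitrary $g\in G$ yields the lemma. The only delicate point is the passage from pointwise to uniform $\varepsilon$-density, which is precisely where continuity of the action and compactness of $Y$ are both used; minimality alone is not enough, and the whole argument would fail without the finiteness of $F_z$ allowing a single neighborhood $U_z$ to work simultaneously for all $f\in F_z$.
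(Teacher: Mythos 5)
Your proof is correct, but it takes a different route from the paper. You prove the lemma directly: first a pointwise $\varepsilon/2$-dense finite set of translates $F_z$ for each $z$ (minimality plus compactness), then you upgrade to a neighborhood $U_z$ of $z$ using continuity of the finitely many maps $y\mapsto f^{-1}y$, $f\in F_z$, and finally you take a finite subcover of $\{U_z\}$ and union the corresponding $F_z$'s. The paper instead argues by contradiction: assuming the conclusion fails for some $\varepsilon$, it picks an exhaustion $(F_n)$ of $G$, witnesses $g_n$, $y_n$ with $d(h^{-1}g_n^{-1}y_0,y_n)\geq\varepsilon$ for all $h\in g_nF_n$, passes to convergent subsequences $g_n^{-1}y_0\to x$, $y_n\to y$, and derives $d(h^{-1}x,y)\geq\varepsilon$ for every $h\in G$, contradicting minimality. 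Both arguments rest on the same ingredients (compactness of $Y$, continuity of the action of each group element, density of orbits), but yours is a constructive open-cover argument that actually yields the stronger uniform statement that a single $F$ makes $\{f^{-1}z:f\in F\}$ $\varepsilon$-dense for \emph{every} $z\in Y$, not only for points on the orbit of $y_0$; the paper's sequential-compactness argument is shorter and avoids the bookkeeping with the neighborhoods $U_z$, at the cost of being nonconstructive. Your closing remark is also apt: the finiteness of $F_z$ is exactly what lets one neighborhood $U_z$ serve all $f\in F_z$ simultaneously, and this is the point where a naive pointwise argument would break down.
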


\begin{proof}
Suppose this is not true. Then there exists $\varepsilon>0$, such that for every finite set $F\subseteq G$, there exist $y\in Y$ and $g\in G$, such that for every $h\in Fg$ we have $d(h^{-1}y_0,y)\geq \varepsilon$.  Let $(F_n)_{n\in \NN}$ be an increasing sequence of finite subsets of $G$ such that $1_G\in F_1$ and  whose union is equal to $G$. Let $g_n\in G$ and $y_n\in Y$ be such that for every $h\in g_nF_n$ we have $d(h^{-1}y_0,y_n)\geq \varepsilon$.

After taking subsequences if necessary, we can assume that $(g_n^{-1}y_0)_{n\in\NN}$ and $(y_n)_{n\in\NN}$ converge to $x\in Y$ and $y\in Y$ respectively. Let $h\in G$. Let $n_0\in \NN$ be such that $h\in F_n$, for every $n\geq n_0$. We have $g_nh\in g_nF_n$, for every $n\geq n_0$. Then
$$
d(h^{-1}g_n^{-1}y_0, y_n)\geq \varepsilon, \mbox{ for every } n\geq n_0.
$$
From this we get
$$
\varepsilon \leq d(h^{-1}g_n^{-1}y_0, y_n)\leq d(h^{-1}g_n^{-1}y_0, h^{-1}x)+d(h^{-1}x,y)+d(y, y_n), 
$$
 for every  $n\geq n_0$, which implies that
$$
\varepsilon\leq d(h^{-1}x, y) \mbox{ for every } h\in G,
$$
which is not possible, because $(Y,G)$ is minimal.
\end{proof}

\subsection{Invariant measures} Let $(X,G)$ be a topological dynamical system. We denote by $\M(X)$ the space of Borel probability measures of $X$. This space is convex, compact, and metrizable   with respect to the weak topology (see \cite{DGS}, for details). An {\it invariant probability measure } of   $(X,G)$ is a measure $\mu\in \M(X)$ such that $\mu(gA)=\mu(A)$, for every $g\in G$ and every Borel set $A\subseteq X$. We denote $\M(X,G)$ the space of invariant probability measures of $(X,G)$.  

It is well known that the group $G$ is {\it amenable} if and only if for every topological dynamical system $(X,G)$ the space $\M(X,G)$ is non-empty (see, for example,  \cite{KL16}). 

 The next result is classical in ergodic theory  (see, for example,  \cite{DGS}). 
 
 \begin{lemma}\label{sobre} 
Let  $(X,G)$ and $(Y, G)$ be  topological dynamical systems.    If  $\pi: X\to Y$  is a factor map, then the function $\pi^*:\M(X)\to \M(Y)$ defined as
$$
\pi^*\mu(A)=\mu(\pi^{-1}(A)), \mbox{ for every  } \mu\in\M(X) \mbox{ and every Borel set }  A\subseteq Y, 
$$ 
is affine, continuous and surjective.  
\end{lemma}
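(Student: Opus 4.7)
My plan is to verify the three properties (affine, continuous, surjective) in that order, since surjectivity will rely on the first two.

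First I would check that $\pi^*$ is well-defined: since $\pi$ is continuous, $\pi^{-1}(A)$ is Borel whenever $A$ is, so $\pi^*\mu$ makes sense, and $\pi^*\mu(Y)=\mu(\pi^{-1}(Y))=\mu(X)=1$ shows it is a probability measure. Affineness is immediate from the definition: for $\mu_1,\mu_2\in\M(X)$, $t\in[0,1]$, and $A\subseteq Y$ Borel,
\[
\pi^*(t\mu_1+(1-t)\mu_2)(A)=(t\mu_1+(1-t)\mu_2)(\pi^{-1}(A))=t\pi^*\mu_1(A)+(1-t)\pi^*\mu_2(A).
\]

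For continuity I would use the characterization of the weak topology via continuous test functions. Given $f\in C(Y)$, the composition $f\circ\pi$ lies in $C(X)$ because $\pi$ is continuous, and by the change-of-variables identity
\[
\int_Y f\,d\pi^*\mu=\int_X f\circ\pi\,d\mu\qquad(\mu\in\M(X)),
\]
which is standard (prove it on indicators, extend by linearity and monotone convergence). Hence if $\mu_n\to\mu$ weakly in $\M(X)$, then $\int f\,d\pi^*\mu_n\to\int f\,d\pi^*\mu$ for every $f\in C(Y)$, i.e.\ $\pi^*\mu_n\to\pi^*\mu$ weakly.

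For surjectivity, the key observation is that point masses lie in the image: since $\pi$ is surjective, every $y\in Y$ has some $x\in\pi^{-1}(y)$, and a direct check gives $\pi^*\delta_x=\delta_y$. The image $\pi^*(\M(X))$ is convex (by affineness) and compact (as the continuous image of the compact set $\M(X)$), hence closed. In particular, $\pi^*(\M(X))$ contains the closed convex hull of $\{\delta_y:y\in Y\}$. Since finitely supported probability measures are weakly dense in $\M(Y)$, this closed convex hull equals $\M(Y)$, and surjectivity follows.

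The main obstacle I anticipate is just making sure the surjectivity argument is clean: one must be confident that (a) the image is actually closed (compactness of $\M(X)$ in the metrizable weak topology is exactly what we need), and (b) the convex hull of point masses is weakly dense in $\M(Y)$, which ultimately rests on the separability of $C(Y)$ and the Riesz representation theorem. If one prefers to avoid the density argument, an alternative is to extend the positive functional $f\mapsto\int f\,d\nu$ on the subspace $\{f\circ\pi:f\in C(Y)\}\subseteq C(X)$ by Hahn--Banach (using that $\pi$ surjective makes $f\mapsto f\circ\pi$ isometric) and apply Riesz to produce the preimage measure.
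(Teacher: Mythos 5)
Your proof is correct, and it is the standard argument: affineness is immediate from the definition, continuity follows from the change-of-variables identity $\int_Y f\,d\pi^*\mu=\int_X f\circ\pi\,d\mu$, and surjectivity from the facts that $\pi^*(\M(X))$ is a compact convex set containing every $\delta_y$ (using surjectivity of $\pi$) together with the weak density of finitely supported measures in $\M(Y)$. The paper does not prove this lemma at all -- it is stated as classical with a reference to Denker--Grillenberger--Sigmund -- so there is nothing to compare against; your argument (either the closed-convex-hull route or the Hahn--Banach alternative you sketch) is exactly the kind of proof found in that reference and fills the citation correctly.
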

In the context of Lemma \ref{sobre}, if the factor $(Y,G)$ has no invariant probability measures, then neither does the extension $(X,G)$. In other words, extensions of topological dynamical systems with no invariant measures, have no invariant measures.

\subsection{Cantor systems.}

We say that the topological dynamical system $(X,G)$ is a {\it Cantor system} if $X$ is a Cantor set, i.e., non-empty, compact, metric, totally disconnected and without isolated points. Examples of  Cantor systems are given by the  $G$-subshifts.

\subsubsection{$G$-subshifts.}  Let $\Sigma$ be a finite alphabet, i.e., a finite set with at least two elements. Consider the set $$\Sigma^G=\{x=(x(g))_{g\in G}: x(g)\in \Sigma, \mbox{ for every } g\in G\}.$$
The space $\Sigma^G$ is a Cantor set when we endow $\Sigma$ with the discrete topology and $\Sigma^G$ with the product topology.  
The group $G$ acts continuously on $\Sigma^G$ by means of the {\it shift action}, which is defined as follows: let $x=(x(g))_{g\in G}\in \Sigma^G$ and $h\in G$,
$$
hx(g)=x(h^{-1}g), \mbox{ for every } g\in G.
$$

The Cantor system $(\Sigma^G,G)$ is called the $|\Sigma|$-full $G$-shift, where $|\Sigma|$ denotes the cardinality of $\Sigma$. A {\it subshift} of $\Sigma^G$ is a closed $G$-invariant subset $X\subseteq \Sigma^G$. We  also call subshift the topological dynamical system $(X,G)$ given by the restriction of the shift-action of $G$ on $X$ (see \cite{CC10} for more details on subshifts).  The next lemma shows that minimal infinite subshifts are Cantor systems.

\begin{lemma}\label{Cantor}
Let $(X,G)$ be a minimal topological dynamical system such that $X$ is totally disconnected. If $X$ is infinite, then $X$ has no isolated points.
\end{lemma}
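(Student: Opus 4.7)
The plan is to argue by contradiction: assume $X$ has an isolated point and derive that $X$ must be finite.

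First, suppose $x_0 \in X$ is isolated, so $\{x_0\}$ is open in $X$. Since the $G$-action is by homeomorphisms, $\{gx_0\}$ is open in $X$ for every $g \in G$. The key observation is that minimality of $(X,G)$ forces every orbit to meet every nonempty open set; in particular, for every $y \in X$, the orbit $O_G(y)$ meets $\{x_0\}$, which means there exists $g \in G$ with $gy = x_0$, i.e.\ $y = g^{-1}x_0$. Hence $X = O_G(x_0)$.

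Next, since every singleton $\{gx_0\}$ is open and $X$ is the union of these singletons, $X$ carries the discrete topology. Combined with compactness of $X$, this forces $X$ to be finite, contradicting the hypothesis that $X$ is infinite. Therefore no isolated point can exist.

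The proof is short and does not actually use the total disconnectedness hypothesis; the only ingredients are minimality, the fact that $G$ acts by homeomorphisms, and compactness of $X$. The main (and only) conceptual step is the first one, the observation that an isolated point immediately collapses the whole space onto a single orbit by minimality; once that is noted, the rest is automatic. There is no real obstacle in the argument.
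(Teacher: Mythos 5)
Your proof is correct, and it takes a slightly different route from the paper's. The paper argues from the isolated point $x$ that the set of return times of $x$ to the open set $\{x\}$ is syndetic by minimality, hence $\Stab(x)=\{g\in G: gx=x\}$ is a finite index subgroup, so the orbit of $x$ is finite and therefore (being closed and dense) equals $X$, contradicting infiniteness. You instead use minimality in the form ``every orbit is dense, hence meets the open singleton $\{x_0\}$'' to conclude directly that $X=O_G(x_0)$, then observe that all singletons $\{gx_0\}$ are open because $G$ acts by homeomorphisms, so $X$ is discrete and compactness forces it to be finite. Both arguments are valid and short; yours is the more elementary one, needing only density of orbits and compactness rather than the syndeticity/finite-index machinery (and, like the paper's, it never uses total disconnectedness, which is indeed superfluous for this lemma). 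The paper's route yields the small extra piece of information that the stabilizer of an isolated point has finite index, which is in the spirit of how the lemma is used, but for the statement itself your argument suffices.
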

\begin{proof}
Suppose that $x\in X$ is an isolated point. Then $\{x\}$ is an open set of $X$. Since the system $(X,G)$ is minimal, the set of return times of $x$ to $\{x\}$  is syndetic (see \cite{Au88}), which is equivalent to say that $\{g\in G: gx=x\}$ is a finite index subgroup of $G$. This implies that the orbit of $x$ is finite. Thus if $X$ is infinite, then $X$ has no isolated points.
\end{proof}

\subsubsection{Metric on the space of measures of a Cantor set.}
 
Let $X$ be a Cantor space and let 
$\mathcal{K}=(\mathcal{K}_n)_{n\in\NN}$ be a decreasing sequence of nested clopen partitions of $X$ that spans its topology. Observe that every partition $\mathcal{K}_n$ is finite, and the diameter of the elements in $\mathcal{K}_n$ goes to zero with $n$.
Let define  
$$
\mathcal{S}(\mathcal{K})=\left\{ \sum_{K\in \mathcal{K}_n} a_K\mathbbm{1}_{K}: a_K\in\QQ, \mbox{ for every } K\in \mathcal{K}_n, n\in\NN\right\}.
$$

\begin{lemma}\label{Lemma21}
    For every $\varepsilon>0$ and $f\in C(X)$, there exists $f_\varepsilon\in \mathcal{S}(\mathcal{K})$ such that\\ $\sup_{x\in X}|f(x)-f_\varepsilon(x)|<\varepsilon$.
\end{lemma}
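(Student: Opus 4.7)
The plan is to combine uniform continuity of $f$ with the assumption that the mesh of $\mathcal{K}_n$ shrinks to zero, and then replace real values by rational approximations to land inside $\mathcal{S}(\mathcal{K})$.

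First, since $X$ is a compact metric space and $f \in C(X)$, the function $f$ is uniformly continuous. So I would fix $\delta > 0$ such that $|f(x) - f(y)| < \varepsilon/3$ whenever $d(x,y) < \delta$. Because $(\mathcal{K}_n)_{n \in \NN}$ is a decreasing sequence of clopen partitions whose elements have diameters going to $0$ and which spans the topology of $X$, I can pick $n \in \NN$ such that every $K \in \mathcal{K}_n$ has diameter less than $\delta$.

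Next, for each of the finitely many $K \in \mathcal{K}_n$, I would choose a point $x_K \in K$ and then pick a rational number $a_K \in \QQ$ with $|a_K - f(x_K)| < \varepsilon/3$. Setting
\[
f_\varepsilon = \sum_{K \in \mathcal{K}_n} a_K \mathbbm{1}_K,
\]
we have $f_\varepsilon \in \mathcal{S}(\mathcal{K})$ by definition.

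Finally, for every $x \in X$, there is a unique $K \in \mathcal{K}_n$ with $x \in K$ (since $\mathcal{K}_n$ is a partition), and then
\[
|f(x) - f_\varepsilon(x)| \leq |f(x) - f(x_K)| + |f(x_K) - a_K| < \varepsilon/3 + \varepsilon/3 = 2\varepsilon/3,
\]
because $d(x, x_K) < \delta$ (as both points lie in $K$, which has diameter less than $\delta$). Taking the supremum over $x \in X$ gives $\sup_{x \in X} |f(x) - f_\varepsilon(x)| \leq 2\varepsilon/3 < \varepsilon$. There is no real obstacle here; the only mild care needed is to split $\varepsilon$ into two pieces so that rational approximation of $f(x_K)$ does not destroy the uniform bound coming from uniform continuity.
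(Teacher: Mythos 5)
Your proof is correct and is essentially the same argument as the paper's: uniform continuity of $f$, a partition level $n$ with mesh below $\delta$, and rational approximation of the values $f(x_K)$ on each piece. The only difference is the cosmetic choice of splitting $\varepsilon$ into thirds rather than halves.
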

\begin{proof}
    Let $\varepsilon>0$, $f\in C(X)$ and $\delta>0$ be such that if $d(x,y)<\delta$ then $|f(x)-f(y)|<\frac{\varepsilon}{2}$. Let $n\in\NN$ be such that the elements of $\mathcal{K}_n$ have diameter smaller than $\delta$.
   For every $K\in \mathcal{K}_n$, we choose $x_K\in K$ and $a_K\in\QQ$ such that $|a_K-f(x_K)|<\frac{\varepsilon}{2}$.
   We define $f_\varepsilon=\sum_{K\in\mathcal{K}_n} a_K\mathbbm{1}_{K}$. 
   Since for every $x\in K$ we have $d(x_K,x)<\delta$, we get $|f(x)-a_K|\leq |f(x)-f(x_K)|+|f(x_K)-a_K|<\varepsilon$. 
   This implies $\sup_{x\in X}|f(x)-f_\varepsilon(x)|<\varepsilon$.
\end{proof}
For every $\mu_1$ and $\mu_2$ in $\M(X)$, we define
\begin{align}\label{newmetric}
d(\mu_1,\mu_2)=\sum_{n=1}^{\infty}\sum_{K\in \mathcal{K}_n}\frac{1}{2^ns_n}|\mu_1(K)-\mu_2(K)|, 
\end{align}
where $s_n$ is the number of elements in $\mathcal{K}_n$, for every $n\in\NN$.  It is straightforward to check that $d$ is a metric on $\M(X)$. 
\begin{lemma}\label{Lemma22}
Let $g\in\mathcal{S}(\mathcal{K})$.
For every $\varepsilon>0$, there exists $\delta=\delta(g,\varepsilon)>0$ such that   if $\mu_1,\mu_2\in\mathcal{M}(X)$  verify $d(\mu_1,\mu_2)<\delta$, then  $\left|\int gd\mu_1-\int gd\mu_2\right|<\varepsilon$.
\end{lemma}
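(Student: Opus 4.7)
Since $g \in \mathcal{S}(\mathcal{K})$, by definition there exists some level $n \in \NN$ and rationals $\{a_K : K \in \mathcal{K}_n\}$ such that $g = \sum_{K \in \mathcal{K}_n} a_K \mathbbm{1}_K$. The plan is to reduce the problem of comparing integrals of $g$ to a problem of comparing the values $\mu_i(K)$ on the finitely many atoms $K \in \mathcal{K}_n$, which is precisely what the metric $d$ controls.

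First I would write
$$
\int g \, d\mu_i = \sum_{K \in \mathcal{K}_n} a_K \mu_i(K), \quad i=1,2,
$$
so that by the triangle inequality
$$
\left| \int g \, d\mu_1 - \int g \, d\mu_2 \right| \leq M \sum_{K \in \mathcal{K}_n} |\mu_1(K) - \mu_2(K)|,
$$
where $M = \max_{K \in \mathcal{K}_n} |a_K|$. (If $M = 0$ then $g \equiv 0$ and any $\delta > 0$ works, so I may assume $M > 0$.)

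Next I would compare the right-hand side to $d(\mu_1, \mu_2)$. From the definition \eqref{newmetric}, only keeping the $n$-th term of the outer sum gives
$$
d(\mu_1, \mu_2) \geq \frac{1}{2^n s_n} \sum_{K \in \mathcal{K}_n} |\mu_1(K) - \mu_2(K)|,
$$
since all terms in the series are nonnegative. Combining the two inequalities yields
$$
\left| \int g \, d\mu_1 - \int g \, d\mu_2 \right| \leq M \cdot 2^n s_n \cdot d(\mu_1, \mu_2).
$$

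It then suffices to set $\delta = \delta(g, \varepsilon) := \varepsilon / (M \cdot 2^n s_n)$; then $d(\mu_1, \mu_2) < \delta$ implies the desired bound. There is no real obstacle here: the only thing to notice is that $g$ fixes the level $n$ and hence the constants $s_n$ and $M$, so $\delta$ depends on $g$ (through $n$, $s_n$ and the coefficients $a_K$) and on $\varepsilon$, as claimed in the statement.
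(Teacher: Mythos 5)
Your proof is correct and follows essentially the same approach as the paper: reduce the difference of integrals to the level-$n$ differences $|\mu_1(K)-\mu_2(K)|$, which the $n$-th term of the metric controls up to the factor $2^n s_n$, and choose $\delta$ accordingly. In fact your version is marginally cleaner, since by using $M=\max_K|a_K|$ you avoid the paper's division by $|a_K|$, which would need a side remark when some coefficient vanishes.
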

\begin{proof}
Let $g\in \mathcal{S}(\mathcal{K})$ and $n\in\NN$ be such that $g=\sum_{K\in \mathcal{K}_n} a_K\mathbbm{1}_{K}$, for some $a_K\in \QQ$, for every $K\in \mathcal{K}_n$.
Let $\delta:=\min\{\frac{\varepsilon}{2^{n}|a_K|s_n^2}\colon K\in \mathcal{K}_n\}$. Thus,  if $\mu_1$ and $\mu_2$ are two measures verifying $d(\mu_1,\mu_2)<\delta$, then
$$\frac{1}{2^{n}s_n}|\mu_1(K)-\mu_2(K)|<d(\mu_1,\mu_2)<\frac{\varepsilon}{2^{n}|a_K|s_n^2}\mbox{ for every } K\in \mathcal{K}_n.$$
This implies
\begin{align*}
        \left|\int gd\mu_1-\int gd\mu_2\right|\leq\sum_{K\in \mathcal{K}_n}|a_K||\mu_1(K)-\mu_2(K)|\leq\sum_{K\in \mathcal{K}_n}|a_K|\frac{\varepsilon}{|a_K|s_n}\leq \varepsilon.
    \end{align*}  
\end{proof}

\begin{coro}\label{Coro23}
Let $f\in C(X)$.
For every $\varepsilon>0$, there exists $\delta=\delta(f,\varepsilon)>0$ such that if  $\mu_1,\mu_2\in\mathcal{M}(X)$  satisfy $d(\mu_1,\mu_2)<\delta$, then 
 $\left|\int fd\mu_1-\int fd\mu_2\right|<\varepsilon$.
\end{coro}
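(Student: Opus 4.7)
The plan is to deduce Corollary \ref{Coro23} from Lemma \ref{Lemma21} and Lemma \ref{Lemma22} by a standard three-epsilon argument. Given $f\in C(X)$ and $\varepsilon>0$, first I would invoke Lemma \ref{Lemma21} to produce a simple function $g=f_{\varepsilon/3}\in \mathcal{S}(\mathcal{K})$ with
\[
\sup_{x\in X}|f(x)-g(x)|<\frac{\varepsilon}{3}.
\]
Next, I would apply Lemma \ref{Lemma22} to this particular $g$ with tolerance $\varepsilon/3$ to obtain a $\delta=\delta(g,\varepsilon/3)>0$ such that whenever $d(\mu_1,\mu_2)<\delta$, we have $\left|\int g\,d\mu_1-\int g\,d\mu_2\right|<\varepsilon/3$. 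This $\delta$ will be the constant claimed by the corollary, since it depends only on $f$ and $\varepsilon$ (via the choice of $g$).

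The final step is the triangle inequality. For any $\mu_1,\mu_2\in\M(X)$ with $d(\mu_1,\mu_2)<\delta$,
\[
\left|\int f\,d\mu_1-\int f\,d\mu_2\right|\le \left|\int (f-g)\,d\mu_1\right|+\left|\int g\,d\mu_1-\int g\,d\mu_2\right|+\left|\int (g-f)\,d\mu_2\right|.
\]
The middle term is less than $\varepsilon/3$ by the choice of $\delta$, and the outer two terms are each bounded by $\sup_{x}|f(x)-g(x)|<\varepsilon/3$, using that $\mu_1$ and $\mu_2$ are probability measures. Summing yields the desired bound by $\varepsilon$.

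There is no real obstacle here; the statement is a routine density-and-approximation consequence of the previous two lemmas. The only minor thing to be careful about is that Lemma \ref{Lemma22} produces a $\delta$ that depends on the specific simple function $g$, but since $g$ is determined by $f$ and $\varepsilon$ (through Lemma \ref{Lemma21}), this dependence is compatible with what the corollary asserts.
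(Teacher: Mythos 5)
Your proof is correct and follows essentially the same route as the paper: approximate $f$ by $g\in\mathcal{S}(\mathcal{K})$ within $\varepsilon/3$ via Lemma \ref{Lemma21}, take $\delta=\delta(g,\varepsilon/3)$ from Lemma \ref{Lemma22}, and conclude by the three-term triangle inequality. The only difference is that you spell out the final triangle-inequality estimate, which the paper leaves implicit.
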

\begin{proof}
Let $f\in C(X)$ and $\varepsilon>0$.
By Lemma \ref{Lemma21}, there exists $g\in\mathcal{S}(\mathcal{K})$ such that $\sup_{x\in X}|f(x)-g(x)|<\frac{\varepsilon}{3}$.
By Lemma \ref{Lemma22}, there exists $\delta'=\delta'(g,\varepsilon/3)>0$ such that if $\mu_1,\mu_2\in\mathcal{M}(X)$ verify $d(\mu_1,\mu_2)<\delta'$, then   $\left|\int gd\mu_1-\int gd\mu_2\right|<\frac{\varepsilon}{3}$.
Taking $\delta:=\delta'$ we conclude.
\end{proof}
  
\begin{coro}\label{Coro24}
Let $(\mu_i)_{i\in\NN}$ be a sequence of measures in $\M(X)$ and $\mu\in \M(X)$. The following statements are equivalent:
\begin{enumerate}
\item The sequence $(\mu_i)_{i\in\NN}$ converges weakly to $\mu$;
\item $\lim_{i\to\infty}d(\mu_i,\mu)=0$.
\end{enumerate}
\end{coro}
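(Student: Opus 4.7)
The plan is to establish the two implications separately, relying on Corollary \ref{Coro23} for one direction and on the clopen structure of the partitions $\mathcal{K}_n$ for the other.

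For (2) $\Rightarrow$ (1), I would argue directly from Corollary \ref{Coro23}. Fix $f\in C(X)$ and $\varepsilon>0$, and let $\delta=\delta(f,\varepsilon)>0$ be the constant provided by the corollary. Since $d(\mu_i,\mu)\to 0$, for all $i$ large enough one has $d(\mu_i,\mu)<\delta$, whence $|\int f\,d\mu_i-\int f\,d\mu|<\varepsilon$. As $f$ was arbitrary, this is the definition of weak convergence.

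For (1) $\Rightarrow$ (2), the key point is that each $K\in \mathcal{K}_n$ is clopen, so its indicator $\mathbbm{1}_K$ lies in $C(X)$. Weak convergence therefore yields $\mu_i(K)\to \mu(K)$ for every $K$ appearing in any $\mathcal{K}_n$. To pass to the full series defining $d$, I would use a tail estimate combined with a truncation. Note that for each $n$,
\[
\sum_{K\in\mathcal{K}_n}|\mu_i(K)-\mu(K)|\leq \sum_{K\in\mathcal{K}_n}\mu_i(K)+\sum_{K\in\mathcal{K}_n}\mu(K)=2,
\]
so the $n$-th block of the sum in \eqref{newmetric} is bounded by $\frac{2}{2^n s_n}\cdot s_n=\frac{2}{2^n}$ (using $s_n\geq 1$ to absorb the $s_n$ factor after multiplying by $s_n$ terms). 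Given $\varepsilon>0$, choose $N$ with $\sum_{n>N}\frac{2}{2^n}<\varepsilon/2$. For $n\leq N$ the set $\bigcup_{n\leq N}\mathcal{K}_n$ is finite, and $\mu_i(K)\to\mu(K)$ for each $K$ in it; hence for $i$ sufficiently large, the partial sum $\sum_{n=1}^{N}\sum_{K\in\mathcal{K}_n}\frac{1}{2^n s_n}|\mu_i(K)-\mu(K)|$ is less than $\varepsilon/2$. Adding the two estimates gives $d(\mu_i,\mu)<\varepsilon$.

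There is no real obstacle: the proof is a routine truncation-plus-density argument. The only point that requires care is the tail bound, which rests on the fact that $\sum_{K\in\mathcal{K}_n}|\mu_i(K)-\mu(K)|\leq 2$ uniformly in $i$ and $n$, so that the summability of $\sum_n 2^{-n}$ allows us to replace the infinite series by a finite sum to which pointwise convergence on clopen sets (a direct consequence of weak convergence in a zero-dimensional space) can be applied.
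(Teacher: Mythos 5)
Your argument is correct and is essentially the paper's own proof: the implication (2)$\Rightarrow$(1) is exactly the appeal to Corollary \ref{Coro23}, and (1)$\Rightarrow$(2) is the same truncation argument, using that indicators of the clopen sets $K\in\mathcal{K}_n$ are continuous so that $\mu_i(K)\to\mu(K)$, together with a uniform tail bound (the paper bounds each term by $1$ rather than summing to $2$ over a block, but this makes no difference).
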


\begin{proof} 
Suppose that $(\mu_i)_{i\in\NN}$ converges weakly to $\mu$. Then for every clopen set $C\subseteq X$, we have $\lim_{i\to\infty}\mu_i(C)=\mu(C)$. In particular, for every $n\in \NN$ and $K\in \mathcal{K}_n$, we have $\lim_{i\to\infty}\mu_i(K)=\mu(K)$. Let $\varepsilon>0$  and $m\in\NN$ be such that $\sum_{n>m}\frac{1}{2^n}<\frac{\varepsilon}{2}.$ Let $i_0\in\NN$ be such that 
$$
|\mu_i(K)-\mu(K)|<\frac{\varepsilon}{2}\left(\sum_{n=1}^m\frac{1}{2^n} \right)^{-1}, \mbox{ for every  } K\in \mathcal{K}_j, \mbox{ with } 1\leq j\leq m.
$$
Then, for every $i\geq i_0$,
\begin{eqnarray*}
d(\mu_i,\mu) &   = & \sum_{n=1}^{\infty}\sum_{K\in \mathcal{K}_n}\frac{1}{2^ns_n}|\mu_i(K)-\mu(K)|\\
             & =& \sum_{n=1}^{m}\sum_{K\in \mathcal{K}_n}\frac{1}{2^ns_n}|\mu_i(K)-\mu(K)|+ \sum_{n>m}\sum_{K\in \mathcal{K}_n}\frac{1}{2^ns_n}|\mu_i(K)-\mu(K)|\\
             &\leq & \frac{\varepsilon}{2} +\frac{\varepsilon}{2}\\
             &=& \varepsilon.
\end{eqnarray*}
From this we get (1) implies (2).

Conversely, let $f\in C(X)$ and $\varepsilon>0$. Let $\delta>0$ be as in Corollary \ref{Coro23}, i.e., such that if $d(\nu_1,\nu_2)<\delta$ then $\left|\int fd\nu_1-\int fd\nu_2\right|<\varepsilon$. Since $\lim_{i\to\infty}d(\mu_i, \mu)=0$, there exists $i_0\in\NN$ such that $d(\mu_i,\mu)<\delta$, for every $i\geq i_0$. This implies that 
$\left|\int fd\mu_i-\int fd\mu\right|<\varepsilon$, for every $i\geq i_0$. Since $\varepsilon>0$  and $f\in C(X)$ are arbitrary, we get
$$
\lim_{i\to\infty}\int f d\mu_i=\int f d\mu, \mbox{ for every } f\in C(X).
$$
This shows $(2)$ implies (1).
\end{proof}

\subsection{Residually finite groups and their Cantor systems.} \label{residually-finite}

 The group $G$ is {\it residually finite} if for every $g\in G\setminus\{1_G\}$ there exist a finite group $F$ and a group homomorphism $\phi:G\to F$ such that $\phi(g)\neq 1_F$. This property is equivalent to the existence of a decreasing sequence $(\Gamma_n)_{n\in\NN}$ of finite index subgroups of $G$ such that $\bigcap_{n\in\NN}\Gamma_n=\{1_G\}$. 
 Examples of residually finite groups are the free group $\FF_n$ with $n$ generators and $\ZZ^n$, for every $n\in\NN$ (see \cite{CC10} for more details about residually finite groups).

\subsubsection{Odometers}\label{odometers} 
Let $(\Gamma_n)_{n\in\NN}$ be a strictly decreasing sequence of finite index subgroups of $G$. The \textit{$G$-odometer} associated to $(\Gamma_n)_{n\in\mathbb{N}}$ is defined as
 \begin{eqnarray*}
    Z & = & \varprojlim(G/\Gamma_n,\tau_n)\\
    &= &\{(x_n)_{n\in\mathbb{N}}\in\prod_{n\in\mathbb{N}}G/\Gamma_n: \tau_n(x_{n+1})=x_n,\mbox{ for every }n\in\mathbb{N}\},
 \end{eqnarray*}
where $\tau_n\colon G/\Gamma_{n+1}\to G/\Gamma_n$ is the canonical projection, for every $n\in\mathbb{N}$. The space $Z$ is a Cantor set when we endow every $G/\Gamma_n$ with the discrete topology,  $\prod_{n\in\mathbb{N}}G/\Gamma_n$ with the product topology and  $Z$ with the induced topology.  Let $\tau:G\to Z$ be given by $\tau(g)=(g\Gamma_n)_{n\in\NN}$, for every $g\in G$. The group $G$ acts on $Z$ as $g(g_n\Gamma_n)_{n\in\NN}=(gg_n\Gamma_n)_{n\in\NN}$, for every $g\in G$ and $(g_n\Gamma_n)_{n\in\NN} \in Z$. The topological dynamical system $(Z,G)$ is
known as the $G$-{\it odometer} or odometer associated to the sequence $(\Gamma_n)_{n\in \NN}$. This system is 
 equicontinuous and minimal.  The $G$-odometer is free if and only if $\bigcap_{n\in\NN}g_n\Gamma_ng_n^{-1}=\{1_G\}$, for every $(g_n\Gamma_n)_{n\in\NN}\in Z$, which implies that there exists a free $G$-odometer if and only if $G$ is residually finite (see \cite{CP08} for more details about odometers).

\medskip

 Let $(Z,G)$ be the $G$-odometer associated to the decreasing sequence $(\Gamma_n)_{n\in\NN}$ of finite index subgroups of $G$.

For every $n\in\NN$, we define  $$C_n=\{(g_i\Gamma_i)_{i\in\NN}\in Z: g_n\Gamma_n=1_G\Gamma_n\},$$
and
$$
\P_n=\{gC_n: g\in D^{-1}_n, n\in\NN\},
$$
where $D_n\subseteq G$ is a set containing exactly one element of each class in $\{\Gamma_ng: g\in G\}$.
It is straightforward to check that the topology of $Z$ is generated by  $(\P_n)_{n\in\NN}$. Furthermore, $(\P_n)_{n\in\NN}$ is a sequence of nested clopen partitions, and for every $n\in\NN$, the intersection $gC_n\cap hC_n$ is non-empty if and only $g\in h\Gamma_n$, which is equivalent to $gC_n=hC_n$ (see, for example, \cite{CP08}). We call the sets $gC_n$, {\it basic clopen sets}. Let $\nu$ be the unique invariant probability measure of $(Z,G)$. Observe that
$$
\nu(gC_n)=\frac{1}{|D_n|}=\frac{1}{[G:\Gamma_n]}, \mbox{ for every } g\in G \mbox{ and } n\in\NN.
$$

\begin{lemma}\label{free-orbit}
 A $G$-odometer has a free orbit if and only if it is conjugate  to a $G$-odometer associated to a decreasing sequence of finite index subgroups of $G$ with trivial intersection.  
\end{lemma}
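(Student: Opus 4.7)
The plan is to prove both implications of the lemma, with the forward direction being essentially tautological and the converse requiring an explicit identification of $(Z,G)$ with an odometer built from a conjugated sequence of subgroups.

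For the ``if'' direction, suppose $(Z,G)$ is the odometer associated to a decreasing sequence $(\Gamma_n)_{n\in\NN}$ of finite index subgroups of $G$ with $\bigcap_n\Gamma_n=\{1_G\}$. Taking the distinguished point $z_0=(1_G\Gamma_n)_{n\in\NN}$, any $h\in\Stab(z_0)$ must satisfy $h\Gamma_n=\Gamma_n$ for every $n$, so $h\in\bigcap_n\Gamma_n=\{1_G\}$. Hence the orbit of $z_0$ is free.

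For the converse, suppose $(Z,G)$ is the odometer associated to $(\Gamma_n)_{n\in\NN}$ and that some $z=(g_n\Gamma_n)_{n\in\NN}\in Z$ has trivial stabilizer, which, as recalled in Subsection~\ref{odometers}, amounts to $\bigcap_{n}g_n\Gamma_n g_n^{-1}=\{1_G\}$. Setting $H_n:=g_n\Gamma_n g_n^{-1}$, I would first check that $(H_n)_{n\in\NN}$ is a (strictly) decreasing sequence of finite index subgroups: the consistency relation $\tau_n(g_{n+1}\Gamma_{n+1})=g_n\Gamma_n$ gives $g_n^{-1}g_{n+1}\in\Gamma_n$, so $g_{n+1}\Gamma_n g_{n+1}^{-1}=g_n\Gamma_n g_n^{-1}=H_n$; combined with $\Gamma_{n+1}\subseteq\Gamma_n$ this yields $H_{n+1}\subseteq H_n$, and since $[G:H_n]=[G:\Gamma_n]$, strict decrease is preserved. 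Then I would construct the desired conjugacy between $(Z,G)$ and the $G$-odometer $(Z',G)$ associated to $(H_n)_{n\in\NN}$ by defining, for each $n$, the $G$-equivariant bijection
$$\phi_n:G/\Gamma_n\to G/H_n,\qquad \phi_n(h\Gamma_n)=hg_n^{-1}H_n,$$
which is well-defined and equivariant since $\Gamma_n=g_n^{-1}H_n g_n$. Compatibility of the $\phi_n$ with the canonical projections $\tau_n:G/\Gamma_{n+1}\to G/\Gamma_n$ and $\tau_n':G/H_{n+1}\to G/H_n$ reduces once more to $g_{n+1}^{-1}g_n\in\Gamma_n$, so that $\psi:=\varprojlim\phi_n:Z\to Z'$ is a well-defined $G$-equivariant homeomorphism.

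The only delicate part will be the inverse limit bookkeeping: one must verify both that each $\phi_n$ respects equivalence modulo $\Gamma_n$ on the source and modulo $H_n$ on the target, and that the family commutes with the bonding maps. Both reductions hinge on the single relation $g_{n+1}^{-1}g_n\in\Gamma_n$ forced by the definition of $Z$, so no new ingredient is required beyond careful tracking of left cosets under conjugation by the $g_n$.
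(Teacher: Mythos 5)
Your proposal is correct and follows essentially the same route as the paper: conjugate each $\Gamma_n$ by the coordinate $g_n$ of the point with trivial stabilizer and exhibit an explicit equivariant homeomorphism between the two inverse limits (your $\psi$ is just the inverse of the paper's map $f((g_n\tilde{\Gamma}_n)_{n})=(g_nz_n\Gamma_n)_{n}$). The only difference is that you spell out the coset and bonding-map verifications that the paper leaves as ``not difficult to show.''
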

\begin{proof}
Let $Z$ be a $G$-odometer associated to the sequence $(\Gamma_n)_{n\in\NN}$  of finite index subgroups of $G$. Let $z=(z_n\Gamma_n)_{n\in\NN}\in Z$  be such that   $\Stab(z)=\bigcap_{n\in\NN}z_n\Gamma_nz_n^{-1}$  is trivial.   For every $n\in\NN$, let define $\tilde{\Gamma}_n=z_n\Gamma_nz_n^{-1}$. Let $Y$ be the $G$-odometer associated to the sequence $(\tilde{\Gamma}_n)_{n\in\NN}$. It is not difficult to show that the function $f: Y\to Z$, given by $f((g_n\tilde{\Gamma}_n)_{n\in\NN})=(g_nz_n\Gamma_n)_{n\in\NN}$ for every $(g_n\tilde{\Gamma}_n)_{n\in\NN}\in Y$, is a topological conjugacy.
\end{proof}


\medskip

 \begin{proposition}\label{equicontinuous-characterization}
  $(X,G)$  is a minimal equicontinuous Cantor system if and only if $(X,G)$ is conjugate to a $G$-odometer.     
 \end{proposition}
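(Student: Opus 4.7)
The backward direction is immediate from the discussion in Section \ref{odometers}, which observes that every $G$-odometer is a minimal equicontinuous Cantor system.

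For the forward direction, assume $(X,G)$ is a minimal equicontinuous Cantor system. The plan is to realize $(X,G)$ as an inverse limit of transitive finite $G$-sets. Concretely, I will build a nested sequence $(\mathcal{P}_n)_{n\in\NN}$ of finite clopen partitions of $X$, each permuted by $G$, generating the topology. Once this is done, fix $x_0 \in X$ and let $P_n \in \mathcal{P}_n$ be the atom containing $x_0$; the stabilizer $\Gamma_n := \{g \in G : gP_n = P_n\}$ has finite index $|\mathcal{P}_n|$ in $G$, since $G$ acts transitively on $\mathcal{P}_n$ by minimality, and after discarding repetitions the sequence $(\Gamma_n)_{n\in\NN}$ becomes strictly decreasing. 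The map $\phi: X \to \varprojlim G/\Gamma_n$ sending $x$ to the sequence of cosets addressing the $\mathcal{P}_n$-atoms through $x$ is then continuous, equivariant, and bijective (bijectivity using that the partitions generate the topology, together with a compactness argument for the nested family of non-empty atoms), hence a topological conjugacy with the $G$-odometer defined by $(\Gamma_n)$.

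The main technical step is the construction of the $G$-invariant clopen partitions. By Arzel\`a--Ascoli applied to the equicontinuous family $G$, the closure $H := \overline{G}$ in the group of homeomorphisms of $X$, endowed with the uniform topology, is a compact topological group acting continuously on $X$, with the same orbits as $G$ by density. I would then verify that for every clopen $C \subseteq X$, the stabilizer $H_C := \{h \in H : hC = C\}$ is an open subgroup of $H$: taking $\delta := \dist(C, X \setminus C) > 0$, any $h \in H$ with $\sup_{x} d(hx,x) < \delta/2$ necessarily satisfies $hC = C$, because points of $C$ move by less than $\delta/2$ and thus cannot cross into $X \setminus C$ (and symmetrically for $h^{-1}$). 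Since $H$ is compact, $H_C$ has finite index, so the orbit $\{hC : h \in H\}$ is finite.

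To finish, I would start from any nested sequence of clopen partitions $(\mathcal{Q}_n)_{n\in\NN}$ of $X$ with $\diam(\mathcal{Q}_n) \to 0$ (available because $X$ is a Cantor set) and set $\mathcal{P}_n := \bigvee_{h\in H} h\mathcal{Q}_n$, which by the previous paragraph is a finite $H$-invariant (hence $G$-invariant) clopen refinement of $\mathcal{Q}_n$; after passing to a subsequence, $(\mathcal{P}_n)$ is strictly refining and still generates the topology. The main obstacle I anticipate is precisely this invariant-partition construction, and the decisive fact there is the openness of $H_C$; once that is secured, the remaining verifications are routine.
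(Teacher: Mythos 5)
Your argument is correct, but it takes a genuinely different route from the paper's. The paper's proof is essentially a citation: it follows the proof of Theorem 2.7 in \cite{CM16} (which likewise passes through the compact closure of $G$ inside the homeomorphism group of $X$), adapts it to the possibly non-free case to conclude that $(X,G)$ is regularly recurrent, and then invokes Lemma 4 and Theorem 2 of \cite{CP08} to produce the finite-index subgroups $\Gamma_n$ as stabilizers of clopen orbit closures $C_n=\overline{O_{\Gamma_n}(x)}\subseteq U_n$ and to obtain the conjugacy with the associated odometer. You instead build the odometer structure directly: the openness (hence finite index, by compactness of $H=\overline{G}$) of the stabilizer $H_C$ of a clopen set yields finite $H$-orbits of clopen sets, hence finite $G$-invariant clopen partitions $\mathcal{P}_n=\bigvee_{h\in H}h\mathcal{Q}_n$ refining a generating sequence, and the coset addresses of atoms give the equivariant homeomorphism with $\varprojlim G/\Gamma_n$. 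What your approach buys is self-containedness (no regular recurrence, no appeal to \cite{CP08}); what it costs is having to secure the standard facts about the enveloping group $H$ — that uniform limits of elements of the equicontinuous family are again homeomorphisms and that $H$ is a compact topological group in the uniform topology — which is cleanest after replacing $d$ by the invariant metric $d'(x,y)=\sup_{g\in G}d(gx,gy)$, so that $H$ sits inside the isometry group; this is the same compactness phenomenon the paper uses via \cite{CM16}. Two small points to tidy: the remark that $H$ has ``the same orbits as $G$'' is inaccurate (the $H$-orbit of a point is the closure of its $G$-orbit, which is all of $X$ by minimality), though it is never used; and the passage to a strictly decreasing subsequence of $(\Gamma_n)_{n\in\NN}$ should be justified by noting that $[G:\Gamma_n]=|\mathcal{P}_n|\to\infty$ since the diameters of the atoms tend to $0$ and $X$ is infinite, so the sequence cannot stabilize.
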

\begin{proof}
  It is sufficient to follow the proof of \cite[Theorem 2.7]{CM16}, and to note that if $(X,G)$ is not free, then the function $\varphi:G\to K$ is not necessarily injective. 
  This implies that the action of $G$ on $K$ is not necessarily free.   The intersection of the groups $G_n$'s mentioned there, is the stabilizer of the neutral element $e\in K$ (and maybe non-trivial). Then, the proof follows  to conclude that $(X,G)$ is regularly recurrent, meaning that for every $x\in X$ and  every clopen neighborhood $C$ of $x$, there exists a finite index subgroup $\Gamma$ of $G$ such that $\gamma x\in C$, for every $\gamma\in \Gamma$.  Let $x\in X$ and $(U_n)_{n\in\NN}$ be a nested sequence of clopen sets such that $\bigcap_{n\in\NN} U_n=\{x\}$. Then, by \cite[Lemma 4]{CP08}, for every $n\in\NN$ there exists a finite index subgroup $\Gamma_n$ such that $C_n=\overline{O_{\Gamma_n}(x)}\subseteq U_n$ is a clopen set verifying $C_n\cap \gamma C_n\neq \emptyset$ if and only if $\gamma C_n=C_n$, and  if and only if $\gamma\in \Gamma_n$. The system $(X,G)$ is conjugate to the odometer given by the sequence $(\Gamma_n)_{n\in\NN}$ (see the proof of \cite[Theorem 2]{CP08}).
\end{proof}

The following result is a direct consequence of Lemma \ref{free-orbit} and Proposition \ref{equicontinuous-characterization}.

\begin{proposition}\label{group-characterization}
Let $G$ be an infinite countable group. The following statements are equivalent:
\begin{enumerate}
\item $G$ is residually finite
\item There exists a free  $G$-odometer.
\item There exists a free minimal equicontinuous Cantor system $(X,G)$.
\item There exists a  minimal equicontinuous Cantor system $(X,G)$ with a free orbit.
\end{enumerate}
\end{proposition}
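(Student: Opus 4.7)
The plan is to establish the cycle $(1)\Rightarrow(2)\Rightarrow(3)\Rightarrow(4)\Rightarrow(1)$, with all the nontrivial content delegated to Proposition \ref{equicontinuous-characterization} and Lemma \ref{free-orbit}.

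For $(1)\Rightarrow(2)$, I would invoke the characterization of residual finiteness recalled at the beginning of Subsection \ref{residually-finite}: the hypothesis provides a strictly decreasing sequence $(\Gamma_n)_{n\in\NN}$ of finite index subgroups of $G$ with $\bigcap_{n\in\NN}\Gamma_n=\{1_G\}$. To promote this to a \emph{free} $G$-odometer, I would replace each $\Gamma_n$ by its normal core $N_n=\bigcap_{g\in G}g\Gamma_n g^{-1}$, which is normal and of finite index in $G$ (as the kernel of the action $G\to\mathrm{Sym}(G/\Gamma_n)$), and which still satisfies $\bigcap_{n}N_n=\{1_G\}$. For the resulting odometer, normality gives $g_nN_ng_n^{-1}=N_n$, so the freeness criterion stated in Subsection \ref{odometers} reduces to $\bigcap_nN_n=\{1_G\}$ and is automatic.

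The implications $(2)\Rightarrow(3)$ and $(3)\Rightarrow(4)$ are essentially tautological: every $G$-odometer is a minimal equicontinuous Cantor system, as recalled in Subsection \ref{odometers}, and a free action in particular has a free orbit.

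For $(4)\Rightarrow(1)$, the main implication, I would start from a minimal equicontinuous Cantor system $(X,G)$ having a free orbit, and apply Proposition \ref{equicontinuous-characterization} to obtain a topological conjugacy between $(X,G)$ and some $G$-odometer $(Z,G)$. Since topological conjugacies intertwine stabilizers, $(Z,G)$ inherits a free orbit. Lemma \ref{free-orbit} then yields a further conjugacy to a $G$-odometer associated to a decreasing sequence $(\tilde\Gamma_n)_{n\in\NN}$ of finite index subgroups with $\bigcap_n\tilde\Gamma_n=\{1_G\}$, and the existence of such a sequence is precisely the defining characterization of residual finiteness. The only point needing minor care is the passage to normal cores in $(1)\Rightarrow(2)$; everything else is a direct assembly of the previously established results.
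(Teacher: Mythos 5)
Your proposal is correct and follows essentially the same route as the paper, which proves the proposition as a direct assembly of Lemma \ref{free-orbit}, Proposition \ref{equicontinuous-characterization}, and the freeness criterion for odometers recalled in Section \ref{odometers}; your normal-core argument for $(1)\Rightarrow(2)$ is just the standard justification of the remark the paper cites there (only note that, to match the paper's convention of strictly decreasing sequences, you should pass to a subsequence of the cores, which is harmless since $G$ is infinite).
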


\subsubsection{Toeplitz subshifts} Let $\Sigma$ be a finite alphabet. An element $x\in \Sigma^G$ is {\it Toeplitz} if for every $g\in G$ there exists a finite index subgroup $\Gamma$ of $G$ such that $x(g)=x(\gamma^{-1} g)$, for every $\gamma\in \Gamma$. We say that $\Gamma$ is a {\it period } of $x$ or that the coordinate $g$ is periodic with respect to $\Gamma$.  A subshift $X\subseteq \Sigma^G$ is a {\it Toeplitz subshift} if there exists a  Toeplitz element $x\in \Sigma^G$,  such that $X=\overline{O_G(x)}$. The Toeplitz subshifts are always  minimal (see \cite{CP08, Dow05, Kr07, Kr10} for more details on Toeplitz subshifts).
 Observe that if $x\in \Sigma^G$ is a non-periodic Toeplitz subshift (i.e., its stabilizer  with respect to the shift action is trivial), then the intersection of its periods is trivial. From this observation we deduce that the existence of non-periodic Toeplitz elements in $\Sigma^{G}$  implies that $G$ is residually finite. The following proposition tell us the  converse holds as well.
 

 

\begin{proposition}\label{extension-Toeplitz}(\cite[Theorem 7]{CP08}) Let $(\Gamma_n)_{n\in\NN}$ be a strictly decreasing sequence of finite index  subgroups (not necessarily normal or with trivial intersection) of $G$.
Then there exists a Toeplitz subshift $X\subseteq \{0,1\}^G$ such that $(X,G)$ is an almost 1-1 extension of the $G$-odometer associated to $(\Gamma_n)_{n\in\NN}$. 
\end{proposition}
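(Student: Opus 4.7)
The plan is to build a Toeplitz element $x\in\{0,1\}^G$ by an inductive coding procedure adapted to the sequence $(\Gamma_n)_{n\in\NN}$ and then take $X:=\overline{O_G(x)}$. Enumerate $G=\{g_1,g_2,\ldots\}$ with $g_1=1_G$, and fix for each $n$ a right transversal $D_n\subseteq G$ of $\Gamma_n$ containing $1_G$. Construct inductively finite sets $T_n\subseteq G$ and partial maps $x_n\colon G\to\{0,1,*\}$ (with $*$ meaning ``undefined'') such that $T_n=\{g:x_n(g)\neq *\}$ is a union of right cosets $\Gamma_n d$, $x_n$ is constant on each such coset, $g_n\in T_n$, and $x_n$ extends $x_{n-1}$. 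The inclusion $\Gamma_{n+1}\subseteq \Gamma_n$ makes $T_n$ automatically a union of $\Gamma_{n+1}$-cosets, so at stage $n+1$ one adjoins (if necessary) the coset $\Gamma_{n+1}g_{n+1}$ and assigns labels in $\{0,1\}$ with enough variety that the resulting finite labeled configuration on $T_{n+1}$ has left-shift-stabilizer exactly $\Gamma_{n+1}$; this is possible because at each finite stage there are only finitely many cosets and two labels afford enough asymmetry. The pointwise limit $x:=\lim_n x_n\in\{0,1\}^G$ is then well defined, since every $g_n\in T_n$.

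Every $g\in G$ eventually lies in some $T_n$, and $x$ is constant on the right coset $\Gamma_n g\subseteq T_n$, so $\Gamma_n$ is a period of $x$ at coordinate $g$: $x$ is Toeplitz, and $X:=\overline{O_G(x)}$ is a minimal subshift. Let $z_0:=(\Gamma_n)_{n\in\NN}\in Z$ and define $\pi\colon X\to Z$ by $\pi(hx):=hz_0$ for $h\in G$, extended by continuity. The extension is unambiguous because if $h_k x\to y$ in $X$, the separating property of the labeling guarantees that different cosets of $\Gamma_n$ produce distinct $\Gamma_n$-periodic skeletons visible in any sufficiently large finite window, so $(h_k\Gamma_n)_k$ must stabilize for each $n$, and $\pi(y):=\lim_k h_kz_0$ is well defined. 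Continuity, equivariance, and surjectivity of $\pi$ are then routine.

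To see that $\pi$ is almost 1-1, I would show $\pi^{-1}(z_0)=\{x\}$. If $\pi(y)=z_0$ and $h_k x\to y$, then by continuity $h_k\Gamma_n\to \Gamma_n$ in the discrete space $G/\Gamma_n$ for every $n$, so $h_k\in\Gamma_n$ for $k$ large enough. For any $g\in T_m$ with $m\le n$ we have $\Gamma_n\subseteq \Gamma_m$, hence $h_k x(g)=x(h_k^{-1}g)=x(g)$ by the $\Gamma_m$-periodicity of $x$ at $g$. Letting $n\to\infty$ covers every $g\in G$ and forces $y=x$. By equivariance, $\pi^{-1}(gz_0)=\{gx\}$ for every $g\in G$, and minimality of $(Z,G)$ makes the $G$-orbit of $z_0$ dense in $Z$, so $\pi$ is almost 1-1.

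\textbf{Main obstacle.} The only nonroutine task is the labeling choice in the inductive construction: both the well-definedness of $\pi$ and the computation of its fibers rest on the claim that the finite labeled configuration on $T_n$ has left-shift-stabilizer exactly $\Gamma_n$ (rather than some strictly larger set of translations that happen to permute the cosets in $T_n$ consistently with the labels). Verifying that this separating condition can be maintained at every inductive stage---for instance by reserving a distinguished label pattern on a ``marker'' coset such as $\Gamma_{n+1}\cdot 1_G$ and varying labels on the other newly adjoined cosets---is the main technical point; with this in hand, all remaining steps are bookkeeping.
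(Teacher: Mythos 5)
The paper itself does not prove this statement; it is quoted from \cite[Theorem 7]{CP08}, and the entire content of that theorem is the inductive construction you defer. Your outline reproduces the standard architecture (fill in the array coset by coset, take the limit Toeplitz point $x$, read the $\Gamma_n$-coset off the pattern to get $\pi$, and check the fiber over $(1_G\Gamma_n)_{n}$ is a singleton), but the step you explicitly postpone in your ``Main obstacle'' paragraph --- choosing the labels so that the configuration built at stage $n$ ``has left-shift-stabilizer exactly $\Gamma_n$'' and so that the coset can be read continuously --- \emph{is} the theorem. Everything you call bookkeeping (well-definedness and continuity of $\pi$, $\pi^{-1}(z_0)=\{x\}$) rests on it, so as written the proposal restates the difficulty rather than resolving it.

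Two concrete points show the gap is genuine and not a routine verification. First, the invariance condition you invoke is not well posed as stated: $T_n$ is a union of right cosets of a finite-index subgroup, hence infinite, and since the $\Gamma_n$ are not assumed normal, left translation by $g$ carries right $\Gamma_n$-cosets to right cosets of the conjugate $g\Gamma_n g^{-1}$, not of $\Gamma_n$; the correct formulation is the $\Per$-containment condition defining an \emph{essential} period (Section 2 of the paper), and what must be proved is that each $\Gamma_n$ is an essential period of the limit point $x$, equivalently that the sets $B_n=\{y:\Per(y,\Gamma_n,\alpha)=\Per(x,\Gamma_n,\alpha)\ \forall\alpha\}$ are clopen and their translates by a transversal partition $X$ (Lemma \ref{partitions}); only then do your claims ``distinct cosets produce distinct skeletons visible in a finite window'' and the continuity of $\pi$ follow. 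Second, the assertion that with alphabet $\{0,1\}$ ``two labels afford enough asymmetry'' at each stage is exactly the nontrivial combinatorial claim: a priori the pattern could be preserved by an intermediate group between $\Gamma_{n+1}$ and $\Gamma_n$, and the $\Per$-sets of the limit $x$ acquire additional coordinates from all later stages, so essentiality of $\Gamma_{n+1}$ cannot be certified by inspecting stage $n+1$ alone without a coherent marker scheme across scales. The proof in \cite{CP08} does precisely this, using nested transversals $D_n$ with the compatibility $D_{n+1}=\bigcup_{v\in D_{n+1}\cap\Gamma_n}vD_n$ (the properties (P1)--(P4) recalled in Section \ref{Good-subsequences}) to place the distinguishing patterns consistently; without carrying out such a construction and verifying the essentiality in the limit, the proposal has a hole at its central point.
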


\subsubsection{Period structure of a Toeplitz element.}

Let $x\in \Sigma^G$ be a Toeplitz element. Suppose that the finite index subgroup $\Gamma$ of $G$ is a period of $x$.  We define  

\begin{align*}
    \Per(x,\Gamma,\alpha)&=\{g\in G\colon x(\gamma g)=\alpha \mbox{ for every }\gamma\in\Gamma\}, \mbox{ for every } \alpha\in \Sigma.\\
    \Per(x,\Gamma)&=\bigcup_{\alpha\in \Sigma}\Per(x,\Gamma,\alpha).
\end{align*}
The elements of $\Per(x,\Gamma)$ are those belonging to some coset $\Gamma g $ for which $x$ restricted to $\Gamma g$ is constant.



For every $g\in G$, for every $\alpha\in\Sigma$  and every   period $\Gamma$ of $x$, we have   $g\Per(x,g^{-1}\Gamma g,\alpha)=\Per(g x,\Gamma,\alpha)$. The  period $\Gamma$ of $x$ is an {\it essential period of} $x$ if $\Per(x,\Gamma,\alpha)\subseteq  \Per(g x,\Gamma,\alpha)$ for every $\alpha\in \Sigma$, implies   $g\in \Gamma$.

 \begin{lemma}(\cite[Lemma 2.4]{CCG23}, \cite{CP08})
Let $x\in\Sigma^G$ be a Toeplitz element. For every period $\Gamma$ of $x$, there exists an essential  period $K$ of $x$ such that $\Per(x,\Gamma,\alpha)\subseteq\Per(x, K, \alpha)$, for every $\alpha\in\Sigma$.
 \end{lemma}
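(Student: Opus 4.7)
The plan is to construct the essential period $K$ as the stabilizer of a ``$\Gamma$-skeleton'' derived from $x$. Introduce a fresh symbol $\ast \notin \Sigma$ and let $p : G \to \Sigma \cup \{\ast\}$ be the coloring $p(g) = x(g)$ when $g \in \Per(x,\Gamma)$, and $p(g) = \ast$ otherwise. Then define
$$
K := \{g \in G : p(g^{-1}h) = p(h) \text{ for all } h \in G\},
$$
which is a subgroup of $G$ as the stabilizer of $p$ under the shift action.

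The routine properties are then straightforward. That $\Gamma \subseteq K$ (and hence $K$ has finite index) follows from the fact that $\Per(x,\Gamma)$ is a union of $\Gamma$-cosets on each of which $x$ is constant. The containment $\Per(x,\Gamma,\alpha) \subseteq \Per(x,K,\alpha)$ follows directly from the definition of $K$: for $h \in \Per(x,\Gamma,\alpha)$ and $k \in K$, substituting $kh$ for $h$ in the defining identity of $K$ yields $p(kh) = p(h) = \alpha$, whence $x(kh) = \alpha$. Combined with the reverse inclusion from $\Gamma \subseteq K$, this actually yields the equality $\Per(x,K,\alpha) = \Per(x,\Gamma,\alpha)$ for every $\alpha$.

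The main obstacle is establishing the essentiality of $K$: if $g \in G$ satisfies $\Per(x,K,\alpha) \subseteq \Per(gx,K,\alpha)$ for every $\alpha$, one must conclude $g \in K$. Using the identity $\Per(gx,K,\alpha) = g\Per(x, g^{-1}Kg, \alpha)$ together with the equality $\Per(x,K,\alpha) = \Per(x,\Gamma,\alpha)$ just obtained, the hypothesis translates into the statement that $x$ is constant equal to $x(h)$ on $g^{-1}Kh$ for every $h \in \Per(x,\Gamma)$. The goal is to promote this into $p(g^{-1}h) = p(h)$ for every $h \in G$, which by the very definition of $K$ gives $g \in K$. The delicate point is that $\Gamma$ and $K$ need not be normal in $G$, so $g^{-1}\Gamma g$ can differ from $\Gamma$ and comparing their periodic structures is not automatic. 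This is precisely where the Toeplitz structure of $x$ intervenes: every coordinate of $x$ admits its own finite-index period, providing extra local symmetries that can be intersected with $\Gamma$ and combined with the hypothesis to verify $p(g^{-1}h) = p(h)$ at coordinates $h \notin \Per(x,\Gamma)$ as well, along the lines of the standard arguments in \cite{CP08, CCG23}.
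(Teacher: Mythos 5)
Your construction handles the routine half correctly: your $K$ is exactly $\{g\in G:\ g\Per(x,\Gamma,\alpha)=\Per(x,\Gamma,\alpha)\ \mbox{for all}\ \alpha\in\Sigma\}$, it is a finite-index subgroup containing $\Gamma$, and $\Per(x,K,\alpha)=\Per(x,\Gamma,\alpha)$ for every $\alpha$. The genuine gap is the step you defer to ``standard arguments'': essentiality of this $K$ is not only unproven, it can fail. Unwinding definitions (using the $K$-invariance of each $\Per(x,\Gamma,\alpha)$), the essentiality hypothesis for your $K$ reduces to the single condition $x(g^{-1}h)=x(h)$ for all $h\in\Per(x,\Gamma)$, which only tells you that $x$ is constant on the sets $g^{-1}\Gamma h=(g^{-1}\Gamma g)\,g^{-1}h$, i.e.\ that $g^{-1}h$ is periodic for the conjugate $g^{-1}\Gamma g$ rather than for $\Gamma$; the local periods of other coordinates do not repair this. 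A concrete counterexample: let $G=D_\infty=\langle t,s\mid s^2=1,\ sts=t^{-1}\rangle$, $\Gamma=\langle t^4,s\rangle$, $N=\langle t^4\rangle$, and let $x$ be the $N$-periodic (hence Toeplitz) element constant on each right $N$-coset, with value $\alpha$ on $N$, $Ns$, $Nt^2$, $Nt^3$, $Nt^3s$ and value $\beta$ on $Nt$, $Nts$, $Nt^2s$. Then $\Per(x,\Gamma,\alpha)=\Gamma$ and no other right $\Gamma$-coset is monochromatic, so your $K$ equals $\Gamma$; yet $x\equiv\alpha$ on $t^{-1}\Gamma=Nt^3\cup Nt^3s$, so $g=t$ satisfies $\Per(x,K,\alpha)\subseteq\Per(gx,K,\alpha)$ for every $\alpha$ while $t\notin K$. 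Hence your $K$ is not an essential period, and the argument cannot be completed as structured.

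The missing idea is a device that removes the non-normality of $\Gamma$ \emph{before} building the skeleton. For instance, replace $\Gamma$ by a normal finite-index subgroup $N\subseteq\Gamma$ (its normal core), noting $\Per(x,\Gamma,\alpha)\subseteq\Per(x,N,\alpha)$, and run your stabilizer construction on the $N$-skeleton: $K=\{g:\ g\Per(x,N,\alpha)=\Per(x,N,\alpha)\ \mbox{for all}\ \alpha\}$. As before $N\subseteq K$ and $\Per(x,K,\alpha)=\Per(x,N,\alpha)\supseteq\Per(x,\Gamma,\alpha)$, and now essentiality does follow: the hypothesis gives $x(g^{-1}h)=x(h)$ for all $h\in\Per(x,N)$, hence $x\equiv\alpha$ on $g^{-1}Nh=Ng^{-1}h$ for each $h\in\Per(x,N,\alpha)$, so $g^{-1}\Per(x,N,\alpha)\subseteq\Per(x,N,\alpha)$; since both sides are unions of right $N$-cosets and left translation by $g^{-1}$ permutes the finitely many right $N$-cosets, the inclusion is an equality for every $\alpha$, i.e.\ $g\in K$. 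Note also that the paper gives no proof of this lemma (it cites \cite[Lemma 2.4]{CCG23} and \cite{CP08}), so the comparison here is with what a complete argument requires; as written, yours stops exactly at the decisive step, and with your particular $K$ that step is false in general.
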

 
 A {\it period structure} of $x$ is a nested sequence  $(\Gamma_n)_{n\in\mathbb{N}}$ of   essential periods of $x$, such that $G=\bigcup_{n\in\mathbb{N}}\Gamma_n$. A period structure of $x$ always exists (see \cite[Corollary 6]{CP08}).
  Let $(\Gamma_n)_{n\in\mathbb{N}}$ be a period structure of $x$. Let $X=\overline{O_{G}(x)}$ be the associated Toeplitz subshift. For each $n\in\mathbb{N}$, consider the set
 $$B_n=\{ y\in X: \Per(y,\Gamma_n, \alpha)=\Per(x, \Gamma_n, \alpha), \mbox{ for every } \alpha\in \Sigma\}.$$
It is straightforward to verify that $gx\in B_n$ if and only if $g\in \Gamma_n$. This implies the following result.
 \begin{lemma}(\cite[Lemma 8 and Proposition 6]{CP08} or \cite[Lemma 2.5]{CCG23})\label{partitions}
 For every $n\in\mathbb{N}$, the set $B_n$ is closed. In addition, for every $g,h\in G$ the following statements are equivalent:
 \begin{enumerate}
\item $gB_n\cap hB_n\neq \emptyset.$
\item $gB_n= hB_n.$
\item $g\Gamma_n=h\Gamma_n$.
 \end{enumerate}
Then the collection $\{g^{-1}B_n: g\in D_n^{-1}\}$ is a clopen partition of $X$, where $D_n$ is a subset of $G$ containing exactly one representative element of the classes  $\{\Gamma_ng: g\in G\}$, for every $n\in\NN$. 
\end{lemma}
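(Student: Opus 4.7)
The plan is to prove the three parts of the lemma in sequence: the equivalences among (1), (2), (3); the closedness of $B_n$; and finally the partition property. The starting point is the preliminary observation quoted just above the lemma, namely $gx\in B_n$ if and only if $g\in\Gamma_n$, itself a consequence of the essentiality of $\Gamma_n$.

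The equivalences are verified as follows. The implication $(2)\Rightarrow(1)$ is immediate since $x\in B_n$. For $(3)\Rightarrow(2)$, write $s=h^{-1}g\in\Gamma_n$; it suffices to check $sB_n=B_n$, which follows from the identity $\Per(sy,\Gamma_n,\alpha)=s\,\Per(y,s^{-1}\Gamma_n s,\alpha)$ together with the fact that $\Per(x,\Gamma_n,\alpha)$ is a union of right $\Gamma_n$-cosets, hence left-invariant under $\Gamma_n$. For $(1)\Rightarrow(3)$, pick $z=gy_1=hy_2$ with $y_i\in B_n$ and set $s=h^{-1}g$; then $sy_1=y_2\in B_n$, and combining $\Per(y_i,\Gamma_n,\alpha)=\Per(x,\Gamma_n,\alpha)$ with the identity above yields $\Per(x,\Gamma_n,\alpha)\subseteq\Per(sx,\Gamma_n,\alpha)$ for every $\alpha\in\Sigma$; the essentiality hypothesis then forces $s\in\Gamma_n$, i.e.\ $g\Gamma_n=h\Gamma_n$.

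For the closedness of $B_n$, let $(y_k)\subseteq B_n$ with $y_k\to y\in X$. For $g\in\Per(x,\Gamma_n,\alpha)$, pointwise convergence together with $y_k(\gamma g)=\alpha$ gives $y(\gamma g)=\alpha$ for every $\gamma\in\Gamma_n$, so $\Per(x,\Gamma_n,\alpha)\subseteq\Per(y,\Gamma_n,\alpha)$. For the reverse inclusion: if some $g$ belonged to $\Per(y,\Gamma_n,\alpha)\setminus\Per(x,\Gamma_n,\alpha)$, then either $g\in\Per(x,\Gamma_n,\beta)$ for some $\beta\neq\alpha$, which via the $\supseteq$-inclusion already proved for $\beta$ forces the contradiction $y(g)=\alpha=\beta$, or $g\notin\Per(x,\Gamma_n)$, and this latter case is excluded by combining the preliminary fact $hx\in B_n\iff h\in\Gamma_n$ with an orbit-closure approximation $y=\lim h_j x$ to contradict essentiality. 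Once closedness is in hand, the partition property follows quickly: $(1)\Leftrightarrow(3)$ gives pairwise disjointness of $\{g^{-1}B_n:g\in D_n^{-1}\}$ since $D_n^{-1}$ is a left $\Gamma_n$-transversal; the finite union $U=\bigcup_{g\in D_n^{-1}}g^{-1}B_n$ is closed, non-empty (it contains $x$), and $G$-invariant (by (3), every translate $hB_n$ coincides with some $g^{-1}B_n$), so minimality of $(X,G)$ forces $U=X$, and a finite disjoint closed cover is automatically clopen.

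The main obstacle is the closedness step, specifically excluding that a limit $y\in X$ acquires a $\Gamma_n$-periodic position outside of $\Per(x,\Gamma_n)$. This is not automatic at the level of $\Sigma^G$, where the $\subseteq$-inclusion of $\Per$-sets is an open condition; genuinely ruling it out requires both the essentiality of $\Gamma_n$ and the membership of $y$ in the orbit closure $\overline{O_G(x)}$, since an added periodic coset would correspond to a coincidence $hx\in B_n$ for some $h\notin\Gamma_n$, directly violating the preliminary fact.
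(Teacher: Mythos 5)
Your easy steps are fine: $(2)\Rightarrow(1)$, $(3)\Rightarrow(2)$ (here the identity $\Per(sy,\Gamma_n,\alpha)=s\Per(y,s^{-1}\Gamma_n s,\alpha)$ really does close up, because $s\in\Gamma_n$ makes $s^{-1}\Gamma_n s=\Gamma_n$ and $\Per(x,\Gamma_n,\alpha)$ is $\Gamma_n$-left-invariant), the inclusion $\Per(x,\Gamma_n,\alpha)\subseteq\Per(y,\Gamma_n,\alpha)$ for limits, and the minimality argument for the covering. But the two steps that carry the actual content of the lemma have genuine gaps. First, in $(1)\Rightarrow(3)$ you claim that $y_1,\,sy_1\in B_n$ ``yields'' $\Per(x,\Gamma_n,\alpha)\subseteq\Per(sx,\Gamma_n,\alpha)$. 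It does not: the identity gives $\Per(sy_1,\Gamma_n,\alpha)=s\Per(y_1,s^{-1}\Gamma_n s,\alpha)$, and membership of $y_1$ in $B_n$ says nothing about periods of $y_1$ with respect to the conjugate subgroup $s^{-1}\Gamma_n s$; arguing coordinatewise you only get $y_1(s^{-1}\gamma g)=\alpha$ for $g\in\Per(x,\Gamma_n,\alpha)$, and this transfers to $x$ only when $s^{-1}\gamma g$ happens to lie in $\Per(x,\Gamma_n)$, where $x$ and $y_1$ agree. Your derivation is complete when $\Gamma_n$ is normal (or $G$ abelian), but the paper works with finite-index subgroups that are explicitly not assumed normal; in the general case one must use that $y_1$ lies in $\overline{O_G(x)}$ and run an approximation argument with translates $h_jx$, which your step never invokes. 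Second, in the closedness proof your case ``$g\notin\Per(x,\Gamma_n)$'' is exactly the hard point, and the sentence ``excluded by combining the preliminary fact with an orbit-closure approximation to contradict essentiality'' is a restatement of the difficulty, not an argument: if $y=\lim_j h_jx$ with $h_j\in\Gamma_n$, then each $h_jx$ is non-constant on the extra coset $\Gamma_n g$, but the witnesses of non-constancy may escape every finite window, so no contradiction follows from pointwise convergence plus the fact $hx\in B_n\iff h\in\Gamma_n$; moreover, even getting $h_j\in\Gamma_n$ for the approximating translates already requires the coset rigidity you are trying to prove. The paper itself does not prove the lemma but quotes \cite{CP08} and \cite{CCG23}, where precisely these two points are handled by such orbit-closure arguments; as written, your proposal leaves them open.

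A small additional remark: in the final step the family in question is $\{g^{-1}B_n:g\in D_n^{-1}\}=\{dB_n:d\in D_n\}$, whose disjointness is governed by the left cosets $d\Gamma_n$, while $D_n$ is a transversal of the right cosets $\Gamma_n d$; your justification ``since $D_n^{-1}$ is a left $\Gamma_n$-transversal'' refers to the wrong set of translating elements (for non-normal $\Gamma_n$ these conditions differ). This appears to stem from the statement itself and is easily repaired by using the translates $d^{-1}B_n$, $d\in D_n$, but it should be said precisely.
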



\begin{proposition} \label{1-1-extension}
Let $x\in \Sigma^G$ be a Toeplitz element and let $X=\overline{O_G(x)}$.  Let $(\Gamma_n)_{n\in\mathbb{N}}$ be  a period structure of $x$, and  let $Y$ be the $G$-odometer associated to $(\Gamma_n)_{n\in\mathbb{N}}$. The map $\pi:X\to Y$ given by
$$
\pi(x)=(g_n\Gamma_n)_{n\in\mathbb{N}}, \mbox{ where  } x\in g_nB_n, \mbox{ for every } n\in\mathbb{N}
$$
is an almost 1-1 factor map. Since $(X,G)$ is minimal, this implies that   $(Y,G)$ is the maximal equicontinuous factor of $X$. 

Conversely, if $(Y,G)$ is a minimal equicontinuous Cantor system, then the symbolic almost 1-1 extensions of $(Y,G)$ are Toeplitz subshifts.
\end{proposition}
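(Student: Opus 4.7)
My plan is to verify in turn that $\pi$ is well-defined, continuous, equivariant, surjective, and almost one-to-one, then invoke a standard criterion to identify $(Y,G)$ as the maximal equicontinuous factor, and finally handle the converse by a closed-map argument. For well-definedness, Lemma \ref{partitions} gives, for each $n$, a clopen partition $\{g^{-1}B_n : g\in D_n^{-1}\}$ of $X$, so every $y\in X$ determines a unique coset $g_n\Gamma_n$ with $y\in g_nB_n$. To see that $(g_n\Gamma_n)_n$ lies in the inverse limit $Y$, I would verify the nesting $B_{n+1}\subseteq B_n$: if $z\in B_{n+1}$ then $\Per(z,\Gamma_{n+1},\alpha)=\Per(x,\Gamma_{n+1},\alpha)$, and since $\Gamma_{n+1}\subseteq\Gamma_n$ gives $\Per(\cdot,\Gamma_n,\alpha)\subseteq\Per(\cdot,\Gamma_{n+1},\alpha)$, essentiality of $\Gamma_n$ forces $\Per(z,\Gamma_n,\alpha)=\Per(x,\Gamma_n,\alpha)$, whence $z\in B_n$. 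Then $g_{n+1}B_{n+1}\cap g_nB_n\neq\emptyset$ and Lemma \ref{partitions} force $g_{n+1}\Gamma_n=g_n\Gamma_n$, i.e.\ compatibility with the odometer projections. Continuity is immediate since each coordinate $y\mapsto g_n\Gamma_n$ is locally constant; equivariance follows from $hy\in hg_nB_n$; and $\pi(x)=(1_G\Gamma_n)_n$ together with minimality of the odometer gives $\pi(X)\supseteq\overline{O_G(\pi(x))}=Y$.

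Next I would show $\pi^{-1}(\pi(x))=\{x\}$. If $\pi(y)=\pi(x)$, then $y\in B_n$ for every $n$, so $\Per(y,\Gamma_n,\alpha)=\Per(x,\Gamma_n,\alpha)$ for every $\alpha\in\Sigma$ and every $n\in\NN$. Given $g\in G$, the period-structure hypothesis yields some $n$ with $g\in\Per(x,\Gamma_n,x(g))$; then $g\in\Per(y,\Gamma_n,x(g))$, and taking $\gamma=1_G$ in the defining equation of $\Per(y,\Gamma_n,x(g))$ gives $y(g)=x(g)$. So $y=x$, and by equivariance the dense orbit $O_G(\pi(x))$ consists of points with singleton fiber, proving $\pi$ is almost 1-1. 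That $(Y,G)$ is the maximal equicontinuous factor of $(X,G)$ now follows from the classical principle that any equicontinuous factor of a minimal system must identify proximal pairs, and in an almost 1-1 extension of a minimal equicontinuous system all proximal pairs lie in a common $\pi$-fiber; hence every equicontinuous factor of $(X,G)$ factors through $\pi$.

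For the converse, suppose $\tau:X\to Y$ is an almost 1-1 factor map with $X\subseteq\Sigma^G$ a minimal subshift and $(Y,G)$, by Proposition \ref{equicontinuous-characterization}, conjugate to the $G$-odometer of a decreasing sequence $(\Gamma_n)_n$. Pick $x_0\in X$ with $\tau^{-1}(\tau(x_0))=\{x_0\}$; such an $x_0$ exists by almost 1-1-ness, and minimality gives $X=\overline{O_G(x_0)}$, so it suffices to verify that $x_0$ is Toeplitz. Fix $g\in G$ and consider the clopen cylinder $U_g=\{z\in X : z(g)=x_0(g)\}$, which contains $\tau^{-1}(\tau(x_0))$. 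Because $\tau$ is closed (a continuous map between compact Hausdorff spaces), $\tau(X\setminus U_g)$ is closed in $Y$ and misses $\tau(x_0)$, so some basic clopen neighborhood $V_n=\{z\in Y : z_n=\tau(x_0)_n\}$ of $\tau(x_0)$ in the inverse-limit topology satisfies $\tau^{-1}(V_n)\subseteq U_g$. Writing $\tau(x_0)=(h_k\Gamma_k)_k$, the set $\tilde\Gamma:=\{\gamma\in G : \gamma\tau(x_0)\in V_n\}=h_n\Gamma_n h_n^{-1}$ is a finite-index subgroup of $G$, and for every $\gamma\in\tilde\Gamma$ one has $\gamma x_0\in\tau^{-1}(V_n)\subseteq U_g$, so $x_0(\gamma^{-1}g)=x_0(g)$; thus $\tilde\Gamma$ is a period of $x_0$ at $g$. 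Hence $x_0$ is a Toeplitz element and $X$ a Toeplitz subshift.

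The main technical subtlety is the nesting $B_{n+1}\subseteq B_n$, which is essentially the only place where the essential-period assumption is genuinely exploited; everything else is a relatively clean combination of the inverse-limit neighborhood basis of the odometer with the closed-map property of $\tau$.
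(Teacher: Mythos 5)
Your proposal is correct in substance, but it takes a genuinely different route from the paper, whose ``proof'' is essentially two citations: the first part is declared ``direct from Lemma \ref{partitions}'', and the converse is obtained from Proposition \ref{equicontinuous-characterization} together with \cite[Theorem 5.8]{Kr10}. You instead supply a self-contained argument: the verification that $x\mapsto (g_n\Gamma_n)_n$ lands in the inverse limit (via $B_{n+1}\subseteq B_n$), the identification $\pi^{-1}(\pi(x))=\{x\}$ from the period structure, and, for the converse, a closed-map/compactness argument pulling a basic clopen neighborhood of $\tau(x_0)$ inside the cylinder $U_g$ and conjugating $\Gamma_n$ to produce a period of $x_0$ at $g$ --- in effect a direct proof of the half of Krieger's theorem the paper invokes. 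What your route buys is independence from \cite{Kr10}; what the paper's buys is brevity.

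Three small repairs. First, your justification of $B_{n+1}\subseteq B_n$ via ``essentiality of $\Gamma_n$'' does not parse: essentiality compares $x$ with its translates $gx$, not with an arbitrary $z\in B_{n+1}$. In fact no essentiality is needed there: if $\Per(z,\Gamma_{n+1},\alpha)=\Per(x,\Gamma_{n+1},\alpha)$ for all $\alpha$ and $g\in\Per(z,\Gamma_n,\alpha)$, then for every $\gamma\in\Gamma_n$ the element $\gamma g$ lies in $\Per(z,\Gamma_n,\alpha)\subseteq\Per(z,\Gamma_{n+1},\alpha)=\Per(x,\Gamma_{n+1},\alpha)$, so $x(\gamma g)=\alpha$; the reverse inclusion is symmetric. (Essentiality is what underlies Lemma \ref{partitions}, which you quote as given, so your closing remark misplaces where it enters.) Second, in the maximal-equicontinuous-factor step you state the implication in the unneeded direction: what is required is that two points in the \emph{same} $\pi$-fiber are proximal --- obtained by pushing the fiber toward the singleton fiber over $\pi(x)$ using minimality of $Y$ --- so that every equicontinuous factor, collapsing proximal (hence regionally proximal) pairs, factors through $\pi$. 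Third, in the converse you tacitly assume the symbolic extension is minimal (``minimality gives $X=\overline{O_G(x_0)}$''); this is the intended reading and matches \cite{Kr10}, since with the paper's literal definition of almost one-to-one a non-minimal extension need not be Toeplitz, but the assumption should be stated.
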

\begin{proof}
  The first part is direct from Lemma \ref{partitions}. The second part is a consequence of Proposition \ref{equicontinuous-characterization} and \cite[Theorem 5.8]{Kr10}.  
\end{proof}

 \section{Subsequences of finite index subgroups.}\label{Good-subsequences}




 Let $G$ be an infinite residually finite group.  Let $(Z,G)$ be a $G$-odometer associated to a sequence $(\Gamma_n)_{n\in\NN}$ of finite index subgroups of $G$. For every $n\in\NN$, let $C_n$ be the clopen set and $\P_n$ be the partition  introduced in Section \ref{odometers}. Let $(Y,G)$ be a minimal topological dynamical system such that there exists a factor map $\pi:Y\to Z$.   For every $y\in Y$ and $n\in\NN$, we define $t_n=t_{n,y}$ as the element in $D_n^{-1}$ such that
$$
\pi(y)\in t_{n,y}C_n.
$$
 
Observe  that $\pi(y)=(t_{n,y}\Gamma_n)_{n\in\NN}$. Thus, for every $g\in G$ and $n\in\NN$,  we have
\begin{equation}\label{1-1-eq}
gt_{n,y}=t_{n,gy}\gamma, \mbox{ for some } \gamma\in \Gamma_n.
\end{equation}

The goal of this section is to show the following result:

\begin{proposition}\label{suitable-sequence}
Let $G$ be an infinite residually finite group, and let $(Z,G)$ be  a minimal equicontinuous topological dynamical system with a free orbit.   Let $(Y,G)$ be a minimal topological dynamical system  such that there exists a factor map $\pi:Y\to Z$. Then  there exists a sequence $(\Gamma_n)_{n\in\NN}$ of finite index subgroups of $G$ such that $(Z,G)$ is conjugate to the $G$-odometer associated to $(\Gamma_n)_{n\in\NN}$ and such that  there  exists an increasing sequence 
$(D_n)_{n\in \mathbb{N}}$  of finite subsets of $G$,  such that 
\begin{itemize}
    \item[(P1)] $1_G\in D_1$ and $D_n\subseteq D_{n+1}$, for every $n\in \mathbb{N}$;
    \item[(P2)] $D_n$ contains exactly one element of each class $\{\Gamma_{n}g: g\in G\}$, for every $n\in \mathbb{N}$;
    \item[(P3)] $G=\bigcup_{n\in \mathbb{N}} D_n$;
   \item[(P4)] $D_{n+1}=\bigcup_{v\in D_{n+1}\cap \Gamma_{j}}vD_j$, for every $j\leq n$ and  $n\in \mathbb{N}$;
   \item[(P5)]  $\nu(Z_0)=0$, where
   $$
Z_0=\{(t_n\Gamma_n)_{n\in\NN}\in Z: G=\bigcup_{n\in\NN}t_n\Gamma_nD_{n-1}\}
$$
 and $\nu$ is the unique invariant probability measure of $(Z,G)$;
   \item[(P6)] For every $y\in Y$ and $n\in\NN$ there exists $h_n\in D_{n+1}\cap \Gamma_{n}t_{n,y}^{-1}$ such that 
$$h_n\in h_nD_{n} \subseteq D_{n+1}\setminus D_{n},$$ and 
$$
\lim_{n\to\infty}h_n^{-1}y_0=y,
$$
where $y_0$ is a fixed element in $\bigcap_{n\in\NN}\pi^{-1}C_n$;
\item[(P7)] $
\frac{|D_{n-1}|}{|D_n|}<\frac{1}{2^{n+1}}$,   for every  $n\in\NN$.
\end{itemize}
\end{proposition}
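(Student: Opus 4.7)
The plan is to start by representing $(Z,G)$ as a $G$-odometer with trivial intersection — by Lemma~\ref{free-orbit} and Proposition~\ref{equicontinuous-characterization}, $(Z,G)$ is conjugate to the $G$-odometer of some sequence $(\Lambda_k)_{k\in\NN}$ with $\bigcap_k\Lambda_k=\{1_G\}$ — and then to build $(\Gamma_n)$ as a subsequence of $(\Lambda_k)$ together with the sets $(D_n)$ by induction on $n$. Fix an enumeration $G=\{g_1,g_2,\dots\}$ and a point $y_0\in\bigcap_n\pi^{-1}(C_n)$. The inductive step always sets $D_{n+1}=V_nD_n$ for some transversal $V_n\subseteq\Gamma_n$ of $\Gamma_{n+1}$ in $\Gamma_n$ with $1_G\in V_n$; a direct computation then gives $D_{n+1}\cap\Gamma_j=V_nV_{n-1}\cdots V_j$, so P1, P2 and P4 hold automatically. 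Property P3 is imposed by adjoining to $V_n$ the element $g_{n+1}d^{-1}$, where $d\in D_n$ is the representative of $\Gamma_ng_{n+1}$, and P7 is achieved by passing sufficiently deep into $(\Lambda_k)$.

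The heart of the proof is realizing P6 for every $y\in Y$ with a finite amount of data per step. Fixing $y$ and $n$, density of the $G$-orbit of $y_0$ together with the continuity of $\pi$ forces, for $\varepsilon$ small enough, that the set $\{g\in G:d(gy_0,y)<\varepsilon\}$ is contained in the single coset $t_{n,y}\Gamma_n$ (agreement of the first $n$ odometer coordinates). Minimality of $(Y,G)$, combined with the fact that $Y$ is infinite (hence has no isolated points), makes this set infinite, so one may pick $g\in t_{n,y}\Gamma_n$ with $d(gy_0,y)<1/n$ and $g\notin D_nD_n^{-1}$; setting $h_n^y:=g^{-1}\in\Gamma_nt_{n,y}^{-1}$ yields $(h_n^y)^{-1}y_0$ within $1/n$ of $y$ and $h_n^yD_n\cap D_n=\emptyset$. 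Since $y\mapsto t_{n,y}$ is locally constant, a compactness argument on $Y$ reduces to finitely many points $y^{(1)},\dots,y^{(M_n)}$ with corresponding elements $h_n^{(1)},\dots,h_n^{(M_n)}$ handling every $y\in Y$. Using residual finiteness of $(\Lambda_k)$, choose $\Gamma_{n+1}=\Lambda_{k_{n+1}}$ deep enough that $[\Gamma_n:\Gamma_{n+1}]>2^{n+1}$ and all the finitely many forced elements of $D_{n+1}$ (the members of $D_n$, each $h_n^{(i)}D_n$, and $g_{n+1}$) lie in pairwise distinct $\Gamma_{n+1}$-cosets. Then $V_n$ is any $\Gamma_{n+1}$-transversal in $\Gamma_n$ containing the required elements of $\Gamma_n$, and $D_{n+1}=V_nD_n$ contains each $h_n^{(i)}D_n$ disjointly from $D_n$, which is exactly P6.

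For P5, note that $Z_0$ is $G$-invariant: if $G=\bigcup_nt_n\Gamma_nD_{n-1}$ then left-translating by $h\in G$ yields $G=hG=\bigcup_nht_n\Gamma_nD_{n-1}$, which exhibits $hz\in Z_0$. The minimal equicontinuous system $(Z,G)$ is uniquely ergodic, so $\nu$ is ergodic and it suffices to show $\nu(Z_0)<1$. Fix a fully supported probability measure $\mu$ on $G$. For each $g$ the set $A_{g,n}=\{z:g\in t_n\Gamma_nD_{n-1}\}$ depends only on the $n$-th odometer coordinate and captures at most $|D_{n-1}|$ of the $|D_n|$ possible values, giving $\nu(A_{g,n})\leq|D_{n-1}|/|D_n|<2^{-(n+1)}$ by P7. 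Fubini then produces
\[
\int_Z\mu\Bigl(\bigcup_nt_n\Gamma_nD_{n-1}\Bigr)d\nu=\sum_{g\in G}\mu(g)\,\nu\Bigl(\bigcup_nA_{g,n}\Bigr)\leq\tfrac12,
\]
so on a set of positive $\nu$-measure one has $\mu(\bigcup_nt_n\Gamma_nD_{n-1})<1$; full support of $\mu$ then forces $G\neq\bigcup_nt_n\Gamma_nD_{n-1}$, hence $z\notin Z_0$. The main expected obstacle is the combinatorial bookkeeping in the inductive step: a single choice of $\Gamma_{n+1}$ and $V_n$ must simultaneously accommodate the approximation data coming from compactness of $Y$, property P3, the growth rate in P7, and the nested-transversal structure in P4. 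Residual finiteness is precisely what makes enough room available at each step to satisfy all of these at once.
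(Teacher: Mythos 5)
Your proposal is correct in substance and follows the same broad strategy as the paper (pass deep enough into the defining chain at each stage, get (P5) from the ratio bound (P7) plus ergodicity of $\nu$, and secure (P6) by planting finitely many approximating translates of $D_n$ inside $D_{n+1}\setminus D_n$), but the implementation is genuinely different. The paper does not build the $D_n$'s by hand: it quotes the transversal-tower construction of \cite[Lemma 3]{CP14} and \cite[Lemma 2.8]{CCG23} to obtain (P1)--(P4) for the whole chain at once, and then only selects a subsequence of indices; for (P6) it invokes Lemma \ref{minimality}, which yields a single finite set $F$ such that every $y\in Y$ is $\varepsilon$-approximated by some $h^{-1}y_0$ with $h\in gF$, and it places one translate $gFD_{n_i}$ inside $D_{n_{i+1}}\setminus D_{n_i}$. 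Your finite-net substitute (nets taken inside each element of $\pi^{-1}\P_n$, using local constancy of $t_{n,\cdot}$ and the absence of isolated points in the infinite minimal system $Y$) is a valid replacement for Lemma \ref{minimality}, and your Fubini argument for (P5) is a mild variant of the paper's direct estimate $\nu(Z_0)\le \tfrac12$ followed by ergodicity. Your route buys a self-contained proof with no appeal to the cited lemmas, at the cost of heavier inductive bookkeeping; the paper's route is shorter because the tower and the uniform approximating set are ready-made. One bookkeeping point in your inductive step should be stated correctly: since every $x\in G$ factors uniquely as $x=\gamma d$ with $d\in D_n$ the representative of $\Gamma_n x$ and $\gamma=xd^{-1}\in\Gamma_n$, membership $x\in D_{n+1}=V_nD_n$ is equivalent to $\gamma\in V_n$, so what must lie in pairwise distinct $\Gamma_{n+1}$-cosets are the finitely many distinct $\Gamma_n$-components $\gamma$ of the forced elements (together with $1_G$ and $g_{n+1}d^{-1}$), not merely the forced elements of $D_{n+1}$ themselves; two forced elements with different $D_n$-parts can sit in distinct $\Gamma_{n+1}$-cosets while their $\gamma$'s collide, which would obstruct the transversal $V_n$. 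Since all these $\gamma$'s are determined before $\Gamma_{n+1}$ is chosen and $\bigcap_k\Lambda_k=\{1_G\}$, choosing $\Gamma_{n+1}$ deep enough separates them, so this is a fixable imprecision rather than a gap.
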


We need to prove some lemmas in order to show Proposition \ref{suitable-sequence}. With this purpose, in the rest of this section,  $(Z,G)$ and $(Y,G)$ are the dynamical systems given in Proposition \ref{suitable-sequence}. According to Lemma \ref{free-orbit} and Proposition \ref{equicontinuous-characterization}, the system $(Z,G)$ is conjugate to the $G$-odometer associated to a decreasing sequence $(\Gamma_n)_{n\in\NN}$ of finite index subgroups of $G$ with trivial intersection.

 It is not difficult to check that the hypothesis of normality of the finite index subgroups in \cite[Lemma 3]{CP14} and \cite[Lemma 2.8]{CCG23}  can be avoided. Thus

  we can assume that $(\Gamma_n)_{n\in\NN}$ is such that there exists a sequence   
$(D_i)_{i\in \mathbb{N}}$  of finite subsets of $G$ such that for every $i\in \mathbb{N}$
\begin{enumerate}
    \item $1_G\in D_1$ and $D_i\subseteq D_{i+1}$.
    \item $D_i$ contains exactly one element of each class $\{\Gamma_{i}g: g\in G\}$.
    \item $G=\bigcup_{i\in \mathbb{N}} D_i$.
   \item $D_{i+1}=\bigcup_{v\in D_{i+1}\cap \Gamma_{j}}vD_j$, for every $j\leq i$. 
 \end{enumerate}
\bigskip

Let $S=(n_i)_{i\in\NN}$ be an increasing sequence of positive integers. We define

$$
 Z_0(S)=\{(t_i\Gamma_i)_{i\in\NN}\in Z: G=\bigcup_{i\in\NN}t_{n_i}\Gamma_{n_i} D_{n_{i-1}}\}.
$$
This set is invariant with respect to the $G$ action on $Z$. Indeed, if $(t_i\Gamma_i)_{i\in\NN}\in Z_0(S)$ then given $h,g\in G$, there exists $i\in\NN$ such that $g^{-1}h\in t_{n_i}\Gamma_{n_i}D_{n_{i-1}}$. This implies that $h\in (gt_{n_i})\Gamma_{n_i}D_{n_{i-1}}$, from which we get that $g(t_i\Gamma_i)_{i\in\NN}\in Z_0(S)$.

\begin{lemma}\label{key-lemma}
If  $ S=(n_i)_{i\in\NN}$ is an increasing sequence of positive integers such that 
$$
\frac{|D_{n_{i-1}}|}{|D_{n_i}|}<\frac{1}{2^{i+1}}, \mbox{ for every } i\in\NN,
$$
then  $\nu(Z_0(S) )=0$.
 \end{lemma}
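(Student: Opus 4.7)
The plan is to bound the $\nu$-measure of the set of configurations in which a single fixed group element happens to be covered, and then boost the resulting strict inequality $\nu(Z_0(S)) < 1$ down to $\nu(Z_0(S)) = 0$ by invoking the ergodicity of the unique invariant measure on the equicontinuous system $(Z,G)$.

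First I will fix an arbitrary $g \in G$ (take $g = 1_G$ for concreteness) and for each $i \in \NN$ analyse the set
\[
B_i = \{ z \in Z : g \in t_{n_i,z}\Gamma_{n_i} D_{n_{i-1}} \}.
\]
Writing $g = t_{n_i,z}\gamma d$ with $\gamma \in \Gamma_{n_i}$ and $d \in D_{n_{i-1}}$ shows that $z \in B_i$ if and only if the left coset $t_{n_i,z}\Gamma_{n_i}$ lies in the collection $\{g d^{-1}\Gamma_{n_i} : d \in D_{n_{i-1}}\}$, which has cardinality at most $|D_{n_{i-1}}|$. Since every left coset of $\Gamma_{n_i}$ corresponds to a basic clopen set in $\P_{n_i}$ of $\nu$-measure $1/|D_{n_i}|$, this yields
\[
\nu(B_i) \leq \frac{|D_{n_{i-1}}|}{|D_{n_i}|} < \frac{1}{2^{i+1}}.
\]
Summing the geometric series gives $\nu\bigl(\bigcup_{i\in\NN} B_i\bigr) < 1/2$. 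By the definition of $Z_0(S)$, every $z \in Z_0(S)$ must satisfy $g \in t_{n_i,z}\Gamma_{n_i}D_{n_{i-1}}$ for at least one index $i$, so $Z_0(S) \subseteq \bigcup_i B_i$ and in particular $\nu(Z_0(S)) < 1$.

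To upgrade this strict inequality to $\nu(Z_0(S)) = 0$, I will combine two ingredients that are already available. The paragraph preceding the lemma proves that $Z_0(S)$ is $G$-invariant; moreover, $(Z,G)$ is a minimal equicontinuous Cantor system and is therefore uniquely ergodic, which forces the unique invariant measure $\nu$ to be ergodic. Noting that $Z_0(S) = \bigcap_{g \in G}\bigcup_i B_{g,i}$ is a Borel (indeed $G_\delta$) set and applying the ergodic dichotomy, we obtain $\nu(Z_0(S)) \in \{0,1\}$, and the strict inequality from the previous step rules out the value $1$.

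The only delicate point in the argument is the coset-counting in the first step; in particular, it is important to notice that the naive union bound cannot yield more than $\nu(Z_0(S)) \leq 1/2$, so the passage from this estimate to measure zero genuinely relies on the ergodic dichotomy applied to the invariant set $Z_0(S)$ itself rather than on any sharper direct estimate.
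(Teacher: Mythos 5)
Your proof is correct and follows essentially the same route as the paper: you bound the measure of the clopen event that a fixed $g$ is covered at level $i$ by $|D_{n_{i-1}}|/|D_{n_i}|<2^{-(i+1)}$, sum to get $\nu(Z_0(S))\le 1/2$, and then use invariance of $Z_0(S)$ together with ergodicity of the unique invariant measure to conclude $\nu(Z_0(S))=0$. The only cosmetic difference is that you fix a single $g$ while the paper writes $Z_0(S)$ as the intersection over all $g$ of the corresponding unions before applying the same estimate and the same ergodic dichotomy.
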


 \begin{proof}
Observe that $z=(t_i\Gamma_i)_{i\in\NN}\in Z_0(S)$ if and only if for every $g\in G$ there exists $i\in\NN$ such that $t_{n_i}^{-1}g\in \Gamma_{n_i}D_{n_i-1}$. 
This is equivalent to $g^{-1}t_{n_i}= v\gamma$, for some $\gamma\in \Gamma_{n_i}$ and $v\in D_{n_i-1}^{-1}$.
And the previous relation is satisfied if and only if 
 $g^{-1}z\in \bigcup_{d\in D_{n_{i-1}}^{-1}}dC_{n_i}$.
 This shows
\begin{equation}\label{2}
\bigcap_{g\in G}\bigcup_{j\in \NN}\bigcup_{d\in D_{n_{j-1}}^{-1}}gdC_{n_j} = Z_0(S). 
\end{equation}

 The relation (\ref{2}) implies that $Z_0(S)$ is an invariant measurable set. 

 
Let $A_j=\bigcup_{d\in D_{n_{j-1}}^{-1}}dC_{n_j}$. We have,
$$
\nu(gA_j)=\frac{|D_{n_{j-1}}|}{|D_{n_j}|}<\frac{1}{2^{j+1}}, \mbox{ for every } g\in G.
$$
Then
$$
\nu\left(\bigcup_{j\in\NN}gA_j\right)\leq \frac{1}{2}\sum_{j\in\NN}\frac{1}{2^j}=\frac{1}{2}.
$$
This implies
$$
\nu(Z_0(S))\leq \nu(\bigcap_{g\in G}\bigcup_{j\in\NN}gA_j)\leq \frac{1}{2},
$$
Since $Z_0(S)$ is a measurable invariant set and $\nu$ is ergodic, we deduce that $\nu(Z_0(S))=0$.
 \end{proof}

\begin{lemma}\label{subsequence2}
There exists a subsequence $(\Gamma_{n_i})_{i\in\NN}$ of   $(\Gamma_i)_{i\in\NN}$   for which
$$
\frac{|D_{n_{i-1}}|}{|D_{n_i}|}<\frac{1}{2^{i+1}}, \mbox{ for every } i\in\NN,
$$
and such that for every $y\in Y$ and $i\in\NN$ there exists $h_i\in D_{n_{i+1}}\cap \Gamma_{n_i}t_{n_i,y}^{-1}$ such that 
$$h_i\in h_iD_{n_i} \subseteq D_{n_{i+1}}\setminus D_{n_i},$$ and 
$$
\lim_{i\to\infty}h_i^{-1}y_0=y,
$$
where $y_0$ is an element in $\bigcap_{i\in\NN}C_i$.
 \end{lemma}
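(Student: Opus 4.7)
The plan is to construct $(n_i)_{i\in\NN}$ inductively, setting $n_1=1$ and picking $n_{i+1}$ given $n_i$. The ratio bound $|D_{n_{i-1}}|/|D_{n_i}|<2^{-(i+1)}$ is easy to enforce at each step since $|D_n|=[G:\Gamma_n]\to\infty$; the substantive task is, for each $i$, to guarantee that for every $y\in Y$ there exists $h_i=h_i(y)\in D_{n_{i+1}}\cap\Gamma_{n_i}t_{n_i,y}^{-1}$ with the tiling property $h_i D_{n_i}\subseteq D_{n_{i+1}}\setminus D_{n_i}$ and the approximation $d(h_i^{-1}y_0,y)<1/i$.

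Having fixed $n_i$, I first use continuity of $\pi$ to pick $\varepsilon_i\in(0,1/i)$ so small that any two points of $Y$ at distance less than $\varepsilon_i$ have $\pi$-images in a common basic clopen set $tC_{n_i}$ of $Z$. By compactness, fix a finite open cover $\{B(y_j,\varepsilon_i/2)\}_{j=1}^N$ of $Y$. By Lemma \ref{minimality}, there is a finite $F\subseteq G$ such that for every $y\in Y$ and $g\in G$ some $h\in gF$ satisfies $d(h^{-1}y_0,y)<\varepsilon_i/2$. For each $j$, choose $g_j'\in G\setminus(D_{n_i}D_{n_i}^{-1}F^{-1})$ (the forbidden set is finite while $G$ is infinite), and then invoke Lemma \ref{minimality} to obtain $g_j\in g_j'F$ with $d(g_j^{-1}y_0,y_j)<\varepsilon_i/2$; writing $g_j=g_j'f$ with $f\in F$ gives $g_j\notin D_{n_i}D_{n_i}^{-1}$, which secures $g_j D_{n_i}\cap D_{n_i}=\emptyset$. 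Using properties (1) and (3), the finite set $\bigcup_{j=1}^N g_j D_{n_i}$ is contained in $D_m$ for all sufficiently large $m$, so I pick $n_{i+1}$ large enough that this inclusion holds and that $|D_{n_i}|/|D_{n_{i+1}}|<2^{-(i+2)}$.

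Given any $y\in Y$, pick $j$ with $y\in B(y_j,\varepsilon_i/2)$ and set $h_i:=g_j$. Since $1_G\in D_{n_i}$, one has $h_i\in h_i D_{n_i}\subseteq D_{n_{i+1}}$; together with $h_i D_{n_i}\cap D_{n_i}=\emptyset$ this yields $h_i D_{n_i}\subseteq D_{n_{i+1}}\setminus D_{n_i}$. The triangle inequality gives $d(h_i^{-1}y_0,y)<\varepsilon_i<1/i$, so $h_i^{-1}y_0\to y$ as $i\to\infty$. Finally, the choice of $\varepsilon_i$ forces $\pi(h_i^{-1}y_0)=h_i^{-1}\pi(y_0)$ and $\pi(y)$ to lie in the same basic clopen $t_{n_i,y}C_{n_i}$; since $\pi(y_0)\in C_{n_i}$ this means $h_i^{-1}\Gamma_{n_i}=t_{n_i,y}\Gamma_{n_i}$, i.e., $h_i\in\Gamma_{n_i}t_{n_i,y}^{-1}$. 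The main technical delicacy is the rigid tiling clause: its disjointness part is what forces each approximator $g_j$ to be drawn from outside the finite bad set $D_{n_i}D_{n_i}^{-1}$, which is why Lemma \ref{minimality} (invoked with shifts $g_j'$ pushed far away in $G$) is the correct flavor of minimality to use, rather than a bare application of density of the orbit.
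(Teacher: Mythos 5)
Your proof is correct and follows essentially the same route as the paper's: fix $\varepsilon_i$ via the Lebesgue number of the partition $\pi^{-1}\P_{n_i}$ so that proximity to $y$ forces $h_i\in\Gamma_{n_i}t_{n_i,y}^{-1}$, use Lemma \ref{minimality} to find approximators inside a translate $gF$, push that translate off the finite set $D_{n_i}D_{n_i}^{-1}$, and take $n_{i+1}$ large enough (by properties (1) and (3)) to absorb $gFD_{n_i}$ and meet the ratio bound. Your extra compactness/covering step with separate $g_j$ per ball is harmless but redundant, since Lemma \ref{minimality} already gives uniformity over all $y\in Y$ with a single translate, which is exactly what the paper uses.
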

 
 \begin{proof}
 We define the subsequence $(\Gamma_{n_i})_{i\in\NN}$ as follows:

\bigskip

{\it Step 1: } Define $n_1\geq 1$ such that $|D_{n_1}|> 2$  and $k_1>n_1$ such that $d(y,y')\leq 2^{-k_1}$ implies $t_{n_1,y}=t_{n_1,y'}$. Let $\varepsilon_1=2^{-k_1}$. 

\bigskip

{\it Step 2:} Let $F\subseteq G$ be a finite set such that for every $y\in Y$ and every $g\in G$ there exists $h\in gF$ verifying $d(h^{-1}y_0,y)\leq \varepsilon_1$. Lemma \ref{minimality} ensures the existence of $F$.   
We take $n_2>n_1$ for which $|D_{n_2}|>2^{2}|D_{n_1}|$ and such that there exists $g\in G$  verifying $$ gFD_{n_1}\subseteq D_{n_2}\setminus D_{n_1}.$$ This is posible because $G$ is infinite. Given $y\in Y$ we can take $h_1\in gF$ such that  $d(h_1^{-1}y_0,y)\leq \varepsilon_1$. The choice of $\varepsilon_1$ implies that $h_1^{-1}\in t_{n_1,y}\Gamma_{n_1}$. The choice of $n_2$ implies
$$
h_1D_{n_1}\subseteq gFD_{n_1}\subseteq D_{n_2}\setminus D_{n_1}.
$$
Let $k_2>n_2$ be such that $d(y,y')\leq 2^{-k_2}$ implies $t_{n_2,y}=t_{n_2,y'}$. Let $\varepsilon_2=2^{-k_2}$.

\bigskip

{\it Step i+1:} Let $F\subseteq G$ be a finite set such that for every $y\in Y$ and every $g\in G$ there exists $h\in gF$ verifying $d(h^{-1}y_0,y)\leq \varepsilon_i$. Lemma \ref{minimality} ensures the existence of $F$. The choice of $\varepsilon_i$ implies that $h_i^{-1}\in t_{n_i,y}\Gamma_{n_i}$.  We take $n_{i+1}>n_i$ for which $|D_{n_{i+1}}|>2^{i+1}|D_{n_i}|$ and such that there exists $g\in G$ such that $$h\in gFD_{n_i}\subseteq D_{n_{i+1}}\setminus  D_{n_i}.$$ This is posible because $G$ is infinite. Given $y\in Y$ we can take $h_i\in gF$ such that  $d(h_i^{-1}y_0,y)\leq \varepsilon_i$. The choice of $\varepsilon_i$ implies that $h_i^{-1}\in t_{n_i,y}\Gamma_{n_i}$. The choice of $n_{i+1}$ implies
$$
h_iD_{n_i}\subseteq gFD_{n_i}\subseteq D_{n_2}\setminus D_{n_1}.
$$
Let $k_{i+1}>n_{i+1}$ be such that $d(y,y')\leq 2^{-k_{i+1}}$ implies $t_{n_{i+1},y}=t_{n_{i+1},y'}$.  Let $\varepsilon_{i+1}=2^{-k_{i+1}}$.
\end{proof}

\begin{proof}[Proof of Proposition \ref{suitable-sequence}]
  Proposition \ref{suitable-sequence} follows from Lemmas \cite[Lemma 3]{CP14}, \cite[Lemma 2.8]{CCG23}, \ref{key-lemma} and \ref{subsequence2}, and from the fact that $(Z,G)$ is conjugated to the $G$-odometer associated to $(\Gamma_{n_i})_{i\in\NN}$, for every increasing sequence $(n_i)_{i\in\NN}$.
  \end{proof}

 \section{Proof of Theorem \ref{theorem}.}\label{Section-borel-map}
  Let $G$ be an infinite residually finite group. In the rest of this section, $(Z,G)$ is a minimal equicontinuous Cantor system with a free orbit, and $(Y,G)$ is a minimal topological dy\-na\-mi\-cal system  such that there exists a factor map $\pi:Y\to Z$. Let $(\Gamma_n)_{n\in\NN}$ and $(D_n)_{n\in\NN}$ be sequences of finite index subgroups and finite sets of $G$, respectively, as in Proposition \ref{suitable-sequence}. For every $n\in\NN$, the clopen set $C_n\subseteq Z$ and the partition $\P_n$ of $Z$ are defined as in Section \ref{odometers}.    For every $y\in Y$, consider $t_{n,y}\in D_n^{-1}$ as in Section \ref{Good-subsequences}.
  
  Observe that for every $n\in\NN$,  the collection $\pi^{-1}\P_n=\{g\pi^{-1}C_n: g\in D_n^{-1}\}$ is a clopen partition of $Y$. Furthermore,
$g\pi^{-1}C_n\cap \pi^{-1}C_n\neq \emptyset$, if and only if $g\pi^{-1}C_n=\pi^{-1}C_n$, and if and only if $g\in\Gamma_n$.
  
  \begin{defi}
  For $z=(z_i\Gamma_i)_{i\in\NN}\in Z$ we define
  $$
  \Aper(z)=G\setminus \bigcup_{i\in\NN}z_{i}\Gamma_iD_{i-1}.
  $$
  and
  $$
  J_n(z)=z_n\Gamma_n\left(D_{n-1}\setminus \bigcup_{j=1}^{n-1}\Gamma_jD_{j-1}\right), \mbox{ for every } n\geq 1.
  $$
  If $y\in \pi^{-1}\{z\}$, then we set $\Aper(y)=\Aper(z)$ and $J_n(y)=J_n(z)$, for every $n\geq 1$.
  \end{defi}
  
  \begin{remark}
  Observe that $\Aper(y)=G\setminus \bigcup_{i\in\NN}t_{i,y}\Gamma_iD_{i-1}$. Thus, if   $y\in \bigcap_{i\in\NN}\pi^{-1}C_i$ then $\Aper(y)=G\setminus \bigcup_{i\in\NN}\Gamma_iD_{i-1}\subseteq G\setminus \bigcup_{i\in\NN}D_i= G\setminus G = \emptyset$.
  \end{remark}

\begin{lemma}\label{well-defined}
For every $z\in Z$,  the collection $\{J_n(z): n\geq 1\}$ is a partition of $G\setminus \Aper(z)$. Furthermore,
$$
g\in J_n(z) \mbox{ if and only if } n=\min\{j\geq 1: g\in z_j\Gamma_jD_{j-1}\}.
$$
\end{lemma}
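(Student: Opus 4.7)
The plan is to prove the \emph{furthermore} clause first and deduce the partition statement as a formal consequence. Write $z=(z_i\Gamma_i)_{i\in\NN}$ and keep in mind two structural facts about the ambient data: the inverse-limit compatibility forces $z_n^{-1}z_j\in\Gamma_j$ whenever $j\leq n$, and the sequence $(\Gamma_n)_{n\in\NN}$ is decreasing, so $\Gamma_n\subseteq\Gamma_j$ whenever $j\leq n$.

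The core step is the following equivalence: if $g\in z_n\Gamma_n D_{n-1}$, say $g=z_n\gamma d$ with $\gamma\in\Gamma_n$ and $d\in D_{n-1}$, then for each $j<n$,
\[
g\in z_j\Gamma_j D_{j-1}\iff d\in\Gamma_j D_{j-1}.
\]
Both implications are a one-line calculation in which the factor $z_n^{-1}z_j$ and the element $\gamma^{\pm 1}$ are absorbed into $\Gamma_j$ using the two facts above. Applied across all $j<n$, this equivalence translates the condition $d\in D_{n-1}\setminus\bigcup_{j=1}^{n-1}\Gamma_j D_{j-1}$ appearing in the definition of $J_n(z)$ into the statement that $n=\min\{j\geq 1:g\in z_j\Gamma_j D_{j-1}\}$, which is exactly the second assertion.

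The partition claim is then automatic. Disjointness of the $J_n(z)$ is clear because the minimizing index of a given $g$ is unique. For the cover, each $J_n(z)\subseteq z_n\Gamma_n D_{n-1}\subseteq G\setminus\Aper(z)$ by the very definition of $\Aper(z)$, while conversely any $g\in G\setminus\Aper(z)$ lies in some $z_i\Gamma_i D_{i-1}$, so $\{j\geq 1:g\in z_j\Gamma_j D_{j-1}\}$ is non-empty and its minimum $n$ witnesses $g\in J_n(z)$. No serious obstacle is anticipated; the only care required is to invoke inverse-limit compatibility and the nesting of $(\Gamma_n)$ simultaneously when performing the absorption step in the key equivalence above.
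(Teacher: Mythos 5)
Your proposal is correct and takes essentially the same route as the paper: the two directions of your key equivalence (absorbing $z_n^{-1}z_j$ and $\gamma^{\pm1}$ into $\Gamma_j$ via inverse-limit compatibility and the nesting of the $\Gamma_n$) are exactly the two computations in the paper's proof, which shows the minimal index $i$ gives $g\in J_i(z)$ and that $g\notin J_j(z)$ for $j>i$; you merely package them as a single biconditional from which both claims follow.
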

\begin{proof}
  Let $g\in G\setminus \Aper(z)$, $i=\min\{j\geq 1: g\in z_j\Gamma_jD_{j-1}\}$ and $d_{i-1}\in D_{i-1}$ be such that $g\in z_i\Gamma_id_{i-1}$. Suppose there exist $1\leq j \leq i-1$ and $d_{j-1}\in D_{j-1}$  such that $d_{i-1}\in \Gamma_jd_{j-1}$. Then $g\in z_i\Gamma_id_{i-1}\subseteq z_j\Gamma_jd_{j-1}$, which is a contradiction with the choice of $i$. Then we deduce that $g\in J_i(z)$. 
 
Let $j>i$ be  such that there exists $d_{j-1}\in D_{j-1}$ such that 
  $g\in z_j\Gamma_jd_{j-1}$. This implies that $d_{j-1}\in \Gamma_id_{i-1}$, which shows that $d_{j-1}\notin D_{j-1}\setminus \bigcup_{l=1}^{j-1}\Gamma_jD_{j-1}$ and $g\notin J_j(z)$.   
\end{proof}

Since  $G=\bigcup_{i\in\NN}\Gamma_iD_{i-1}$, for every $g\in G$ we  define
$$
\min(g)=k \mbox{ such that }  g\in J_k(y_0).
$$
Observe that 
$$
\min(g)=\min(\gamma g), \mbox{ for every }    \gamma\in \Gamma_j \mbox{ and } j\geq \min(g).
$$

According to Lemma \ref{well-defined}, for every $z\in Z$ and $g\in G\setminus \Aper(z)$, there exist a unique $n\geq 1$ and $d_{n-1}\in D_{n-1}$ such that $g\in J_n(z)$ and $g\in z_n\Gamma_nd_{n-1}$.

Observe that equation (\ref{1-1-eq}) implies that
\begin{equation}\label{Jotas}
  gJ_n(y)=J_n(gy) \mbox{ for every } g\in G, y\in Y, n\in\NN.  
\end{equation}


\subsection{Construction of the Borel map.}
We consider $Y^G$ equipped with the product topology, and with the shift action given by
  $$
  g(y(h))_{h\in G}=(y(g^{-1}h))_{h\in G}, \mbox{ for every } g\in G \mbox{ and } (y(h))_{h\in G}\in Y^G.
  $$

Fix $y_0\in \bigcap_{i\in\NN}\pi^{-1}C_i$.  We define $\phi:Y\to Y^G$ as follows: for every $g\in G$ and $y\in Y$,

$$
 \phi(y)(g)=\left\{\begin{array}{ll}
                   d_{i-1}^{-1}y_0 & \mbox{ if  } g\in  J_i(y)   \mbox{ and  } d_{i-1}\in D_{i-1} \mbox{ is such that }\\
                     & g\in t_{i,y}\Gamma_id_{i-1}\\
                  g^{-1}y  & \mbox{ if } g\in \Aper(y). 
                  \end{array}\right.,
 $$
 The map $\phi$ is well defined, thanks to Lemma \ref{well-defined}.
\begin{remark}
Observe that if $\pi(y)=\pi(y')$, then $\phi(y)(g)=\phi(y')(g)$, for every $g\in G\setminus \Aper(y)$. Furthermore, if $z\in Z\setminus Z_0$ then $\phi$ is injective on $\pi^{-1}\{z\}$.
\end{remark}

\begin{proposition}\label{Borel-map}
The function $\psi:Y\to Z\times Y^G$ given by $\psi(y)=(\pi(y),\phi(y))$ for every $y\in Y$,   is measurable and equivariant. Furthermore, if 
$$
X=\overline{\{g\psi(y_0) :g \in G\}} \subseteq Z\times Y^G.
$$
then 
\begin{enumerate}
\item $(X,G)$ is a minimal almost one-to-one extension of $(Z,G)$ through the projection on the first coordinate $\tau:X\to Z$. In particular,   $$\tau^{-1}\{\pi(y_0)\}=\{\psi(y_0)\}.$$
\item $\psi(Y)\subseteq X$,
\item The system $(X,G)$ is Cantor if $(Y,G)$ is.
\end{enumerate}
\end{proposition}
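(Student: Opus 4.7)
The plan is to verify measurability and equivariance of $\psi$ directly, derive the minimality and almost one-to-one character of $(X,G)$ from the singleton-fiber identity $\tau^{-1}\{\pi(y_0)\}=\{\psi(y_0)\}$, and then establish $\psi(Y)\subseteq X$ using the specific properties of the sequence $(h_n)$ from Proposition \ref{suitable-sequence}. The Cantor property will follow from general position arguments.

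For measurability and equivariance, fix $g\in G$ and partition $Y$ into the Borel sets $\{y:g\in J_i(y)\}$ for $i\in\NN$ and $\{y:g\in\Aper(y)\}$, each of which is a $\pi$-preimage of a (clopen or Borel) subset of $Z$. On each piece the map $y\mapsto\phi(y)(g)$ is either a constant in $Y$ or the continuous map $y\mapsto g^{-1}y$; hence $\psi$ is Borel. Equivariance $\psi(hy)=h\psi(y)$ is then verified case by case using $gJ_i(y)=J_i(gy)$ from \eqref{Jotas} and the equivariance of $\Aper$.

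To establish $\tau^{-1}\{\pi(y_0)\}=\{\psi(y_0)\}$, let $\zeta=\lim_n g_n\psi(y_0)\in X$ satisfy $g_n\pi(y_0)\to\pi(y_0)$; then for every $i$, $g_n\in\Gamma_i$ eventually. Since $\Aper(y_0)=\emptyset$, every $g\in G$ lies in a unique $J_i(y_0)$ with $g=\gamma d$, $\gamma\in\Gamma_i$, $d\in D_{i-1}\setminus\bigcup_{j<i}\Gamma_jD_{j-1}$. For $n$ large with $g_n\in\Gamma_i$, $g_n^{-1}g\in\Gamma_id$ and the minimality of the index $i$ is preserved, so $g_n^{-1}g\in J_i(y_0)$ and $\phi(y_0)(g_n^{-1}g)=d^{-1}y_0=\phi(y_0)(g)$. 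Passing to the limit gives $\zeta=\psi(y_0)$. Using compactness of $X$, for any neighborhood $U$ of $\psi(y_0)$ there is a neighborhood $V$ of $\pi(y_0)$ in $Z$ with $\tau^{-1}V\cap X\subseteq U$, so the syndetic return times of $\pi(y_0)$ to $V$ given by minimality of $(Z,G)$ yield syndetic returns of $\psi(y_0)$ to $U$; hence $\psi(y_0)$ is uniformly recurrent and $(X,G)$ is minimal. By equivariance of $\tau$, $\tau^{-1}\{g\pi(y_0)\}=\{g\psi(y_0)\}$ for every $g$, so the dense orbit $O_G(\psi(y_0))$ consists of singleton-fiber points and $\tau$ is almost one-to-one.

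The principal obstacle is the inclusion $\psi(Y)\subseteq X$. Given $y\in Y$, take the sequence $(h_n)$ from (P6) of Proposition \ref{suitable-sequence}, so that $h_n^{-1}y_0\to y$, and aim for $h_n^{-1}\psi(y_0)\to\psi(y)$. The first coordinate converges to $\pi(y)$ by continuity. For the second coordinate, fix $g\in G$. When $g\in J_i(y)$ with $g=t_{i,y}\gamma d$, the inclusion $h_n\in\Gamma_n t_{n,y}^{-1}$ together with $t_{n,y}^{-1}t_{i,y}\in\Gamma_i$ gives $h_n g\in\Gamma_i d$ for $n\geq i$, and the minimality of the index is preserved, so $\phi(y_0)(h_n g)=d^{-1}y_0=\phi(y)(g)$ with exact eventual equality. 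When $g\in\Aper(y)$, a parallel calculation shows $h_n g\notin\Gamma_j D_{j-1}$ for $j\leq n$, so $h_n g\in J_{k_n}(y_0)$ with $k_n>n$; writing $h_n g=\gamma_n d_n$ with $\gamma_n\in\Gamma_{k_n}$ and $d_n\in D_{k_n-1}$, one obtains $\phi(y_0)(h_n g)=d_n^{-1}y_0=g^{-1}h_n^{-1}\gamma_n y_0$. The required convergence to $\phi(y)(g)=g^{-1}y$ therefore reduces to showing $h_n^{-1}\gamma_n y_0\to y$, with $\gamma_n\in\Gamma_{k_n}$ and $k_n\to\infty$. This cannot be deduced from $h_n^{-1}y_0\to y$ alone since $\gamma_n y_0$ need not converge to $y_0$ in $Y$ when the fiber of $\pi$ over $\pi(y_0)$ is non-trivial, and is the main technical difficulty of the proof. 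The resolution exploits the structural property $h_n D_n\subseteq D_{n+1}\setminus D_n$ from (P6), which confines $h_n$ to a controlled region of $D_{n+1}$, combined with a diagonal extraction over the countable set $\Aper(y)$ so that the required limit holds for every $g\in\Aper(y)$ simultaneously. Finally, if $Y$ is a Cantor set, then $Y^G$ is Cantor as a countable product of Cantor sets, hence so is $Z\times Y^G$, and the closed infinite subset $X$ is Cantor by Lemma \ref{Cantor}.
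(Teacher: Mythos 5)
Most of your outline follows the paper's route (your measurability argument via a countable Borel partition of $Y$ according to which $J_i(y)$ or $\Aper(y)$ contains a fixed $g$ is a harmless variant of the paper's device of writing $\phi$ as a pointwise limit of continuous equivariant maps $\phi_i$, and your treatment of the singleton fibre over $\pi(y_0)$, of minimality via uniform recurrence, and of almost one-to-oneness along the dense orbit is correct). However, there is a genuine gap at the decisive step, the inclusion $\psi(Y)\subseteq X$ in the case $g\in\Aper(y)$. You correctly reduce to $\phi(y_0)(h_ng)=d_n^{-1}y_0=g^{-1}h_n^{-1}\gamma_n y_0$ with $h_ng=\gamma_n d_n$, $\gamma_n\in\Gamma_{k_n}$, $d_n\in D_{k_n-1}$, and you rightly observe that $h_n^{-1}\gamma_n y_0\to y$ does not follow from $h_n^{-1}y_0\to y$; but your proposed resolution (``diagonal extraction over the countable set $\Aper(y)$'') does not address this. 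Membership of $\psi(y)$ in the closure only requires, for each finite set of coordinates, some group element making those coordinates close, so no simultaneity issue arises; and passing to subsequences of the fixed sequence $(h_n)$ cannot produce the limit $h_n^{-1}\gamma_ny_0\to y$ if $\gamma_ny_0$ stays far from $y_0$. So as written, item (2) is unproven.

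The actual resolution --- and the entire purpose of the requirement $h_n\in h_nD_n\subseteq D_{n+1}\setminus D_n$ in (P6) --- is that $\gamma_n=1_G$. Fix $n$ large enough that $g\in D_n$, so $h_ng\in h_nD_n\subseteq D_{n+1}\setminus D_n$. You have shown $h_ng\notin\Gamma_jD_{j-1}$ for $j\le n$; the case $j=n+1$ is also excluded, because if $h_ng\in\Gamma_{n+1}d$ with $d\in D_n\subseteq D_{n+1}$, then $h_ng$ and $d$ are two elements of $D_{n+1}$ in the same right $\Gamma_{n+1}$-coset, hence equal, forcing $h_ng\in D_n$, a contradiction. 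Since $h_ng\in D_{n+1}\subseteq\Gamma_{n+2}D_{n+1}$, the minimal index is $k_n=n+2$, and the representative $d_n\in D_{n+1}$ of $h_ng$ for $\Gamma_{n+2}$ must be $h_ng$ itself (two elements of $D_{n+1}$ in the same $\Gamma_{n+2}$-coset lie in the same $\Gamma_{n+1}$-coset, hence coincide). Therefore
$$
\phi(y_0)(h_ng)=(h_ng)^{-1}y_0=g^{-1}h_n^{-1}y_0\longrightarrow g^{-1}y=\phi(y)(g),
$$
with no auxiliary limit and no extraction needed. This is exactly how the paper's Claim 1 concludes; without this observation the key inclusion $\psi(Y)\subseteq X$, and hence the proposition, is not established. (A minor additional point: for item (3), infiniteness of $X$ should be justified, e.g.\ because $\tau$ maps $X$ onto the infinite space $Z$, before invoking Lemma \ref{Cantor} together with the minimality you proved.)
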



\begin{proof} 
We will show that $\phi$ is a punctual limit of continuous and equivariant maps.
Let $i\geq 1$ and $y\in Y$. We define
\begin{align*}
   \phi_i(y)(h)=\begin{cases}
        d_{j-1}^{-1}y_0 &\mbox{ if  }  \exists j\leq i \mbox{ such that }  h\in J_j(y),  \mbox{ and } d_{j-1}\in D_{j-1} \mbox{ is such that }\\
             & h\in \Gamma_jd_{j-1} \\
        h^{-1}y,& \mbox{ otherwise.}
    \end{cases}
\end{align*}
It is clear that $\lim_{i\to\infty}\phi_i(y)=\phi(y)$.

 We will verify that $\phi_i$ is continuous.  Let $F\subseteq G$ be a finite set and $\varepsilon>0$. Let $F_1=F\cap \bigcup_{j=1}^{i}J_j(y)$.  Let $\delta>0$ be such that $d(y,y')<\delta$ implies $d(h^{-1}y,h^{-1}y')<\varepsilon$ for every $h\in F$. Let $y\in Y$. For every $y'\in \pi^{-1}C_i\cap B_{\delta}(y)$ we have $\phi_i(y)(h)=\phi_i(y')(h)$ for every $h\in F_1$ and $d(\phi_i(y)(h), \phi_i(y')(h))<\varepsilon$, for every $h\in F\setminus F_1$. This shows that $\phi_i$ is continuous. Since $\phi$ is a punctual limit of continuous functions, we deduce that $\phi$ is a Borel map.

 Now, we will check that $\phi_i$ is equivariant. Let $y\in Y$ and $g,h\in G$. From equation (\ref{Jotas}) we have 
 $$ h^{-1}g\in J_j(y) \mbox{ if and only if } g\in J_j(hy),$$ and in this case, $h^{-1}g\in t_{j,y}\Gamma_jd_{j-1}$ if and only if $g\in t_{i,hy}\Gamma_jd_{j-1}$, for $d_{j-1}\in D_{j-1}$. This implies that if $h^{-1}g\in J_j(y)$ for some $1\leq j\leq i$, then 
 $$
 h\phi_i(y)(g)=\phi_i(y)(h^{-1}g)=d_{j-1}^{-1}y_0=\phi_i(hy)(g) \mbox{ if } h^{-1}g\in J_j(y),  
 $$
and
$$
h\phi_i(y)(g)=\phi_i(y)(h^{-1}g)=g^{-1}hy=\phi_i(hy)(g) \mbox{ if not }.
$$
 This shows that $\phi_i$ is equivariant.

Since every $\phi_i$ is continuous and equivariant, we get that $\phi:Y\to Y^G$ is measurable and equivariant. This also implies that  $\psi: Y\to Z\times \phi(Y)\subseteq Z\times Y^G$ is equivariant and measurable.  

\bigskip

Observe that $\psi_i: Y\to \psi_i(Y)\subseteq Z\times Y^G$, given by $\psi_i(y)=(\pi(y), \phi_i(y))$, is a conjugacy.

\bigskip    
 
{\underline {\it Claim 1:} }
 For every $y\in Y$ there exists a sequence $(h_i)_{i\in\NN}$ in $G$    such that 
 $$
 \lim_{i\to \infty}h_i^{-1}y_0=y,
 $$
 and 
 $$
 \lim_{i\to \infty}h_i^{-1}\phi(y_0)=\phi(y).
 $$
 {\it Proof of Claim 1:}
 
 \bigskip
 

Let $y\in Y$ and  $g\notin \bigcup_{i\in\NN}t_{i,y}\Gamma_iD_{i-1}$. The property (P6) of $(\Gamma_i)_{i\in\NN}$ ensures the existence of a sequence $(h_i)_{i\in\NN}$ of elements of $G$, such that $h_i\in D_{i+1}\cap \Gamma_{i}t_{i,y}^{-1}$,
$$h_i\in h_iD_{i} \subseteq D_{i+1}\setminus D_{i},$$ and 
$$
\lim_{i\to\infty}h_i^{-1}y_0=y.
$$

 Let $i_0\in\NN$ be  such that $g\in D_{i_0}$. Thus, for every $i\geq i_0$ we have
$$
h_ig\in D_{i+1}\setminus D_i.
$$

If $h_ig\in \bigcup_{j=1}^{i+1}\Gamma_jD_{j-1}$, then $h_ig\in \bigcup_{j=1}^i\Gamma_jD_{j-1}$. In this case, there exists $1\leq j\leq i$ such that
$$
g\in h_i^{-1}\Gamma_jD_{j-1}\subseteq t_{j,y}\Gamma_jD_{j-1},
$$
which contradicts the hypothesis about $g$. Thus $h_ig\in D_{i+1}\setminus  \bigcup_{j=1}^{i+1}\Gamma_jD_{j-1}$, which implies that
$$
h_i^{-1}\phi(y_0)(g)=\phi(y_0)(h_ig)=g^{-1}h_i^{-1}y_0,
$$
and then
$$
\lim_{i\to \infty}h_i^{-1}\phi(y_0)(g)=g^{-1}y=\phi(y)(g).
$$
On the other hand, if $g\in G\setminus \Aper(y)$  let $k\in\NN$ be the smallest integer such that $g=t_{k,y}\gamma_k d_{k-1}$, for some $\gamma_k\in\Gamma_k$ and $d_{k-1}\in D_{k-1}$. Since $\lim_{i\to\infty}h_i^{-1}y_0=y$, for every sufficiently large $i$ we have $h_i^{-1}\in t_{k,y}\Gamma_k$,  which implies that $h_ig\in \Gamma_kd_{k-1}$, and $k$ the smallest integer with this property. Then we have
$$
\phi(y)(g)= d_{k-1}^{-1}y_0=  \phi(y_0)(h_ig)=h_i^{-1}\phi(y_0)(g),
$$
for every $i$ sufficiently large. This proves Claim 1.


\bigskip

 Claim 1  implies that for every $y\in Y$ there exists a sequence $(h_i)_{i\in\NN}$ of elements of $G$ such that  $\lim_{i\to\infty}h_i^{-1}(\pi(y_0),\phi(y_0))=(\pi(y), \phi(y))$. In other words, $\psi(Y)\subseteq X$.

 The map $\tau:  X \to Z$, given by the projection on the first coordinate, is almost one-to-one. Indeed, let $(z,x)\in X$ be such that $\tau(z,x)=\pi(y_0)$. Then we have $z=\pi(y_0)$. Let $(h_i(\pi(y_0),\phi(y_0)))_{i\in\NN}$ be a sequence converging to $(z,x)$. Let $g\in G$ and $k=\min(g)$. Since $\lim_{i\to \infty} h_i\pi(y_0)=\pi(y_0)$,   there exists $i_k\in\NN$ such that for every $i\geq i_k$ we have $h_i\in\Gamma_k$.  Thus $\min(h_i^{-1}g)=k$, which implies $h_i^{-1}\phi(y_0)(g)=\phi(y_0)(h_ig)=\phi(y_0)(g)=d_{k-1}^{-1}y_0$, where $d_{k-1}$ is the element in $D_{k-1}$ such that $g, h_ig\in \Gamma_kd_{k-1}$.  Since $G=\bigcup_{k\in\NN}\Gamma_kD_{k-1}$, we deduce that $\lim_{i\to\infty}h_i^{-1}\phi(y_0)=\phi(y_0)$, which implies that $(z,x)=(\pi(y_0),\phi(y_0))$.

 \medskip

 Since $X$ is equal to the closure of the orbit of $\psi(y_0)$, $(Z,G)$ is minimal and $\tau^{-1}\{\tau(\psi(y_0) \}=\{\psi(y_0)\}$, we have that $(X,G)$ is minimal (see, for example, \cite[Exercise 1.17]{Gl03}). Furthermore, if $Y$ is a Cantor set, then  $X\subseteq Z\times Y^G$  is totally disconnected. Since $\psi(Y)$ is infinite and $(X,G)$ is minimal,  Lemma \ref{Cantor} implies that $X$ is a Cantor set.

 \end{proof}

\subsection{The affine homeomorphism.}\label{Fibers}
Let $\psi:Y\to X$ be the map defined in Proposition \ref{Borel-map}. Consider $\psi^*: \M(Y)\to \M(X)$, given by $$\psi^*(\mu)(A)=\mu(\psi^{-1}(A)),$$ for every $\mu\in\M(Y)$ and every Borel set $X$. It is straightforward to check that $\psi^*$ is affine. Similarly, we define $\pi^*$ and $\tau^*$, where $\pi:Y\to Z$ and $\tau:X\to Z$ are the factor maps of Proposition \ref{Borel-map} (see Lemma \ref{sobre}).

The goal of this section is to show that $\psi^*$ is an affine bijection between $\M(Y,G)$ and $\M(X,G)$, and that when $Y$ is a Cantor set then $\psi^*$ is in addition continuous.

 \begin{lemma}\label{auxiliar-0}
  The map $\psi^*$ is one-to-one on the subset of measures of $\M(Y)$ which are supported on $\pi^{-1}(Z\setminus Z_0)$. In particular, $\psi^*$ is one-to-one on $\M(Y,G)$.   
 \end{lemma}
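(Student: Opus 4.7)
The plan is to establish that $\psi$ restricted to $E := \pi^{-1}(Z \setminus Z_0)$ is an injective Borel map between standard Borel spaces, apply the Lusin--Souslin theorem to invert it measurably on its (Borel) image, and then read off both statements. The second, ``in particular'' clause will follow at once from unique ergodicity of $(Z,G)$ together with property (P5) of Proposition \ref{suitable-sequence}.

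For injectivity, take $y,y' \in E$ with $\psi(y) = \psi(y')$. Comparing first coordinates yields $\pi(y) = \pi(y') =: z \in Z \setminus Z_0$, so $\Aper(z)$ is non-empty. Picking any $g \in \Aper(z) = \Aper(y) = \Aper(y')$, the definition of $\phi$ gives $g^{-1}y = \phi(y)(g) = \phi(y')(g) = g^{-1}y'$, hence $y = y'$. Since $\psi$ is a Borel map between standard Borel spaces (Proposition \ref{Borel-map}) and $E$ is a Borel subset of $Y$, the Lusin--Souslin theorem ensures that $\psi(E)$ is Borel in $X$ and that $\psi|_E : E \to \psi(E)$ is a Borel isomorphism. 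Moreover, if $y \in Y$ satisfies $\psi(y) = \psi(y')$ for some $y' \in E$, then $\pi(y) = \pi(y') \in Z \setminus Z_0$ forces $y \in E$, so $\psi^{-1}(\psi(A \cap E)) = A \cap E$ for every Borel $A \subseteq Y$. For $\mu \in \M(Y)$ supported on $E$ this yields
\[
\mu(A) \;=\; \mu(A \cap E) \;=\; \psi^*\mu\bigl(\psi(A \cap E)\bigr),
\]
which exhibits $\mu$ as a function of $\psi^*\mu$ and proves injectivity of $\psi^*$ on such measures.

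For the second assertion, I would use that $(Z,G)$ is uniquely ergodic, being conjugate to a $G$-odometer by Proposition \ref{equicontinuous-characterization}. By Lemma \ref{sobre}, every $\mu \in \M(Y,G)$ satisfies $\pi^*\mu = \nu$, so property (P5) gives $\mu(\pi^{-1}Z_0) = \nu(Z_0) = 0$, meaning $\mu$ is supported on $E$ and the previous paragraph applies. The main obstacle is genuinely descriptive-set-theoretic: the image $\psi(E)$ is in general neither closed nor $G$-invariant, so one cannot avoid appealing to Lusin--Souslin to guarantee that $\psi(E)$ is Borel and that the inverse of $\psi|_E$ is Borel measurable, which is precisely what is needed to pull a measure back through $\psi^*$.
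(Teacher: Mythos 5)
Your argument is correct and follows essentially the same route as the paper: injectivity of $\psi$ on $\pi^{-1}(Z\setminus Z_0)$ via a coordinate $g\in\Aper(z)$, the Lusin--Souslin theorem (the paper cites \cite[Theorem 2.8]{Gl03} for exactly this) to ensure $\psi(A\cap\pi^{-1}(Z\setminus Z_0))$ is Borel, and the identity $\psi^*\mu(\psi(A\cap\pi^{-1}(Z\setminus Z_0)))=\mu(A)$ to recover $\mu$ from $\psi^*\mu$. The ``in particular'' clause is handled just as in the paper, with your appeal to unique ergodicity of $(Z,G)$ and property (P5) merely making explicit what the paper leaves implicit.
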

\begin{proof}
 Let $\mu_1$ and $\mu_2$ be measures in $\M(Y)$  which are supported on $\pi^{-1}(Z\setminus Z_0)$. If $\mu_1\neq \mu_2\in \M(Y)$  then there exists a Borel set $A\subseteq Y$ such that $\mu_1(A)\neq \mu_2(A)$. Since the measures are supported on $\pi^{-1}(Z\setminus Z_0)$, we get $\mu_1(A)=\mu_1(A\cap \pi^{-1}(Z\setminus Z_0))$ and $\mu_2(A)=\mu_2(A\cap \pi^{-1}(Z\setminus Z_0))$. On the other hand, $\psi(A\cap \pi^{-1}(Z\setminus Z_0))$ is a Borel set of $X$, because $\psi$ is one-to-one on $\pi^{-1}(Z\setminus Z_0)$   (see, for example, \cite[Theorem 2.8]{Gl03}). This implies that $\psi^*(\mu_1)(\psi(A\cap \pi^{-1}(Z\setminus Z_0)))=\mu_1(A)$ and $\psi^*(\mu_2)(\psi(A\cap \pi^{-1}(Z\setminus Z_0)))=\mu_2(A)$, which shows that $\psi^*$ is one-to-one.   Since every measure in $\M(Y,G)$ is supported on $\pi^{-1}(Z\setminus Z_0)$, we get the result.
\end{proof}

 \begin{lemma}\label{auxiliar-1}
 Let $g\in G$ and $m\in\NN$. Then
 $$ \pi^{-1}\{z\in Z: g\in J_m(z)\} \subseteq  g\left(\bigcup_{v\in D_{m-1}^{-1}}v\pi^{-1}C_m\right)   $$
 \end{lemma}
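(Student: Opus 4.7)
The plan is to unwind the definitions of $J_m(z)$ and of the basic clopen sets $vC_m$, and observe that the inclusion is really a one-line coset computation. The complement $D_{m-1}\setminus\bigcup_{j<m}\Gamma_jD_{j-1}$ that appears in the definition of $J_m(z)$ is only there to make the family $\{J_n(z)\}_n$ disjoint (see Lemma \ref{well-defined}); for this lemma the weaker containment $J_m(z)\subseteq z_m\Gamma_m D_{m-1}$ is all that is needed.

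Concretely, I would argue as follows. Take $y\in\pi^{-1}\{z\in Z:g\in J_m(z)\}$ and write $z=\pi(y)=(z_n\Gamma_n)_{n\in\NN}$. Since $g\in J_m(z)\subseteq z_m\Gamma_m D_{m-1}$, there exist $\gamma\in\Gamma_m$ and $d\in D_{m-1}$ with $g=z_m\gamma d$. Set $v=d^{-1}$, which lies in $D_{m-1}^{-1}$. Then
\[
v^{-1}g^{-1}z_m=dg^{-1}z_m=d(z_m\gamma d)^{-1}z_m=dd^{-1}\gamma^{-1}z_m^{-1}z_m=\gamma^{-1}\in\Gamma_m,
\]
so $v^{-1}g^{-1}z_m\Gamma_m=\Gamma_m$, which by the description of the basic clopen sets in Section \ref{odometers} means $v^{-1}g^{-1}z\in C_m$, i.e.\ $z\in gvC_m$.

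Applying $\pi^{-1}$ and using equivariance of $\pi$ gives
\[
y\in\pi^{-1}(gvC_m)=gv\,\pi^{-1}C_m\subseteq g\!\left(\bigcup_{v'\in D_{m-1}^{-1}}v'\pi^{-1}C_m\right),
\]
which is the desired inclusion. There is no real obstacle: the only subtlety is noticing that $\gamma\in\Gamma_m$ implies $\gamma^{-1}\Gamma_m=\Gamma_m$ (a bare subgroup identity, not requiring normality of $\Gamma_m$), so the argument goes through for the not-necessarily-normal sequence $(\Gamma_n)_{n\in\NN}$ supplied by Proposition \ref{suitable-sequence}.
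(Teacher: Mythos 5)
Your proof is correct and is essentially the paper's argument: the paper reduces to $1_G\in J_m(g^{-1}z)$ via the equivariance $gJ_m(z)=J_m(gz)$ and then notes $g^{-1}z\in\bigcup_{v\in D_{m-1}^{-1}}vC_m$, which is exactly the coset computation $g=z_m\gamma d$, $v=d^{-1}$ that you carry out explicitly. Your write-up just fills in the details (including the pullback $\pi^{-1}(gvC_m)=gv\,\pi^{-1}C_m$) that the paper leaves implicit.
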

\begin{proof}
Observe that $g\in J_m(z)$ if and only if $1_G\in J_m(g^{-1}z)$. On the other hand, if $1_G\in J_m(g^{-1}z)$ then 
 $$
 g^{-1}z\in \bigcup_{v\in D_{m-1}^{-1}}vC_m.
 $$
\end{proof}

\begin{proposition}\label{continuity}
If $Y$ is a Cantor set, then the restriction of $\psi^*$ to $(\pi^*)^{-1}\{\nu\}$ is continuous. In particular, the restriction of $\psi^*$ to $\M(Y,G)$ is con\-ti\-nuous.
\end{proposition}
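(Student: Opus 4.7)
The plan is to approximate $\psi$ by the continuous maps $\psi_i=(\pi,\phi_i)$ built in the proof of Proposition \ref{Borel-map}, and to exploit the summability condition (P7) from Proposition \ref{suitable-sequence}. Consider a weakly convergent sequence $\mu_n\to\mu$ in $(\pi^*)^{-1}\{\nu\}$. Since $Y$ and $X$ are Cantor sets, Corollary \ref{Coro24} reduces the claim to showing $\mu_n(\psi^{-1}(V))\to\mu(\psi^{-1}(V))$ for every $V$ in a family generating the topology of $Z\times Y^G$, for instance the basic clopen cylinders $V=B\times\{x\in Y^G:x(g)\in A_g,\ g\in F\}$ with $B\subseteq Z$ clopen, $F\subseteq G$ finite, and each $A_g\subseteq Y$ clopen.

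The heart of the argument is a uniform estimate of the $\mu$-measure of the symmetric difference $\psi^{-1}(V)\,\triangle\,\psi_i^{-1}(V)$. Inspecting the definitions of $\phi$ and $\phi_i$, the values $\phi_i(y)(g)$ and $\phi(y)(g)$ agree unless $g\in J_j(y)$ for some $j>i$: both equal $g^{-1}y$ on $\Aper(y)$, and both equal the same element $d_{j-1}^{-1}y_0$ whenever $g\in J_j(y)$ with $j\leq i$. Consequently
$$
\psi^{-1}(V)\,\triangle\,\psi_i^{-1}(V)\;\subseteq\;\bigcup_{g\in F}\bigcup_{j>i}\{y\in Y:g\in J_j(y)\}.
$$
Since $t_{j,y}\Gamma_j$ is determined by $\pi(y)$, one has $\{y:g\in J_j(y)\}\subseteq\pi^{-1}\{z\in Z:g\in z_j\Gamma_j D_{j-1}\}$, and the latter set is a union of at most $|D_{j-1}|$ atoms of $\P_j$, each of $\nu$-measure $1/|D_j|$. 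Because $\pi^*\mu=\nu$, property (P7) then yields
$$
\mu\bigl(\psi^{-1}(V)\,\triangle\,\psi_i^{-1}(V)\bigr)\;\leq\;|F|\sum_{j>i}\frac{|D_{j-1}|}{|D_j|}\;\leq\;\frac{|F|}{2^{i+1}},
$$
and crucially the same bound applies to every measure in $(\pi^*)^{-1}\{\nu\}$, in particular to each $\mu_n$.

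To close the proof, given $\varepsilon>0$ I would fix $i$ large enough that $2|F|/2^{i+1}<\varepsilon/2$. Because $\psi_i$ is continuous, $\psi_i^{-1}(V)$ is clopen in $Y$, hence a continuity set, so weak convergence $\mu_n\to\mu$ gives $\mu_n(\psi_i^{-1}(V))\to\mu(\psi_i^{-1}(V))$; combining this with the uniform bound above through a standard three-$\varepsilon$ estimate produces $|\mu_n(\psi^{-1}(V))-\mu(\psi^{-1}(V))|<\varepsilon$ for all sufficiently large $n$. This proves $\psi^*\mu_n\to\psi^*\mu$ weakly, and hence $d(\psi^*\mu_n,\psi^*\mu)\to 0$ by Corollary \ref{Coro24}. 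The \emph{in particular} clause is immediate: a minimal equicontinuous Cantor system is uniquely ergodic, so $\pi^*(\M(Y,G))=\{\nu\}$ and $\M(Y,G)\subseteq(\pi^*)^{-1}\{\nu\}$. The only real obstacle is that $\psi$ itself is merely Borel; the mechanism that overcomes this is the pairing of the continuous approximations $\psi_i$ with the geometric decay (P7) of the index ratios $|D_{j-1}|/|D_j|$, which controls the approximation error uniformly over the fibre $(\pi^*)^{-1}\{\nu\}$.
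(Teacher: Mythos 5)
Your proposal is correct and follows essentially the same route as the paper: approximating $\psi$ by the continuous maps $\psi_i$, bounding where $\phi$ and $\phi_i$ can differ on finitely many coordinates via the sets $\{y: g\in J_j(y)\}$ (the paper's Lemma \ref{auxiliar-1}) together with (P7), uniformly over $(\pi^*)^{-1}\{\nu\}$, and closing with a three-$\varepsilon$ argument using continuity of $\psi_i^*$. The only cosmetic difference is that you phrase the limit step through convergence on clopen cylinders and Corollary \ref{Coro24}, while the paper phrases the same estimate as uniform convergence $\psi_l^*\to\psi^*$ in the metric $d$ on $\M(Z\times Y^G)$.
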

\begin{proof}
By the dominated convergence theorem,  $\psi^*$ is the punctual limit of the affine continuous maps $(\psi_i^*)_{i\in\NN}$. 
 
 Let $n < k\in\NN$ and $V_{k,n}=\{y\in Y: \phi(y)|_{D_n}\neq \phi_k(y)|_{D_n}\}$. Observe that
 $$
 V_{k,n}=\bigcup_{m > k}\bigcup_{g\in D_n}\pi^{-1}\{z\in Z: g\in J_m(z)\}.  
 $$
Since for every $g\in G$ and $m\in\NN$, the set $\{z\in Z: g\in J_m(z)\}$ is closed,  we have that $V_{k,n}$ is a Borel set. Furthermore, by Lemma \ref{auxiliar-1}, we have
$$
 V_{k,n}\subseteq \bigcup_{m >k}\bigcup_{g\in D_n}g \left(\bigcup_{v\in D_{m-1}^{-1}}v\pi^{-1}C_m\right).
 $$
 Thus, for every probability measure $\mu\in (\pi^*)^{-1}\{\nu\}$, we have
 $$
 \mu(V_{k,n})\leq |D_n|\sum_{m>k}\frac{|D_{m-1}|}{|D_m|}.
 $$
 From (P7) in Proposition \ref{suitable-sequence}, we get $\frac{|D_{m-1}|}{|D_m|}\leq \frac{1}{2^{m+1}}$, and then
 $$
 \mu(V_{k,n})\leq |D_n|\frac{1}{2^{k}}.
 $$
 Since $\phi(y)|_{D_n}=\phi_k(y)|_{D_n}$ implies that $\phi(y)|_{D_n}=\phi_l(y)|_{D_n}$, for every $l\geq k$, we have
 \begin{eqnarray*}
 \mu(\{y\in Y: \phi(y)|_{D_n}=\phi_l(y)|_{D_n} \mbox{ for every } l\geq k\}) &= & 1-\mu(V_{k,n}) \\
    &\geq &1-|D_n|\frac{1}{2^k}.
 \end{eqnarray*}


Let $\varepsilon >0$ and $n\in\NN$. Let $k_n>n$ be such that $|D_n|\frac{1}{2^{k_n}}<\frac{\varepsilon}{2}$. 
 
 Let $C\subseteq Z$ and $C_g\subseteq Y$ be open sets, for every $g\in D_n$. The set $$K=C\times \prod_{g\in D_n}C_g\times Y^{G\setminus D_n}$$ is an open subset of $Z\times Y^G$. We have
 $$
 \psi^{-1}K=\pi^{-1}C\cap\{y\in Y: \phi(y)(g)\in C_g, \mbox{ for every } g\in D_n\}
 $$
 and
 $$
 \psi_l^{-1}K=\pi^{-1}C\cap\{y\in Y: \phi_l(y)(g)\in C_g, \mbox{ for every } g\in D_n\}.
 $$
Observe that
$$
\psi^{-1}K\cap V_{k,n}^c=\psi_l^{-1}K\cap V_{k,n}^c, \mbox{ for every }l\geq k_n.
$$
This implies 
\begin{eqnarray*}
\mu(\psi^{-1}K) & = & \mu(\psi_l^{-1}K\cap V_{k,n}^c)+\mu(\psi^{-1}K\setminus V_{k,n}^c)\\
                & =  & \mu(\psi_l^{-1}K)-\mu(\psi_l^{-1}K\setminus V_{k,n}^c)+ \mu(\psi^{-1}K\setminus V_{k,n}^c)\\
\end{eqnarray*}
From this we get
\begin{equation}\label{metric-1}
|\mu(\psi^{-1}K)-\mu(\psi_l^{-1}K)|\leq \varepsilon \mbox{ for every } l\geq k_n.
\end{equation}

Let $(\mathcal{Q}_n)_{n\in\NN}$ be a sequence of nested clopen partitions of $Y$ that spans its topology. For every $n\in\NN$, we define
$$
\mathcal{K}_n=\{vC_n\times \prod_{g\in D_n}C_g\times Y^{G\setminus D_n}: v\in D_n^{-1}, C_g\in \mathcal{Q}_n, \mbox{ for every } g\in D_n\}.
$$
The collection $(\mathcal{K}_n)_{n\in\NN}$ is a sequence of nested clopen partitions that spans the topology of $Z\times Y^G$.

For every $\mu$ and $\mu'$ in $\M(Z\times Y^G)$, consider
$$
d(\mu,\mu')=\sum_{n=1}^{\infty}\sum_{K\in \mathcal{K}_n}\frac{1}{2^ns_n}|\mu(K)-\mu'(K)|, 
$$
where $s_n$ is the number of elements of $\mathcal{K}_n$, for every $n\in\NN$. This is the metric defined in (\ref{newmetric}), which generates the weak topology on $\M(Z\times Y^G)$ (see Corollary \ref{Coro24}).


Let $\varepsilon>0$ and  $m\in\NN$ be such that $\sum_{n>m}\frac{1}{2^n}<\frac{\varepsilon}{2}$. From equation (\ref{metric-1}), there exists $k_m\in\NN$ such that for every $l\geq k_m$ and every measure $\mu\in (\pi^*)^{-1}\{\nu\}$, we have
$$
|\mu(\psi^{-1}K)-\mu(\psi_l^{-1}K)|\leq \frac{\varepsilon}{2}\left(\sum_{n=1}^m\frac{1}{2^n} \right)^{-1} \mbox{ for every  } K\in \mathcal{K}_j, 1\leq j\leq m.
$$
This implies that
\begin{align}\label{metric-2}
d(\psi^*\mu, \psi^*_l\mu)\leq \frac{\varepsilon}{2}+\sum_{n>m}\frac{1}{2^n}<\varepsilon. 
\end{align}

Let $(\mu_i)_{i\in\NN}$ be a sequence of measures in $(\pi^*)^{-1}\{\nu\}$  that weakly converges to $\mu \in (\pi^*)^{-1}\{\nu\}$. We have
\begin{eqnarray}\label{eq-n}
d(\psi^*(\mu_i),\psi^*(\mu)) &\leq & d(\psi^*(\mu_i),\psi_k^*(\mu_i))+ d(\psi_k^*(\mu_i), \psi_k^*(\mu))+ d(\psi_k^*(\mu),\psi^*(\mu)), 
\end{eqnarray}
for every $i, k\in\NN$. Equation (\ref{metric-2}) implies there exists $k\in\NN$ such that 
\begin{equation}\label{eq-nn}
d(\psi^*(\mu_i),\psi_k^*(\mu_i))<\frac{\varepsilon}{3} \hspace{5mm} \mbox{ and } \hspace{5mm} d(\psi_k^*(\mu),\psi^*(\mu)) <\frac{\varepsilon}{3}
\end{equation}

By the continuity of $\psi^*_{k}$,  the sequence  $(\psi^*_{k}(\mu_i))_{i\in\NN}$ weakly converges to $\psi^*_k(\mu)$.  Thus, there exists $i_0\in\NN$ such that    
\begin{equation}\label{eq-nnn}
d(\psi^*_{k}(\mu_i), \psi^*_{k}(\mu))<\frac{\varepsilon}{3} \mbox{ for every } i\geq i_0.
\end{equation}
By equations (\ref{eq-n}), (\ref{eq-nn}) and (\ref{eq-nnn}), we get 
$$
d(\psi^*(\mu_i), \psi^*(\mu))<\varepsilon \mbox{ for every } i\geq i_0
$$
This shows that the restriction of $\psi^*$ to $(\pi^*)^{-1}\{\nu\}$ is continuous. Since $\M(Y,G)\subseteq (\pi^*)^{-1}\{\nu\}$, in particular, the restriction of $\psi^*$ to $\M(Y,G)$ is continuous.
\end{proof}

\begin{lemma} \label{simple}
Let $z\in Z$ and $(z,x)\in \tau^{-1}\{z\}$.
\begin{enumerate}
\item If $g\in G\setminus \Aper(z)$ and $k\geq 1$ is such that $g\in J_k(z)$, then   
$$x(g)=d_{k-1}^{-1}y_0, 
$$
where  $d_{k-1}\in D_{k-1}$ is such that $g\in z_k\Gamma_kd_{k-1}$.
\item If $g\in \Aper(z)$, then
$$
\pi(x(g))=g^{-1}z.
$$
\end{enumerate}
\end{lemma}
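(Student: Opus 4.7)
The plan is to exploit the construction $X = \overline{\{g\psi(y_0) : g \in G\}}$. Any $(z,x) \in X$ with first coordinate $z$ arises as $(z,x) = \lim_i h_i \psi(y_0)$ for some net (or sequence, by metrizability) $(h_i) \subseteq G$, giving $h_i\pi(y_0) \to z$ in $Z$ and $h_i\phi(y_0) \to x$ pointwise in $Y^G$. In particular, for each $g \in G$,
\[
x(g) = \lim_i (h_i\phi(y_0))(g) = \lim_i \phi(y_0)(h_i^{-1}g),
\]
so both parts reduce to determining what $\phi(y_0)(h_i^{-1}g)$ equals for large $i$. The single driving fact is that $h_i\pi(y_0)\to z$ forces, for every fixed $n$, the coset equality $h_i\Gamma_n = z_n\Gamma_n$ eventually.

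For part (1), assume $g \in J_k(z)$, so $g \in z_k\Gamma_k d_{k-1}$ with $k$ minimal (Lemma \ref{well-defined}) and $d_{k-1}\in D_{k-1}$; the nesting $\Gamma_k\subseteq \Gamma_{k-1}$ together with $D_{k-1}$ being a transversal of $\Gamma_{k-1}\backslash G$ gives uniqueness of $d_{k-1}$. For $i$ large enough that $h_i\Gamma_j = z_j\Gamma_j$ for $j = 1,\dots,k$, the condition $h_i^{-1}g \in \Gamma_j D_{j-1}$ is equivalent to $g \in z_j\Gamma_j D_{j-1}$; hence $h_i^{-1}g \in \Gamma_k d_{k-1}$ and $h_i^{-1}g \notin \Gamma_j D_{j-1}$ for $j<k$. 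Since $\Aper(y_0)=\emptyset$, this means $h_i^{-1}g \in J_k(y_0)$, and the unique transversal element witnessing membership is the same $d_{k-1}$. By the definition of $\phi$, $\phi(y_0)(h_i^{-1}g) = d_{k-1}^{-1}y_0$ for all large $i$, and the claimed identity $x(g) = d_{k-1}^{-1}y_0$ follows.

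For part (2), assume $g \in \Aper(z)$. The same coset argument shows $h_i^{-1}g \notin \Gamma_n D_{n-1}$ for every fixed $n$ and all large $i$. But $\Aper(y_0)=\emptyset$, so $h_i^{-1}g \in J_{k_i}(y_0)$ for some $k_i$, which must therefore satisfy $k_i\to\infty$. Write $h_i^{-1}g = \gamma_i d_i$ with $\gamma_i \in \Gamma_{k_i}$ and $d_i \in D_{k_i-1}$; then $d_i^{-1} = g^{-1}h_i\gamma_i$ and $\phi(y_0)(h_i^{-1}g) = d_i^{-1}y_0$. Now fix $n$: for $i$ large with $k_i \geq n$ and $h_i\Gamma_n = z_n\Gamma_n$, the nesting $\Gamma_{k_i}\subseteq\Gamma_n$ yields $\gamma_i \in \Gamma_n$, hence $d_i^{-1}\Gamma_n = g^{-1}h_i\Gamma_n = g^{-1}z_n\Gamma_n$, which is exactly the $n$-th coordinate of $g^{-1}z$. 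Thus $d_i^{-1}\pi(y_0) \to g^{-1}z$ in $Z$, and by continuity of $\pi$,
\[
\pi(x(g)) = \lim_i \pi\bigl(\phi(y_0)(h_i^{-1}g)\bigr) = \lim_i d_i^{-1}\pi(y_0) = g^{-1}z.
\]

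The main obstacle is the non-normality of the $\Gamma_n$'s, which forces me to argue only through the nesting $\Gamma_{k_i}\subseteq \Gamma_n$ and the transversal property of each $D_n$, rather than appealing to any quotient group structure. In part (2), the technical point is that a single index $i$ must simultaneously handle both the growing level $k_i\to\infty$ (needed so that $\gamma_i$ can be absorbed modulo $\Gamma_n$) and the fixed truncation level $n$ at which we check convergence in $Z$; luckily these are compatible because the latter requires only "$i$ large depending on $n$", which is exactly the meaning of coordinatewise convergence in the inverse limit $Z$.
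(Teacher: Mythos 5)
Your proposal is correct and follows essentially the same route as the paper's proof: approximate $(z,x)$ by $h_i\psi(y_0)$, use that $h_i\in z_n\Gamma_n$ eventually for each fixed $n$, and evaluate $\phi(y_0)(h_i^{-1}g)$ via the definition of $\phi$ — in case (1) the value stabilizes at $d_{k-1}^{-1}y_0$, and in case (2) the levels $k_i\to\infty$ and the computation $d_i^{-1}\in g^{-1}z_n\Gamma_n$ gives $\pi(x(g))=g^{-1}z$ by continuity of $\pi$. No gaps; your write-up just makes the eventual-coset bookkeeping slightly more explicit than the paper does.
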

\begin{proof}
Let $(h_i(\pi(y_0), \phi(y_0)))_{i\in\NN}$ be a sequence converging to $(z,x)$. For every $k\in \NN$, there exists $i_k\in\NN$ such that $h_i\in z_k\Gamma_k$, for every $i\geq i_k$.

\medskip

1. Suppose that $g\in G\setminus \Aper(z)$. Let $k=\min\{j\geq 1: g\in z_j\Gamma_jD_{j-1}\}$ and $d_{k-1}\in D_{k-1}$ be such that $g\in z_k\Gamma_kd_{k-1}$. 
 This implies that for every $i\geq i_k$, 
$$
h_i\phi(y_0)(g)=\phi(y_0)(h_i^{-1}g)=d_{k-1}^{-1}y_0,
$$
which shows that $x(g)=d_{k-1}^{-1}y_0.$

\medskip 

2. Suppose that $g\in \Aper(z)$. For every $i\in\NN$, let $\min(h_i^{-1}g)=l_i$ and $d_{l_i-1}\in D_{l_i-1}$ be such that $h_i^{-1}g\in \Gamma_{l_i}d_{l_i-1}$. We have $h_iy_0(g)=d_{l_i-1}^{-1}y_0$. Since $g\in \Aper(z)$, for every $k\in \NN$ there exists $j_k\in\NN$ such that $l_i>k$, for each $i\geq j_k$. Thus,  for every $i\geq \max\{i_k,j_k\}$ we have
$$h_i\phi(y_0)(g)=d_{l_i-1}^{-1}y_0\in (g^{-1}z_k\Gamma_k\Gamma_{l_i})y_0=(g^{-1}z_k\Gamma_k)y_0,$$
which implies that 
$$
\pi(x(g))=\lim_{i\to\infty}h_i\pi(\phi(y_0)(g))=g^{-1}z.
$$
\end{proof}

\begin{remark}
 {\rm Lemma \ref{simple} implies that the elements in $\tau^{-1}\{z\}$ coincide on the coordinates $g\in G\setminus \Aper(z)$.  }   
\end{remark}

\begin{lemma}\label{convergencia}
Let $\mu$ be an invariant probability measure of $(X,G)$. Then there exists a sequence $(\mu_n)_{n\in\NN}$ of probability measures in $\M(Y)$ such that
\begin{enumerate}
\item There exists $\omega \in (\pi^*)^{-1}\{\nu\}\subseteq \M(Y)$ such that  $\lim_{n\to\infty}\mu_n=\omega$,  
\item $\lim_{n\to\infty}\psi^*(\mu_n)=\mu.$
\end{enumerate}
\end{lemma}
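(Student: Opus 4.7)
The plan is to construct $\mu_n$ as Borel pushforwards of $\mu$ via measurable sections $\sigma_n \colon X \to Y$ that evaluate $x \in X$ at a suitable $z$-dependent element of $\Aper(z)$; condition (1) will then follow directly from Lemma \ref{simple}(2), whereas condition (2) reduces to a convergence statement for cylinder statistics which is the main technical ingredient.

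Since $(Z, G)$ is uniquely ergodic and $\tau \colon X \to Z$ is a factor map, $\tau^* \mu = \nu$, so property (P5) of Proposition \ref{suitable-sequence} yields $\mu(\tau^{-1}(Z_0)) = \nu(Z_0) = 0$. Fix an enumeration $G = \{g_1, g_2, \ldots\}$ and for each $n \in \NN$ and $z \in Z \setminus Z_0$ let $s_n(z)$ denote the element $g_k$ of smallest index satisfying $g_k \in \Aper(z)$ and $g_k \notin D_n$; on $Z_0$ set $s_n(z) := 1_G$. Each level set $\{z : s_n(z) = g_k\}$ is a Borel subset of $Z$ (a finite Boolean combination of the sets $\{z : g_i \in \Aper(z)\}$, each of which is a countable intersection of clopens), so $s_n$ is Borel, whence the map $\sigma_n \colon X \to Y$ defined by $\sigma_n(z, x) := s_n(z) \cdot x(s_n(z))$ is Borel. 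Set $\mu_n := \sigma_n^* \mu \in \M(Y)$.

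For (1): since $s_n(z) \in \Aper(z)$ for $\mu$-almost every $(z, x)$, Lemma \ref{simple}(2) gives $\pi(x(s_n(z))) = s_n(z)^{-1} z$, so $\pi \circ \sigma_n = \tau$ $\mu$-almost everywhere and hence $\pi^* \mu_n = \tau^* \mu = \nu$. Weak compactness of $\M(Y)$ provides, after passing to a subsequence, a limit $\omega \in \M(Y)$; continuity of $\pi^*$ (Lemma \ref{sobre}) then forces $\pi^* \omega = \nu$, so $\omega \in (\pi^*)^{-1}\{\nu\}$.

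For (2), note $\psi^* \mu_n = (\psi \circ \sigma_n)^* \mu$; it suffices to show that $\int h \circ \psi \circ \sigma_n \, d\mu \to \int h \, d\mu$ for every continuous cylinder function $h$ on $Z \times Y^G$ depending on $Z$ and on the $Y$-coordinates indexed by a finite $F \subseteq G$. Fix such an $h$ and choose $n$ large enough that $F \subseteq D_n$, which forces $s_n(z) \notin F$. For $\mu$-almost every $(z, x)$, Lemma \ref{simple}(1) shows that the coordinates of $\psi(\sigma_n(z, x))$ indexed by $g \in F \setminus \Aper(z)$ agree with $x(g)$ (both being determined by $z$), while the coordinates indexed by $g \in F \cap \Aper(z)$ take the form $g^{-1} s_n(z) \cdot x(s_n(z))$. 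The main obstacle is to prove that the joint distribution of these altered coordinates under $\mu$ converges, as $n \to \infty$, to the joint distribution of $(x(g))_{g \in F \cap \Aper(z)}$ under $\mu$; this uses the disintegration $\mu = \int \mu_z \, d\nu(z)$ together with its $G$-equivariance (the pushforward by $g$ maps $\mu_z$ to $\mu_{gz}$ for $\nu$-a.e.\ $z$), the evaluation identity $\rho_{z, g} = \rho_{h^{-1}z, h^{-1}g}$ for marginal fiber measures derived from invariance, and the fact that $s_n(z)$ leaves every finite subset of $G$, allowing one to match the limiting cylinder statistics with those of $\mu$ and thus deliver $\psi^* \mu_n \to \mu$ via dominated convergence on cylinder functions.
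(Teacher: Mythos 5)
Your construction of $\mu_n$ as pushforwards of $\mu$ under the sections $\sigma_n(z,x)=s_n(z)\,x(s_n(z))$ is a genuinely different route from the paper (which instead shows $\M(X)\subseteq\overline{\psi^*(\M(Y))}$ via Dirac measures, averages the approximating measures over $D_n$ to force the $\pi$-marginal towards $\nu$, and diagonalizes), and part (1) of your argument is essentially fine modulo one unaddressed point: you need $s_n(z)$ to exist for $\nu$-a.e.\ $z$, i.e.\ $\Aper(z)\setminus D_n\neq\emptyset$ a.e., which requires $\Aper(z)$ to be infinite almost surely; property (P5) only gives $\Aper(z)\neq\emptyset$ a.e., so an extra (fixable, but missing) argument is needed.

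The genuine gap is step (2). Unwinding your construction, $\psi(\sigma_n(z,x))$ keeps the coordinates $x(g)$ for $g\notin\Aper(z)$ (by Lemma \ref{simple}(1)) but replaces every aperiodic coordinate $x(g)$, $g\in\Aper(z)$, by $g^{-1}s_n(z)x(s_n(z))$, i.e.\ it makes all aperiodic coordinates coherent with the single point $s_n(z)x(s_n(z))$. Thus $\psi^*\mu_n\to\mu$ asserts that, in the limit of cylinder statistics, the aperiodic coordinates of a $\mu$-typical point $(z,x)$ are jointly of the form $(g^{-1}y)_{g\in\Aper(z)}$ for one $y$; this is exactly the content of Proposition \ref{supported} (on $\psi(Y)$ one has $\psi\circ\sigma_n=\mathrm{id}$, so (2) is trivial once that proposition is known), which is the very result this lemma is later used to prove. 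The tools you cite cannot deliver it: the equivariant disintegration and the identity for marginal fiber measures only control the law of each single coordinate $x(g)$ (via unique ergodicity of $(Z,G)$), not the joint law across several aperiodic coordinates, and invariance of $\mu$ under a fixed $h\in G$ merely relates the law of $x$ at shifted coordinates — e.g.\ for $F=\{1_G\}$ it turns $\int u(s\,x(s))\,d\mu$ into $\int u(s\,x(1_G))\,d\mu$-type expressions, with no mechanism forcing convergence to $\int u(x(1_G))\,d\mu$ along the $z$-dependent sequence $s_n(z)$. Nothing quantitative such as (P7) enters your sketch, whereas ruling out ``incoherent'' invariant measures is precisely where the paper has to work (and does so in Proposition \ref{supported} using this lemma). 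So the final paragraph is a restatement of what must be proven rather than a proof, and as written the argument is circular in spirit; you would need either to supply an independent proof of that joint-coherence convergence or to switch to an approximation scheme like the paper's, which sidesteps the issue by approximating $\mu$ by $\psi^*$ of measures on $Y$ and only then correcting the $\pi$-marginal by averaging over $D_n$.
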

\begin{proof}
   Let $x\in X$ and $(y_i)_{i\in\NN}$ be a sequence of elements in $Y$ such that $x =\lim_{i\to\infty}\psi(y_i)$. Since the map from $X$ to $\M(X)$, that identifies every point with its respective Dirac measure, is continuous, we have that $\lim_{i\to\infty}\delta_{\psi(y_i)}=\delta_x$. It is easy to verify that $\delta_{\psi(y)}=\psi^*(\delta_{y})$, for every $y\in Y$. From this we get that $\delta_x\in \overline{\psi^*(\M(Y))}$. Since $\overline{\psi^{*}(\M(Y))}$ is convex and the set of convex combination of the elements in $\{ \delta_x: x\in X\}$ is  
dense in $\M(X)$,   we have $\M(X)\subseteq \overline{\psi^*(\M(Y))}$.  This implies there exists a sequence $(\lambda_n)_{n\in\NN}$  of elements in $\M(Y)$ such that $\lim_{n\to\infty}\psi^*(\lambda_n)=\mu$.

For every $n, i\in\NN$, consider the measure
$$
\mu_{n,i}=\frac{1}{|D_n|}\sum_{g\in D_n}g\lambda_i,
$$
where $g\lambda_i$ is the measure given by $g\lambda_i(A)=\lambda_i(g^{-1}A)$, for every Borel set $A\subseteq Y$. Since $hC_n=gC_n$ for every $h\in g\Gamma_n$, we have 
$$ 
\mu_{n,i}(\pi^{-1}(gC_n))=\mu_{n,i}(\pi^{-1}(C_n)), \mbox{ for every } g\in G.
$$
 
Moreover, 
\begin{equation}\label{convergencia-1}
\mu_{n,i}(\pi^{-1}(C_m))=\mu_{n,i}(\pi^{-1}(gC_m))  \mbox{ for each } m\leq n \mbox{ and } g\in G.
\end{equation}
Indeed,  $C_m$ is a disjoint union of translation of $C_n$, more precisely, $C_m=\bigcup_{\gamma\in \Gamma_m\cap D_n}\gamma C_n$. 
Observe that if $d,d'\in D_n$ are such that $gdC_n=gd'C_n$, then $d=d'$. This implies
 $\{g\gamma C_n\colon \gamma\in \Gamma_m\cap D_n\}$ is a collection of disjoint sets.
Thus, 
\begin{align*}
    \mu_{n,i}(\pi^{-1}(gC_m))&=\sum_{\gamma\in \Gamma_m\cap D_n}\mu_{n,i}(\pi^{-1}(g\gamma C_n))=|\Gamma_m\cap D_n|\mu_{n,i}(\pi^{-1}(C_n))\\
    &=\sum_{\gamma\in \Gamma_m\cap D_n}\mu_{n,i}(\pi^{-1}(\gamma C_n))=\mu_{n,i}(\pi^{-1}(C_m)).
\end{align*}

Furthermore, since $\mu$ is invariant and the action of $G$ on $\M(Y)$ is continuous, we get
$$
\lim_{i\to \infty}\psi^*(\mu_{n,i})=\mu.
$$
 

Let $(\varepsilon_n)_{n\in\NN}$ be a decreasing sequence of positive numbers going to $0$. Let $(i_n)_{n\in\NN}$ be an increasing sequence of positive integers such that
 
$$
d(\psi^*(\mu_{n,i_n}), \mu)<\varepsilon_n.
$$

Let $\omega\in \M(Y)$ be an accumulation point of $(\mu_{n,i_n})_{n\in\NN}$. Equation (\ref{convergencia-1}) implies that for every $g\in G$ and $n\in\NN$,
$$
\omega(\pi^{-1}(gC_n))=\omega(\pi^{-1}(C_n)),
$$
in other words, $\omega \in (\pi^*)^{-1}\{\nu\}$. Taking $(\mu_n)_{n\in\NN}$ as a subsequence of $(\mu_{n,i_n})_{n\in\NN}$ that converges to $\omega$, we get the result.
\end{proof}

\begin{proposition}\label{supported}
  If $\mu$ is an invariant probability measure of $(X,G)$, then $\mu$ is supported on $\psi(Y)$.  
\end{proposition}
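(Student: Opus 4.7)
My plan is to show $\mu(X\setminus\psi(Y))=0$ in two stages: first I reduce to showing $\mu$ concentrates on $\tau^{-1}(Z\setminus Z_0)$ using unique ergodicity, and then I establish the pointwise inclusion $\tau^{-1}(Z\setminus Z_0)\subseteq\psi(Y)$.

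For the reduction, I would use that $(Z,G)$, being minimal equicontinuous, is uniquely ergodic with invariant measure $\nu$. Lemma \ref{sobre} ensures $\tau^{*}\mu$ is an invariant probability of $(Z,G)$, so $\tau^{*}\mu=\nu$. Property (P5) of Proposition \ref{suitable-sequence} gives $\nu(Z_0)=0$, whence $\mu(\tau^{-1}(Z_0))=0$, and therefore $\mu$ is concentrated on $\tau^{-1}(Z\setminus Z_0)$.

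For the inclusion, fix $(z,x)\in X$ with $z\in Z\setminus Z_0$, so $\Aper(z)\neq\emptyset$. Because $X=\overline{G\psi(y_0)}$, I pick $(h_n)\subseteq G$ with $h_n\psi(y_0)\to(z,x)$; by compactness of $Y$, a subsequence satisfies $h_ny_0\to y\in Y$, and continuity of $\pi$ gives $\pi(y)=z$. The claim is $\psi(y)=(z,x)$. For $g\in G\setminus\Aper(z)$, both $\phi(y)(g)$ and $x(g)$ are prescribed by $z$ alone via Lemma \ref{simple}(1), so they agree. For $g\in\Aper(z)$, the same type of computation carried out in Claim~1 of the proof of Proposition \ref{Borel-map} yields
\[
x(g)=\lim_{n\to\infty} g^{-1}h_n\gamma_n y_0, \qquad \gamma_n\in\Gamma_{k_n},\ k_n=\min(h_n^{-1}g,y_0)\to\infty,
\]
while directly $\phi(y)(g)=g^{-1}y=\lim_n g^{-1}h_n y_0$.

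The main obstacle will be the identity $\lim h_n\gamma_n y_0=\lim h_n y_0$ on $\Aper(z)$, which is \emph{not} automatic, since $(Y,G)$ need not be equicontinuous and the fibers of $\pi$ can be nontrivial: a~priori, $h_n\gamma_n y_0$ and $h_n y_0$ could converge to different points within $\pi^{-1}\{z\}$. I would resolve this by combining the invariance of $\mu$ with the disintegration $\mu=\int\mu_z\,d\nu(z)$ along $\tau$ and the equivariance relation $h\mu_z=\mu_{hz}$: since $\gamma_n\pi(y_0)\in C_{k_n}$ shrinks to $\pi(y_0)$, one should be able to show that $\mu_z$-almost every $(z,x)$ in the fiber admits a consistent lift $y\in\pi^{-1}\{z\}$ independent of the choice of $g\in\Aper(z)$, so that the desired equality holds $\mu$-almost everywhere. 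Alternatively, one can mimic the Cauchy-type estimates of Proposition \ref{continuity} (adapted to allow non-Cantor $Y$) applied to the approximating sequence $\psi^{*}(\mu_n)\to\mu$ of Lemma \ref{convergencia}, each of which satisfies $\psi^{*}(\mu_n)(\psi(Y))=1$.
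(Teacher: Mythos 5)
Your first stage is fine: $\tau^*\mu=\nu$ and (P5) indeed give $\mu(\tau^{-1}(Z_0))=0$. The gap is the second stage. The pointwise inclusion $\tau^{-1}(Z\setminus Z_0)\subseteq\psi(Y)$ is not available (and should be expected to fail in general): for $(z,x)\in X$ with $\Aper(z)\neq\emptyset$, Lemma \ref{simple}(2) only pins down $\pi(x(g))=g^{-1}z$, so each aperiodic coordinate satisfies $x(g)=g^{-1}y_g$ for some $y_g\in\pi^{-1}\{z\}$, but nothing forces the points $y_g$ to be the same for different $g\in\Aper(z)$. Your own computation shows the problem: $x(g)=\lim_n g^{-1}h_n\gamma_n y_0$ with $\gamma_n\in\Gamma_{k_n}$ depending on $g$, and since $\pi$ may have nontrivial fibers, $\lim_n h_n\gamma_n y_0$ can differ from $\lim_n h_n y_0$ and can differ from one $g$ to another. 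So membership in $\psi(Y)$ is genuinely a measure-theoretic statement, not a topological one, and the limit point $y$ you extract need not satisfy $\psi(y)=(z,x)$. Your two proposed repairs do not close this: the disintegration/equivariance idea supplies no mechanism producing a single consistent lift on $\mu_z$-almost every fiber point (this is precisely what has to be proved), and the observation that $\psi^*(\mu_n)(\psi(Y))=1$ does not pass to the weak limit, because $\psi(Y)$ is neither closed nor open.

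What is actually needed (and what the paper does) is a quantitative covering argument for the ``inconsistent'' points. Write $X\setminus\psi(Y)=\bigcup_n X_n$, where $X_n$ consists of those $(z,x)$ whose coordinates indexed by $D_n$ already exhibit two aperiodic positions $g,h\in\Aper(z)\cap D_n$ with $x(g)=g^{-1}y_1$, $x(h)=h^{-1}y_2$ and $y_1\neq y_2$ in $\pi^{-1}\{z\}$. Using disjoint neighborhoods $U_1,U_2$ of $y_1,y_2$ one covers $X_n$ by \emph{open} sets $W_{k,n}\subseteq Z\times Y^G$ whose $\psi$-preimages are controlled by Lemma \ref{auxiliar-1}:
$$
\psi^{-1}(W_{k,n})\subseteq \bigcup_{m>k}\bigcup_{g\in D_n}g\left(\bigcup_{v\in D_{m-1}^{-1}}v\pi^{-1}C_m\right).
$$
Then, taking the approximating measures $\mu_i$ of Lemma \ref{convergencia} (with $\mu_i\to\omega\in(\pi^*)^{-1}\{\nu\}$ and $\psi^*(\mu_i)\to\mu$), openness of $W_{k,n}$ and the portmanteau inequality $\mu(W_{k,n})\leq\liminf_i\psi^*(\mu_i)(W_{k,n})$ combined with (P7) of Proposition \ref{suitable-sequence} yield $\mu(W_{k,n})\leq|D_n|\sum_{m>k}2^{-m}$, hence $\mu(X_n)=0$ for every $n$. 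Your second fallback gestures in this direction (it is the same family of estimates as in Proposition \ref{continuity}), but the decomposition into the sets $X_n$, the open covers $W_{k,n}$, and the portmanteau step are the substance of the proof and are missing from your proposal.
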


\begin{proof}
By Lemma \ref{simple}, if $(z,x)\in X\setminus \psi(Y)$, then there exist $g,h\in \Aper(z)$ and $y_1\neq y_2\in \pi^{-1}\{z\}$ such that $x(g)=g^{-1}y_1$ and $x(h)=h^{-1}y_2$. Let $U_1$ and $U_2$ be disjoint open neighbourhoods of $y_1$ and $y_2$ respectively. We denote
$$
W_{z_k,g,h,x}=z_kC_k\times g^{-1}U_1\times h^{-1}U_2\times Y^{G\setminus\{g,h\}},
$$
where $z=(z_i\Gamma_i)_{i\in\NN}$ and $k\in\NN$. We have $(z,x)\in W_{z_k,g,h,x}$. If $y\in \psi^{-1}(W_{z_k,g,h,x})$ then $g\in G\setminus \Aper(y)$ or $h\in G\setminus \Aper(y)$. Furthermore, since $g,h\notin \bigcup_{i=1}^kz_i\Gamma_iD_{i-1}$, we have that $g$ or $h$ are in $J_m(y)$, for some $m>k$. Thus, from Lemma \ref{auxiliar-1} we have
$$
\psi^{-1}(W_{z_k,g,h,x})\subseteq \bigcup_{m>k}\bigcup_{t\in \{g,h\}}t\left(\bigcup_{v\in D_{m-1}^{-1}}v\pi^{-1}C_m\right).
$$
Observe this implies the following: for every $n\in \NN$, let define $X_n$ the set of the
$(z,x)\in X$ for which   there exists   $g,h\in D_n$  such that $x(g)$  and  $x(h)$  are in different orbits. Then $X_n$ is contained in an open set $W_{k,n}\subseteq Z\times Y^G$, which is the union of sets of the kind $W_{z_k,g,h,x}$, for $z_k\in D_k^{-1}$, $x\in X_n\cap \tau^{-1}(z_kC_k)$ and  $g\neq h \in D_n$. Furthermore, we have
$$
\psi^{-1}(W_{k,n})\subseteq \bigcup_{m >k}\bigcup_{g\in D_n}g \left(\bigcup_{v\in D_{m-1}^{-1}}v\pi^{-1}C_m\right).
$$
Let $(\mu_i)_{i\in\NN}$ be the sequence of measures in $\M(Y)$  of Lemma \ref{convergencia} associated to the invariant measure $\mu\in\M(X,G)$. Let $\omega\in(\pi^*)^{-1}\{\nu\}$ be such that $\lim_{i\to\infty}\mu_i=\omega$ (see (1) of Lemma \ref{convergencia}). Since $W_{k,n}$ is an open set in $Z\times Y^G$, we have
\begin{eqnarray*} 
\mu(W_{k,n}) & \leq & \liminf_i \psi^*(\mu_i)(W_{k,n})\\
         & = & \liminf_i \mu_i(\psi^{-1}(W_{k,n}))\\
         &\leq & \liminf_i \mu_i\left( \bigcup_{m >k}\bigcup_{g\in D_n}g \left(\bigcup_{v\in D_{m-1}^{-1}}v\pi^{-1}C_m \right)\right)\\
         &\leq & \liminf_i\sum_{m>k}\sum_{g\in D_n}\mu_i\left( g \left(\bigcup_{v\in D_{m-1}^{-1}}v\pi^{-1}C_m \right)   \right)\\
         &\leq & \limsup_i\sum_{m>k}\sum_{g\in D_n}\mu_i\left( g \left(\bigcup_{v\in D_{m-1}^{-1}}v\pi^{-1}C_m \right)   \right)\\
         &\leq & \sum_{m>k}\sum_{g\in D_n}\limsup_i\mu_i\left( g \left(\bigcup_{v\in D_{m-1}^{-1}}v\pi^{-1}C_m \right)\right)
\end{eqnarray*}
Since $g \left(\bigcup_{v\in D_{m-1}^{-1}}v\pi^{-1}C_m\right)$ is a clopen set,  point (1) of Lemma \ref{convergencia} implies that
 \begin{eqnarray*}
 \limsup_i\mu_i\left( g \left(\bigcup_{v\in D_{m-1}^{-1}}v\pi^{-1}C_m \right)\right) & = & \lim_{i\to\infty}\mu_i\left( g \left(\bigcup_{v\in D_{m-1}^{-1}}v\pi^{-1}C_m \right)\right)\\
 &=& \omega\left(  \bigcup_{v\in D_{m-1}^{-1}}gv\pi^{-1}C_m  \right)\\
 &\leq & \frac{|D_{m-1}|}{|D_m|},
\end{eqnarray*}
because $\omega \in (\pi^*)^{-1}\{\nu\}$. Thus we get
$$
\mu(W_{k,n})\leq \sum_{m>k}|D_n|\frac{|D_{m-1}|}{|D_m|}\leq |D_n|\sum_{m>k}\frac{1}{2^m}.
$$
This implies that for every $\varepsilon>0$, there exists $k\in\NN$ such that $\mu(W_{k,n})\leq \varepsilon$. Since $X_n$ is contained in $W_{k,n}$, for every $k\in\NN$, we deduce that $\mu(X_n)=0$. Finally, due to $X\setminus \psi(Y)=\bigcup_{n\in\NN}X_n$, we conclude that $\mu$ is supported on $\psi(Y)$.
\end{proof}

\begin{proposition}\label{measures}
The map $\psi^*:\M(Y,G)\to \M(X,G)$ is an affine bijection. If in addition, $Y$ is a Cantor set, then $\psi^*$ is a homeomorphism.   
\end{proposition}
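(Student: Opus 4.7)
The plan is to verify four properties of $\psi^*$: that it maps $\M(Y,G)$ into $\M(X,G)$; that it is affine; that it is bijective; and, when $Y$ is a Cantor set, that both it and its inverse are continuous. Affineness is immediate from the definition. For the inclusion $\psi^*(\M(Y,G))\subseteq \M(X,G)$ I would use the equivariance of $\psi$, which gives $\psi^{-1}(gA)=g\psi^{-1}(A)$ for any Borel $A\subseteq X$ and $g\in G$, and combine this with invariance of $\mu$. For injectivity I would first observe that any $\mu\in\M(Y,G)$ satisfies $\pi^{*}\mu=\nu$ by unique ergodicity of the minimal equicontinuous system $(Z,G)$, so by property (P5) it is supported on $\pi^{-1}(Z\setminus Z_0)$; Lemma \ref{auxiliar-0} then yields that $\psi^*$ is injective on $\M(Y,G)$.

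For surjectivity, given $\mu\in\M(X,G)$, I would first argue that $\mu$ is supported on $\psi(Y')$, where $Y':=\pi^{-1}(Z\setminus Z_0)$. The inclusion $\mu(X\setminus\psi(Y))=0$ is Proposition \ref{supported}, and unique ergodicity applied to $\tau^*\mu$ yields $\mu(\tau^{-1}(Z_0))=\nu(Z_0)=0$; together, since $\tau\circ\psi=\pi$, this forces $\mu$ to be supported on $\psi(Y')$. On $Y'$ the map $\psi$ is a Borel injection, and $Y'$ is a $G$-invariant Borel subset of $Y$ because $Z_0$ is $G$-invariant. By the Lusin--Souslin theorem, $\psi|_{Y'}$ is a Borel isomorphism onto a Borel subset of $X$, so the formula $\omega(A):=\mu(\psi(A\cap Y'))$ defines a Borel probability measure on $Y$. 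A short computation using injectivity of $\psi|_{Y'}$ shows $\psi^*(\omega)=\mu$, and $G$-invariance of $\omega$ follows from $G$-invariance of $Y'$ together with equivariance of $\psi$ and invariance of $\mu$.

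When $Y$ is a Cantor set, Proposition \ref{continuity} already provides continuity of $\psi^*$ on $\M(Y,G)$. Since $\M(Y,G)$ is a closed, hence compact, subset of the compact metrizable space $\M(Y)$, and $\M(X,G)$ is Hausdorff, a continuous bijection between them is automatically a homeomorphism, giving the final assertion.

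The main technical obstacle is the surjectivity step: producing a $G$-invariant Borel preimage along the merely Borel (not continuous) map $\psi$. The decisive input is that any $G$-invariant measure on $X$ can charge neither the bad fibres over $Z_0$ (by (P5) combined with unique ergodicity) nor the complement of $\psi(Y)$ (by Proposition \ref{supported}); what remains is precisely the set where $\psi$ is a Borel isomorphism, which is what makes the pushforward construction well-defined and equivariant.
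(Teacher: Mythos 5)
Your proposal is correct and follows essentially the same route as the paper: injectivity via Lemma \ref{auxiliar-0} (using that invariant measures on $Y$ project to $\nu$ and hence avoid $\pi^{-1}(Z_0)$), surjectivity via Proposition \ref{supported}, and the homeomorphism claim via Proposition \ref{continuity} plus compactness. The only difference is that you spell out the surjectivity step in detail (discarding $\tau^{-1}(Z_0)$ by unique ergodicity and pulling back through the Borel isomorphism $\psi|_{Y'}$ via Lusin--Souslin, exactly the mechanism the paper invokes through \cite[Theorem 2.8]{Gl03}), which the paper compresses into a single sentence.
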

\begin{proof}
Lemma \ref{auxiliar-0} and Proposition \ref{supported} imply that $\psi^*$ is an affine bijection between $\M(Y,G)$ and $\M(X,G)$. Proposition \ref{continuity} implies that $\psi^*$ is a homeomorphism when $Y$ (and then $X$) is a Cantor set.
\end{proof}

\begin{proof}[Proof of Theorem \ref{theorem}]
 This a consequence of Propositions \ref{Borel-map}  and \ref{measures}. 
\end{proof}

\bigskip

 \section{Proof of Theorem \ref{Main-Toeplitz-measures}}\label{General-case}

 The goal of this section is to prove Theorem \ref{Main-Toeplitz-measures} and Corollary \ref{corollary}. 
 For that purpose, we will prove the following  proposition:

 \begin{proposition}\label{main-tool}
 Let $G$ be a non-amenable countable group. Let  $(Y,G)$ be a minimal topological dynamical system  verifying the following properties:
\begin{enumerate}
\item There exists a minimal Cantor system $(X,G)$ without invariant pro\-ba\-bi\-li\-ty measures,  that is an almost one-to-one extension of $(Y,G)$.
\item There exists a minimal subshift $(Z,G)$ that is an almost one-to-one extension of $(Y,G)$.
\end{enumerate}
 Then there exists a minimal subshift $(\tilde{Z}, G)$, with no invariant probability measures, that is an almost one-to-one extension of $(Y,G)$. 
\end{proposition}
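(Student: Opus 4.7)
My plan is to begin by forming the fibered product
$W=X\times_Y Z=\{(x,z)\in X\times Z\colon \pi_X(x)=\pi_Z(z)\}$,
where $\pi_X\colon X\to Y$ and $\pi_Z\colon Z\to Y$ are the almost one-to-one factor maps provided by hypotheses (1) and (2), and choosing a minimal subsystem $(M,G)\subseteq (W,G)$. By minimality of $X$ and $Z$, the coordinate projections $p_X\colon M\to X$ and $p_Z\colon M\to Z$ are surjective factor maps, so $M$ is infinite (as $X$ is) and totally disconnected, hence a minimal Cantor system by Lemma~\ref{Cantor}. Any invariant probability measure on $M$ would push forward under $p_X$ to an invariant measure on $X$, so $\M(M,G)=\emptyset$. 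Moreover, whenever $y\in Y$ lies in the dense $G_\delta$ intersection of the one-to-one sets of $\pi_X$ and $\pi_Z$, the fiber of $M\to Y$ over $y$ reduces to the single point $(x_y,z_y)$ with $\pi_X^{-1}(y)=\{x_y\}$ and $\pi_Z^{-1}(y)=\{z_y\}$; hence $M\to Y$ is an almost one-to-one extension.

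Next I would construct a sequence of finite-alphabet symbolic factors of $M$. Let $\Sigma$ be the alphabet of $Z$, set $\mathcal Q=\{\{z\in Z\colon z(1_G)=\sigma\}\colon \sigma\in\Sigma\}$, and define $\mathcal R_0:=p_Z^{-1}(\mathcal Q)$. Pick a sequence $(\mathcal R_n)_{n\in\NN}$ of finite clopen partitions of $M$, each refining both $\mathcal R_0$ and the previous one, with $\bigvee_n\mathcal R_n$ generating the topology of $M$. The equivariant continuous coding $\rho_n\colon M\to \mathcal R_n^G$, $\rho_n(m)(g)=[\mathcal R_n](g^{-1}m)$, has as its image a minimal subshift $M_n\subseteq \mathcal R_n^G$ over the finite alphabet $\mathcal R_n$. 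Because $\mathcal R_n$ refines $\mathcal R_0$, the coordinate $p_Z(m)$ is recoverable from $\rho_n(m)$, so $\rho_n$ descends to a well-defined factor map $M_n\to Y$ that remains almost one-to-one (the preimage in $M_n$ of a one-to-one point of $M\to Y$ is still a singleton). Finally, since $\bigvee_n\mathcal R_n$ separates the points of $M$, the product of the codings yields a topological $G$-conjugacy $M\cong \varprojlim_n M_n$.

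The main obstacle, and the heart of the argument, is to prove that some $M_n$ already has an empty space of invariant probability measures. I will argue by contradiction: suppose $\M(M_n,G)\neq\emptyset$ for every $n$, and denote by $\tau_{m,n}\colon M_m\to M_n$ (for $m\geq n$) the bonding factor maps, with $\tau_{m,n}^{*}$ the induced continuous affine pushforward on invariant measures. The sets $E_n^{(m)}:=\tau_{m,n}^{*}(\M(M_m,G))\subseteq \M(M_n,G)$ form a decreasing family of nonempty compact sets, so $E_n:=\bigcap_{m\geq n}E_n^{(m)}$ is nonempty. A routine inverse-limit argument (the inverse limit of a countable system of nonempty compact Hausdorff spaces with continuous bonding maps is nonempty) then produces a compatible sequence $(\mu_n)_n$ with $\mu_n\in E_n$ and $\tau_{n+1,n}^{*}\mu_{n+1}=\mu_n$; by Kolmogorov consistency this family assembles into a $G$-invariant Borel probability measure on $M\cong \varprojlim_n M_n$, contradicting $\M(M,G)=\emptyset$. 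Hence $\M(M_{n_0},G)=\emptyset$ for some $n_0$, and $\tilde Z:=M_{n_0}$ is the required minimal subshift, almost one-to-one over $Y$, with no invariant probability measure.
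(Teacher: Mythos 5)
Your proposal is correct, and it reaches the conclusion by a route that is organized differently from the paper's, although the underlying ingredients (a fiber-product construction over $Y$, symbolic codings by clopen partitions, and a compactness argument for measures) are the same. The paper proceeds in three steps: Lemma \ref{symbolic-factor} builds a specific minimal Cantor system $\tilde X\subseteq X\times Z$ (the closure of the graph of $x\mapsto\hat{\pi_Z}(\phi(x))$ over the residual set), then Lemma \ref{sub-ext-1-1}, via Lemma \ref{subshift-factor}, extracts a subshift factor $W$ of $\tilde X$ with no invariant measures and finally joins it with $Z$ inside $W\times Z$ to restore the almost one-to-one property over $Y$. You instead take an arbitrary minimal subsystem $M$ of the fiber product $X\times_Y Z$; your singleton-fiber argument over the common residual set of $\pi_X$ and $\pi_Z$ replaces the more delicate verifications of Lemmas \ref{symbolic-factor} and \ref{2-comp-almost}, since you only need $M\to Y$ (not $M\to Z$) to be almost one-to-one. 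Your second idea is to force every coding partition $\mathcal R_n$ to refine $p_Z^{-1}(\mathcal Q)$, so that each symbolic factor $M_n$ already factors onto $Z$, hence onto $Y$, with singleton fibers over $Y$ persisting; this makes the paper's final joining step (the product with $Z$ in Lemma \ref{sub-ext-1-1}) unnecessary, at the price of the small extra check that almost one-to-one-ness survives the coding, which you correctly supply. Finally, where the paper shows that some fixed-level coding admits no invariant measure by working inside $\M(X)$ with the sets $Z_A$ and a weak-$*$ limit (Lemma \ref{subshift-factor}), you run the same compactness phenomenon through the inverse system $\bigl(\M(M_n,G),\tau_{m,n}^*\bigr)$ together with a Kolmogorov-type extension on $M\cong\varprojlim_n M_n$; this uses a (standard) measure-extension step the paper avoids, but it buys a cleaner endgame, since the levels at which you test for invariant measures are exactly the subshifts you ultimately want. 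Both arguments are sound; yours essentially merges the paper's Lemmas \ref{subshift-factor} and \ref{sub-ext-1-1} into a single coding step.
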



\begin{lemma}\label{subshift-factor}
 Let $(X, G)$ be a  Cantor system with no invariant probability measures. Then there exists a subshift $(Z,G)$  with no invariant probability measures which is a topological factor of $(X,G)$.
 \end{lemma}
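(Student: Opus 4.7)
The plan is to build the subshift $Z$ from a carefully chosen finite clopen partition of $X$, so that a quantitative obstruction to invariant measures on $X$ transports to the symbolic factor. The starting point is a standard Hahn--Banach duality: for any compact $G$-system, $\M(X,G)=\emptyset$ if and only if the convex cone
\[
V=\Bigl\{\sum_{i=1}^{n}(f_i-f_i\circ g_i)\ :\ n\in\NN,\ f_i\in C(X),\ g_i\in G\Bigr\}\subseteq C(X)
\]
contains a strictly positive function. Indeed, any invariant probability measure integrates every element of $V$ to zero, and conversely, if $\mathbbm{1}$ cannot be approximated by elements of $V-C(X)_+$, then separating $\mathbbm{1}$ from the closure of $V-C(X)_+$ produces a positive linear functional vanishing on $V$, i.e., an invariant probability measure after normalization. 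Using this together with the compactness of $X$ (to rescale), I obtain $f_1,\dots,f_n\in C(X)$ and $g_1,\dots,g_n\in G$ with
\[
\sum_{i=1}^{n}(f_i-f_i\circ g_i)\geq 1\quad\text{on }X.
\]

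Next, I would use Lemma \ref{Lemma21} to approximate each $f_i$ uniformly by a simple function $\tilde f_i=\sum_{K\in\mathcal K}a_K^{(i)}\mathbbm{1}_K$ attached to a common finite clopen partition $\mathcal K=\{K_1,\dots,K_m\}$ of $X$. Since each right-translation $h\mapsto h\circ g_i$ is an isometry of $C(X)$ for the uniform norm, approximating $f_i$ within $\varepsilon$ also approximates $f_i\circ g_i$ within $\varepsilon$; taking $\varepsilon<1/(4n)$ yields
\[
\sum_{i=1}^{n}\bigl(\tilde f_i-\tilde f_i\circ g_i\bigr)\geq \tfrac{1}{2}\quad\text{on }X.
\]

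The partition $\mathcal K$ now determines the desired symbolic factor: setting $\Sigma=\{1,\dots,m\}$, define $\pi\colon X\to\Sigma^G$ by $\pi(x)(g)=k$ iff $g^{-1}x\in K_k$. This map is continuous and equivariant for the shift action on $\Sigma^G$, so $Z:=\pi(X)$ is a closed $G$-invariant subset of $\Sigma^G$, hence a subshift factor of $(X,G)$. Each $\tilde f_i$ descends through $\pi$: defining $F_i\in C(\Sigma^G)$ by $F_i(z)=a_{z(1_G)}^{(i)}$, one checks $\tilde f_i=F_i\circ\pi$ and, from equivariance of $\pi$, $\tilde f_i\circ g_i=(F_i\circ g_i)\circ\pi$. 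Surjectivity of $\pi\colon X\to Z$ then transfers the inequality to $Z$:
\[
\sum_{i=1}^{n}\bigl(F_i|_Z-(F_i|_Z)\circ g_i\bigr)\geq \tfrac{1}{2}\quad\text{on }Z,
\]
and integrating against any hypothetical $\nu\in\M(Z,G)$ forces $0\geq 1/2$, so $\M(Z,G)=\emptyset$.

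The main obstacle is the initial Hahn--Banach duality; once the quantitative coboundary is in hand, the approximation by a finite partition and the construction of the associated symbolic factor are routine, and the final inequality on $Z$ survives purely from the surjectivity of $\pi$.
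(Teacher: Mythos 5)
Your argument is correct, but it runs along a genuinely different route from the paper. The paper's proof is a soft compactness argument: it takes a nested sequence of clopen partitions spanning the topology of $X$, shows via weak-$*$ compactness of $\M(X)$ that some fixed partition $\P_n$ already witnesses the absence of invariant measures (otherwise a limit of measures invariant on each $\P_n$ would be invariant on all clopen sets, hence invariant), and then derives the conclusion for the symbolic factor $Z=\pi(X)$ by lifting a hypothetical invariant measure on $Z$ back to $X$ through the surjection $\pi^*:\M(X)\to\M(Z)$ (Lemma \ref{sobre}). You instead produce a quantitative witness on $X$ --- a finite sum of coboundaries $\sum_i(f_i-f_i\circ g_i)\geq 1$, obtained from the Hahn--Banach characterization of the non-existence of invariant measures --- approximate the $f_i$ by $\mathcal K$-measurable simple functions (Lemma \ref{Lemma21}) so that the coboundary sum stays $\geq 1/2$, and push the inequality down to $Z$ because simple functions over the partition factor through $\pi$; this makes the transfer of ``no invariant measures'' completely transparent and avoids lifting measures from $Z$ to $X$ altogether. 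The trade-off is that your first step imports the duality statement ($\M(X,G)=\emptyset$ iff the cone of coboundary sums contains a function bounded below by a positive constant), which is standard and your separation sketch is essentially right, but in a polished write-up you should either cite it (it is in the spirit of the criteria in \cite{KL16}) or spell out the Hahn--Banach separation in full: the subspace $V$ and the cone $-C(X)_+$ force the separating functional to vanish on $V$ and to be positive, and normalizing by its value on $\mathbbm{1}$ yields the invariant measure in the contrapositive. Everything after that point --- the common refinement for the finitely many $f_i$, the isometry of composition with each $g_i$, the equivariance of $\pi$ and the identity $\tilde f_i=F_i\circ\pi$, and the final contradiction $0\geq 1/2$ against any $\nu\in\M(Z,G)$ --- is sound.
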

\begin{proof}
  Let $(\P_n)_{n\in\NN}$ be a nested sequence of finite clopen partitions of $X$ such that\\ $\{A\in \P_n: n\in\NN\}$ spans the topology of $X$. 
For every clopen set $A\subseteq X$, let define
$$
Z_A=\left \{\mu \in \M(X): \mu(A)=\mu(gA) \mbox{ for every } g\in G\right\}. 
$$
We claim there exists $n\in\NN$ such that $\bigcap_{A\in\P_n}Z_A=\emptyset$. Suppose this is not true, then for every $n\in \NN$ there exists $\mu_n\in \bigcap_{A\in\P_n}Z_A$. Let $(\mu_{n_i})_{i\in\NN}$ be a subsequence that converges to $\mu\in\M(X)$. Given a clopen set $A\subseteq X$ there exists $i_0\in\NN$ such that for every $i\geq i_0$, there exist $A_{i,1},\cdots, A_{i,k_i}$ in $\P_{n_i}$ such that $A=\bigcup_{j=1}^{k_i}A_{i,j}$. This implies that for every $i\geq i_0$, and every $g\in G$, 
$$\mu_{n_i}(A)=\sum_{j=1}^{k_i}\mu_{n_i}(A_{i,j})=\sum_{j=1}^{k_i}\mu_{n_i}(gA_{i,j})=\mu_{n_i}(gA).$$
Since $A$ and $gA$ are clopen, we have $\mu(A)=\lim_{i\to \infty}\mu_{n_i}(A)$ and $\mu(gA)=\lim_{i\to \infty}\mu_{n_i}(gA)$, which implies that $\mu$ is invariant, contradicting that $\M(X,G)=\emptyset$. This shows the claim.

 Let $n\in\NN$ be such that $\bigcap_{A\in\P_n}Z_A=\emptyset$. Let $\P_n=\{A_1,\cdots, A_k\}$ and $\Sigma=\{1,\cdots, k\}$.
 Let $\pi: X\to \Sigma^G$ be the function defined as
$$
\pi(x)=(x(g))_{g\in G}, \mbox{ where } x(g)=i \mbox{ if and only if } g^{-1}x\in A_i. 
$$ 
The map $\pi$ is continuous and equivariant with respect to the shift action on $\Sigma^G$. This implies that $\pi:X\to \pi(X)$ is a factor map. The set $Z=\pi(X)$ is a subshift of $\Sigma^G$. The factor map $\pi$ induces an affine surjective and continuous transformation  $\pi^*:\M(X)\to \M(Z)$ given by $\pi^*\mu(A)=\mu(\pi^{-1}(A))$, for every Borel set $A\subseteq Z$ (see Lemma \ref{sobre}).  Suppose there exists  $\mu_0\in \M(Z, G)$ and  let $\mu\in (\pi^*)^{-1}\{\mu_0\}$. For every $1\leq i\leq k$ let $C_i=\{x\in Z: x(1_G)=i\}$. For every $g\in G$ we have $gA_i=g\pi^{-1}C_i=\pi^{-1}gC_i$, which implies
$$
\mu(gA_i)=\mu(\pi^{-1}gC_i)=\mu_0(gC_i)=\mu_0(C_i)=\mu(\pi^{-1}C_i)=\mu(A_i).
$$
From this, we get that $\mu\in \bigcap_{A\in\P_n}Z_A$, which is a contradiction.  This shows that $\M(Z,G)$ has no invariant probability measures.
 \end{proof}



The proof of Theorem \ref{main-tool}, will be the consequence of a series of results that we show below.

\begin{lemma}\label{sub-ext-1-1}
Let $G$ be a non-amenable countable group. Let $(Y,G)$ be a minimal topological dynamical system verifying the following properties:
\begin{enumerate}
 \item There exist a minimal Cantor system $(X,G)$, with no invariant probability measures,  and an   almost one-to-one factor map $\pi_X:X\to Y$.
 \item There exist a minimal subshift $(Z,G)$, and factor maps $\pi_Z:Z\to Y$ and $\pi:X\to Z$.
 \item $\pi_X=\pi_Z\circ \pi$.
\end{enumerate}
Then there exists a minimal subshift $(\tilde{Z},G)$, with no invariant probability measures, that is an almost 1-1 extension of $(Y,G)$.
\end{lemma}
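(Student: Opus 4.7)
The plan is to combine the given factor map $\pi : X \to Z$ with an auxiliary no-invariant-measure symbolic factor of $X$, producing a subshift over a product alphabet which inherits both the almost 1-1 property over $Y$ and the absence of invariant measures.

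First, I would apply Lemma \ref{subshift-factor} to the Cantor system $(X, G)$ to obtain a minimal subshift $(W, G) \subseteq \Sigma^G$ over some finite alphabet $\Sigma$ with $\M(W, G) = \emptyset$, together with a factor map $\phi : X \to W$. Writing $Z \subseteq \Omega^G$ for a finite alphabet $\Omega$, I then define $\Phi : X \to (\Omega \times \Sigma)^G$ coordinatewise by $\Phi(x)(g) = (\pi(x)(g), \phi(x)(g))$ for $g \in G$, and set $\tilde Z := \Phi(X)$. The map $\Phi$ is continuous and equivariant, so $\tilde Z$ is a subshift of $(\Omega \times \Sigma)^G$, and minimality of $\tilde Z$ is inherited from that of $X$ via the factor map $\Phi$.

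Next, the coordinate projections give factor maps $p_1 : \tilde Z \to Z$ and $p_2 : \tilde Z \to W$, and the composition $\tilde \pi := \pi_Z \circ p_1 : \tilde Z \to Y$ is the intended almost 1-1 factor onto $Y$. The absence of invariant measures on $\tilde Z$ is then immediate: any $\mu \in \M(\tilde Z, G)$ would push forward under $p_2$ to an invariant measure on $W$ by Lemma \ref{sobre}, contradicting $\M(W, G) = \emptyset$.

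The main step, and the only nontrivial point, is that $\tilde \pi$ is almost 1-1. Let $Y_0 \subseteq Y$ be the dense set on which $\pi_X^{-1}(y)$ is a singleton, say $\{x\}$. Since $\pi_X = \pi_Z \circ \pi$ and $\pi$ is surjective, one obtains $\pi_Z^{-1}(y) = \{\pi(x)\}$; moreover $\pi^{-1}(\pi(x)) \subseteq \pi_X^{-1}(y) = \{x\}$, so $\pi^{-1}(\pi(x)) = \{x\}$ as well. If $(z, w) \in \tilde \pi^{-1}(y)$, write $(z, w) = \Phi(x')$ for some $x' \in X$: then $z = \pi(x') \in \pi_Z^{-1}(y) = \{\pi(x)\}$ forces $\pi(x') = \pi(x)$, hence $x' = x$ and $w = \phi(x)$. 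Therefore $\tilde \pi^{-1}(y) = \{\Phi(x)\}$ for every $y \in Y_0$. This diagram chase through the three given factor maps is the heart of the argument; everything else reduces to invocations of Lemma \ref{subshift-factor} and Lemma \ref{sobre}.
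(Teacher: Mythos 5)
Your proposal is correct and follows essentially the same route as the paper: you apply Lemma \ref{subshift-factor} to get a measure-free symbolic factor $(W,G)$ of $(X,G)$, form the joining $x\mapsto(\pi(x),\phi(x))$ to get a minimal subshift over the product alphabet, kill invariant measures via the projection onto $W$, and verify the almost 1-1 property over $Y$ by the same diagram chase through a point whose $\pi_X$-fiber is a singleton. No gaps; the argument matches the paper's proof up to the order of the coordinates.
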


\begin{proof}
From Lemma \ref{subshift-factor}, there exist a minimal subshift $(W,G)$, with no invariant probability measures, and a topological factor map $\phi:X\to W$. Let $\tilde{\pi}:X\to W\times Z$ be the map given by $\tilde{\pi}(x)=(\phi(x), \pi(x))$, for every $x\in X$.
The map $\phi$ is continuous and equivariant, which implies that $\tilde{\pi}:X\to \tilde{Z}=\tilde{\pi}(X)\subseteq W\times Z$ is a factor map.  
Observe that $(\tilde{Z}, G)$ is a minimal subshift of $(\Sigma_W\times \Sigma_Z)^G$, where $\Sigma_W$ and $\Sigma_Z$ are the finite alphabets on which $W$ and $Z$ are defined, respectively.
Let $\tau_W:\tilde{Z}\to W$ and $\tau_Z: \tilde{Z}\to Z$ the respective projection maps. 
These maps are continuous, equivariant, and by the minimality of $W$ and $Z$, they satisfy  $\tau_W(\tilde{Z})=W$ and $\tau_Z(\tilde{Z})=Z$.
Since $(\tilde{Z}, G)$ is an extension of $(W,G)$, the space  of invariant probability measures of $(\tilde{Z},G)$ is empty. 
On the other hand,  $\tau: \tilde{Z}\to Y$ defined by $\tau=\pi_Z\circ \tau_Z$, is a factor map.
By hypothesis, there exists $y_0\in Y$ such that $|\pi_X^{-1}\{y_0\}|=1$. Let $z_1$ and $z_2$ in $\tilde{Z}$ such that $\tau(z_1)=\tau(z_2)=y_0$.
Let $x_1, x_2\in X$ be such that $z_1=(\phi(x_1), \pi(x_1))$ and $z_2=(\phi(x_2), \pi(x_2))$. 
We have $\tau(z_1)=\pi_Z(\pi(x_1))=\pi_X(x_1)=y_0$, which implies that $x_1\in \pi_X^{-1}\{y_0\}$.
Similarly, we get $x_2\in \pi_X^{-1}\{y_0\}$, which implies $x_1=x_2$ and then $z_1=z_2$.
This shows that $(\tilde{Z}, G)$ is an almost 1-1 extension of $(Y,G)$. 
\begin{figure}
\begin{align}\label{figure1}
    \xymatrix{&\tilde{Z}\ar[dl]_{\tau_W}\ar^{\tau_Z}[dr]&\\
    W&\ar[l]_{\phi} X \ar[u]_{\tilde{\pi}} \ar[r]^{\pi} \ar[d]_{\pi_X}& Z\ar[dl]^{\pi_Z}\\
&Y&
}
\end{align}
\caption{Diagram corresponding to the different maps  of Lemma \ref{sub-ext-1-1}.}
\end{figure}
\end{proof}

The following lemma is a well known result.
Nevertheless, we add a proof for completeness.
\begin{lemma}\label{2-comp-almost}
Let $(X,G),(Y,G)$ and $(Z,G)$ be topological dynamical systems.  Suppose that $(Y,G)$ minimal.
    If $\pi: X\to Y$ and $\phi:Y\to Z$ are two almost $1$-$1$ factor maps, then $\phi\circ \pi:X\to Z$ is an almost $1$-$1$ factor map. 
\end{lemma}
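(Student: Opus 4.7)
The plan is to exhibit a dense subset of $Z$ on which the preimage under $\phi\circ\pi$ is a singleton. Set
$$
Y_0=\{y\in Y: |\pi^{-1}\{y\}|=1\},\qquad Z_0=\{z\in Z: |\phi^{-1}\{z\}|=1\},
$$
which are dense in $Y$ and $Z$ respectively by hypothesis. Since $(\phi\circ\pi)^{-1}\{z\}=\pi^{-1}(\phi^{-1}\{z\})$, every $y\in Y_0\cap\phi^{-1}(Z_0)$ gives $\phi^{-1}\{\phi(y)\}=\{y\}$ and $\pi^{-1}\{y\}$ a singleton, hence $(\phi\circ\pi)^{-1}\{\phi(y)\}$ is a singleton. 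So it suffices to show that $\phi(Y_0\cap\phi^{-1}(Z_0))$ is dense in $Z$, and for that I will show $Y_0\cap\phi^{-1}(Z_0)$ is dense in $Y$.

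First I would establish that $Y_0$ and $Z_0$ are $G_\delta$ sets. Since all phase spaces are compact metric, for any factor map $\rho\colon A\to B$ the map $b\mapsto\mathrm{diam}(\rho^{-1}\{b\})$ is upper semicontinuous (using that $\rho$ is a closed map), so $\{b:\mathrm{diam}(\rho^{-1}\{b\})<1/n\}$ is open for each $n$ and the intersection over $n$ equals the set of points with singleton fiber. Applying this to $\pi$ and to $\phi$ makes $Y_0$ and $Z_0$ $G_\delta$, and then $\phi^{-1}(Z_0)$ is $G_\delta$ in $Y$ as the continuous preimage of a $G_\delta$.

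Next I would use the minimality of $Y$ to boost density. Both $Y_0$ and $\phi^{-1}(Z_0)$ are $G$-invariant (having a singleton fiber under an equivariant map is a $G$-equivariant condition). Since $\phi$ is surjective and $Z_0\neq\emptyset$, the set $\phi^{-1}(Z_0)$ is a nonempty $G$-invariant subset of the minimal system $(Y,G)$, so it contains a full orbit, which is dense by minimality; hence $\phi^{-1}(Z_0)$ is dense in $Y$. Now $Y$ is compact metric, hence a Baire space, so the intersection of the two dense $G_\delta$ sets $Y_0$ and $\phi^{-1}(Z_0)$ is a dense $G_\delta$ subset of $Y$.

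To conclude, for any nonempty open $V\subseteq Z$, the open set $\phi^{-1}(V)\subseteq Y$ is nonempty and therefore meets $Y_0\cap\phi^{-1}(Z_0)$; any such intersection point $y$ provides $\phi(y)\in V$ with $(\phi\circ\pi)^{-1}\{\phi(y)\}$ a singleton. Thus the set of points in $Z$ with singleton $(\phi\circ\pi)$-fiber is dense in $Z$, which proves that $\phi\circ\pi$ is almost $1$-$1$. The only mildly delicate step is that density of $Y_0$ alone is not sufficient, because one needs the image points to lie in $Z_0$ as well; this is precisely what is forced by combining the $G$-invariance of $\phi^{-1}(Z_0)$ with minimality of $Y$ and Baire category.
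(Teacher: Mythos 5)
Your argument is correct and follows essentially the same route as the paper: both use $G$-invariance of the singleton-fibre set of $\phi$ together with minimality of $(Y,G)$ to make $\phi^{-1}(Z_0)$ dense, then apply Baire's category theorem to intersect it with $Y_0$, a point of the intersection yielding a singleton fibre of $\phi\circ\pi$. The only differences are cosmetic: you verify explicitly that the singleton-fibre sets are $G_\delta$ (the paper takes this standard fact for granted by calling them residual), and you obtain density in $Z$ directly by pulling back open sets of $Z$, whereas the paper concludes by invoking the minimality of $Z$.
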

\begin{proof} Let $S=\{y\in Y : |\pi^{-1}(y)|=1\}$ and $T=\{z\in Z : |\phi^{-1}(z)|=1\}$ be the residual sets given by the almost $1$-$1$ factor maps $\pi$ and $\phi$, respectively.
Consider $y\in \phi^{-1}(T)$. 
Since $T$ is $G$-invariant and $\phi$ is a factor map, the orbit of $y$ is contained in  $\phi^{-1}(T)$.
Thus the minimality of $Y$ implies that $\phi^{-1}(T)$ is a $G_\delta$ dense set in $Y$.
By Baire's category theorem, we have $\phi^{-1}(T)\cap S\neq \emptyset$.
Take $y\in \phi^{-1}(T)\cap S$. Let $x$ and $x'$ be two elements of $X$ such that $\phi(\pi(x))=\phi(\pi(x'))=\phi(y)$.
Since $\phi(y)\in T$, we have $\pi(x)=\pi(x')=y\in S$. Consequently, $x=x'$, and since $Z$ is minimal, we conclude that $\phi\circ\pi$ is an almost $1$-$1$ factor map.
\end{proof}

\begin{lemma}\label{inverse-continuous} Let $(X,G)$ and $(Y,G)$ be two topological dynamical systems. Suppose that  $\pi:X\to Y$ is an almost 1-1 factor map. Let $T\subseteq \{y\in Y: |\pi^{-1}(y)|=1\}$.
Then the map $\hat{\pi}:T\to X$, given by $\hat{\pi}(y)=z$, where $\{z\}=\pi^{-1}\{y\}$, is continuous, injective and $\hat{\pi}^{-1}|_{\hat{\pi}(T)}$   is continuous.
\end{lemma}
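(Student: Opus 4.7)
The plan is to dispatch the three assertions in order, with continuity of $\hat{\pi}$ being the only point that requires any thought.

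First, injectivity is immediate: if $\hat{\pi}(y_1)=\hat{\pi}(y_2)=z$, then by definition $\pi(z)=y_1$ and $\pi(z)=y_2$, so $y_1=y_2$. Next, I would observe that the map $\hat{\pi}^{-1}\colon \hat{\pi}(T)\to T$ is simply the restriction $\pi|_{\hat{\pi}(T)}$: for every $z\in\hat{\pi}(T)$ there exists a unique $y\in T$ with $\hat{\pi}(y)=z$, and then $\pi(z)=y=\hat{\pi}^{-1}(z)$. Hence $\hat{\pi}^{-1}|_{\hat{\pi}(T)}$ is continuous as the restriction of the continuous map $\pi$.

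For the continuity of $\hat{\pi}\colon T\to X$, I would argue sequentially using the compactness of $X$. Fix $y\in T$ and a sequence $(y_n)_{n\in\NN}$ in $T$ with $y_n\to y$. Set $z_n=\hat{\pi}(y_n)$ and $z=\hat{\pi}(y)$, so that $\pi(z_n)=y_n$ and $\pi(z)=y$. I claim that $z_n\to z$. Indeed, let $(z_{n_k})_{k\in\NN}$ be any convergent subsequence of $(z_n)$, say $z_{n_k}\to z'\in X$. By continuity of $\pi$ we have $y_{n_k}=\pi(z_{n_k})\to \pi(z')$; but also $y_{n_k}\to y$, so $\pi(z')=y$. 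Since $y\in T$ and $\pi^{-1}\{y\}=\{z\}$, it follows that $z'=z$. Thus every convergent subsequence of $(z_n)$ converges to $z$, and by compactness of $X$ this forces $z_n\to z$.

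The only step that is not entirely formal is the last one: if $(z_n)$ did not converge to $z$, one would extract a subsequence bounded away from $z$, which by compactness of $X$ has a further convergent subsequence whose limit is different from $z$, contradicting the previous claim. This is standard, and no hypothesis beyond compactness of $X$, continuity of $\pi$, and the singleton property of $\pi^{-1}\{y\}$ for $y\in T$ is needed.
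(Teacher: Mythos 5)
Your proof is correct, but it takes a different route from the paper for the continuity of $\hat{\pi}$. The paper argues purely topologically: for a closed set $C\subseteq X$, compactness of $X$ makes $\pi(C)$ closed in $Y$, and one checks directly that $\hat{\pi}^{-1}(C)=\pi(C)\cap T$, so preimages of closed sets are closed in $T$. You instead argue sequentially: given $y_n\to y$ in $T$, any convergent subsequence of $z_n=\hat{\pi}(y_n)$ must, by continuity of $\pi$, have its limit in $\pi^{-1}\{y\}=\{\hat{\pi}(y)\}$, and compactness of $X$ then forces $z_n\to\hat{\pi}(y)$. Both arguments hinge on the same mechanism (compactness of $X$ plus the singleton fiber over points of $T$), and both handle injectivity and the identity $\hat{\pi}^{-1}|_{\hat{\pi}(T)}=\pi|_{\hat{\pi}(T)}$ the same way. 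The trade-off is mild: the paper's closed-set argument works verbatim for compact Hausdorff spaces without metrizability, while your sequential argument needs $Y$ (hence $T$) metrizable so that sequential continuity implies continuity --- which is harmless here, since the paper's standing convention is that phase spaces are compact metric; in exchange your version is arguably more concrete and makes explicit the upper semicontinuity of fibers that underlies the lemma.
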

\begin{proof}  
     Let $C\subseteq X$ be a closed set. Since $\pi$ is a continuous map between compact Hausdorff spaces, the set   $\pi(C)$ is closed.
     Let us show that $\pi(C)\cap T=\hat{\pi}^{-1}(C)$. If $y \in \pi(C)\cap T$, then there exists a unique $z \in \pi^{-1}\{y\}$  with $z\in C$. Since $\hat{\pi}(y)=z \in C$, we get  $\pi(C)\cap T\subseteq \hat{\pi}^{-1}(C)$.
     On the other hand, if $y \in \hat{\pi}^{-1}(C)$, then $y\in T$ and $\hat{\pi}(y)=\pi^{-1}(y)=z\in C$, which implies that $y\in \pi(C)\cap T\subseteq \pi(C)$. Thus we conclude that $\pi(C)\cap T=\hat{\pi}^{-1}(C)$. This shows that the pre-image by  $\hat{\pi}$ of every closed set   is closed in $T$, which implies that $\hat{\pi}$ is continuous. The injectivity follows from the fact that $T\subseteq \{y\in Y:|\pi^{-1}(y)|=1\}$.  Since $\hat{\pi}^{-1}|_{\hat{\pi}(T)}=\pi|_{\hat{\pi}(T)}$, the map $\hat{\pi}^{-1}|_{\hat{\pi}(T)}$ is continuous.
\end{proof}

\begin{lemma}\label{symbolic-factor} Let  $(Y,G)$ be a minimal topological dynamical system verifying the following properties:
\begin{enumerate}
\item There exist a minimal Cantor system $(X,G)$ and an almost one-to-one factor map $\phi:X\to Y$.
\item There exist a subshift $(Z,G)$ and an almost one-to-one factor map $\pi_Z:Z\to Y$.
\end{enumerate}
Then there exist
\begin{itemize}
 \item A minimal Cantor system $(\tilde{X},G)$ which is a topological extension of $(X,G)$.  
 \item A factor map $\pi:\tilde{X}\to Z$ and an almost one-to-one factor map $\pi_{\tilde{X}}:\tilde{X}\to Y$
\end{itemize}
 such that $$\pi_{\tilde{X}}=\pi_Z\circ \pi.$$
\end{lemma}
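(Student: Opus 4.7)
The plan is to build $\tilde X$ as the orbit closure in $X \times Z$ of a carefully chosen point, namely a simultaneous lift of a point $y_0 \in Y$ at which both $\phi$ and $\pi_Z$ are injective. Setting
\[
T_X = \{y \in Y : |\phi^{-1}(y)| = 1\}, \qquad T_Z = \{y \in Y : |\pi_Z^{-1}(y)| = 1\},
\]
both sets are $G$-invariant and, since $\phi$ and $\pi_Z$ are almost one-to-one with $Y$ minimal, they are dense $G_\delta$ subsets of $Y$; hence so is $T := T_X \cap T_Z$ by Baire's theorem. Fix $y_0 \in T$ with unique preimages $x_0 \in X$ and $z_0 \in Z$, and set
\[
\tilde X := \overline{O_G((x_0, z_0))} \subseteq X \times Z.
\]
Let $p_X$ and $p_Z$ denote the restrictions to $\tilde X$ of the two coordinate projections.

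I would first show that $(\tilde X, G)$ is minimal via a fibre-uniqueness argument: any minimal subset $M \subseteq \tilde X$ projects onto $X$ under $p_X$ (by minimality of $X$), so $M$ contains some $(x_0, z')$; the identity $\pi_Z(z') = \phi(x_0) = y_0$ together with $y_0 \in T_Z$ forces $z' = z_0$, whence $(x_0, z_0) \in M$ and $M = \tilde X$. The same argument simultaneously gives that $p_X : \tilde X \to X$ is a factor map onto the minimal $X$, so $\tilde X$ is a topological extension of $X$; assuming $Z$ minimal (as in the intended application through Lemma \ref{sub-ext-1-1}), the image of $\pi := p_Z$ is the whole of $Z$, yielding the factor map to $Z$.

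Next I would define $\pi_{\tilde X} := \pi_Z \circ \pi$ and verify the identity $\pi_{\tilde X} = \phi \circ p_X$: the equality $\pi_Z(z_0) = \phi(x_0)$ propagates along the orbit of $(x_0, z_0)$ by equivariance, then extends by continuity to all of $\tilde X$. For almost one-to-oneness of $\pi_{\tilde X}$, I would show the fibre $\pi_{\tilde X}^{-1}(y)$ is a single point for every $y \in T$: Lemma \ref{inverse-continuous} applied to $\phi$ and to $\pi_Z$ yields continuous partial inverses $T \to X$ and $T \to Z$, whose product $\hat \pi : T \to X \times Z$, $\hat\pi(y) = (x_y, z_y)$, is continuous. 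Since $T$ is $G$-invariant and $Y$ is minimal, $\{g y_0 : g \in G\} \subseteq T$ and any $y \in T$ is a limit of points $g_n y_0$. Continuity gives $(g_n x_0, g_n z_0) \to (x_y, z_y)$, and closedness of $\tilde X$ forces $(x_y, z_y) \in \tilde X$, so $\hat\pi(T) \subseteq \tilde X$ and $\pi_{\tilde X}^{-1}(y) = \{\hat\pi(y)\}$ for $y \in T$.

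Finally, $\tilde X$ is a closed, totally disconnected, compact metric subspace of $X \times Z$, is minimal, and is infinite because $p_X$ maps it onto the infinite $X$; Lemma \ref{Cantor} then upgrades $\tilde X$ to a Cantor set. The main obstacle I foresee is verifying that the natural 1-1 lift $\hat\pi(y) = (x_y, z_y)$ defined on $T$ actually lands \emph{inside} $\tilde X$ rather than in the possibly larger fibre product $\{(x, z) \in X \times Z : \phi(x) = \pi_Z(z)\}$; this is what certifies that $\pi_{\tilde X}$ is 1-1 above a dense $G_\delta$ of $Y$, and it is precisely this step that forces one to define $\tilde X$ as an orbit closure of a distinguished point and to lean on the continuity of the partial inverses granted by Lemma \ref{inverse-continuous}.
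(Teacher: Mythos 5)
Your proposal is correct, and it builds the same underlying object as the paper — a subsystem of $X\times Z$ obtained by lifting simultaneously over the residual set $T$, with Lemma \ref{inverse-continuous} supplying the continuous partial inverses, Baire's theorem giving $T$ dense, and Lemma \ref{Cantor} upgrading $\tilde X$ to a Cantor set — but your verifications follow a genuinely different route. The paper takes $\tilde X$ to be the closure of the whole graph $\{(x,\hat{\pi_Z}(\phi(x))):x\in\phi^{-1}(T)\}$, proves that the projection $\pi$ to $Z$ is almost one-to-one over the graph points via a Hausdorff separation argument with disjoint neighborhoods, deduces minimality from density of the graph combined with uniqueness of those $\pi$-fibres, and only then obtains almost one-to-one-ness of $\pi_{\tilde X}=\pi_Z\circ\pi$ from Lemma \ref{2-comp-almost}. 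You instead define $\tilde X$ as the orbit closure of the single lifted point $(x_0,z_0)$ (the two sets coincide once minimality is known), get minimality by noting that any minimal subset surjects onto $X$ and so contains a point $(x_0,z')$, where $z'=z_0$ is forced because $\tilde X$ sits inside the fibre product and $y_0\in T_Z$, and you check almost one-to-one-ness of $\pi_{\tilde X}$ directly: over $y\in T$ the fibre-product relation pins down both coordinates, so the fibre is the singleton $\{(x_y,z_y)\}$ (your argument that $\hat\pi(T)\subseteq\tilde X$ supplies non-emptiness, which also follows from surjectivity of $\pi_{\tilde X}$). This avoids both the separation argument and Lemma \ref{2-comp-almost}, at the cost of not recording that $\pi$ itself is almost one-to-one, which the statement does not require. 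One shared caveat: surjectivity of $\pi$ onto $Z$ needs $Z$ minimal, which the hypotheses of the lemma do not state; the paper's proof uses this silently, while you flag it explicitly — it does hold in the intended application (Proposition \ref{main-tool}), where $Z$ is a minimal subshift.
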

\begin{proof}
 Denote $\mathcal{T}=\{y\in Y: |\pi_Z^{-1}\{y\}|=1\}$,  and $H=\{y\in Y: |\phi^{-1}\{y\}|=1\}$ the $G$-invariant residual sets given by the almost $1$-$1$ factor maps $\pi_Z$ and $\phi$, respectively. 
Note that $T=\mathcal{T}\cap H$ is still a $G$-invariant residual set, thanks to the Baire's category theorem, and consequently, a non-empty set.
Consider the maps $\hat{\pi_Z}: T\to Z$ and $\hat{\phi}:T\to X$, such that  $\hat{\pi_Z}(y)$ and $\hat{\phi}(y)$ are the unique elements in $\pi_Z^{-1}\{y\}$ and  $\phi^{-1}\{y\}$, respectively, for every $y\in T$.    Lemma \ref{inverse-continuous} implies that these maps are continuous, injective  with continuous inverse maps.
Denote $X'=\phi^{-1}(T)$, and let define
 $\tilde{X}=\overline{\{(x,\hat{\pi_Z}(\phi (x))):x\in X'\}}\subset X\times Z$.
   The group $G$ acts on $X\times Z$ coordinatewise. The set $\tilde{X}$ is invariant with respect to this action, because is the closure of an invariant set. 
 Let $\tau_X: \tilde{X}\to X$ and $\pi:\tilde{X}\to Z$ the respective projection maps.  These maps are equivariant, and the minimality of $X$ and $Z$ imply that $\tau_X$ and $\pi$ are surjective, then $\tau_X$ and $\pi$ are factor maps. Moreover, 
  $\pi$ is almost $1$-$1$. Indeed, let $x\in X'$ and suppose that $(x',y')\in \tilde{X}$ is such that $\pi(x',y')=\pi(x, \hat{\pi_Z}(\phi(x)))$. This implies that $y'=\hat{\pi_Z}(\phi(x))$.   
Suppose that $x'\neq x$. Since $X$ is Hausdorff, there exist disjoint open neighborhoods $U\subseteq X$ and $V\subseteq X$ of $x$ and $x'$, respectively. The continuity of $\hat{\phi}$ and $\pi_Z$ imply that $\pi_Z^{-1}(\hat{\phi}^{-1}( U))$ is an open set of $Z$. Moreover, since $x\in U\cap X'$, we conclude that $\phi(x)\in \hat{\phi}^{-1}(U)\cap T$.
Thus, we have that $\hat{\pi_Z}(\phi(x))\in\pi_Z^{-1}(\phi^{-1}(U))$.
Observe that $V\times \pi_Z^{-1}(\hat{\phi}^{-1} (U))$ is an open neighborhood of $(x',\hat{\pi_Z}(\phi (x)))$.
Therefore, there exists $w\in X'$ such that $(w,\hat{\pi_Z}(\phi(w)))\in V\times \pi_Z^{-1}(\hat{\phi}^{-1} (U)) $.
Using that $\hat{\pi_Z}(\phi(w))\in \pi_Z^{-1}(\hat{\phi}^{-1}(U))$, we have $\phi(w)\in\hat{\phi}^{-1}(U)\cap T$ and consequently, $w\in U$. Hence, we obtain a contradiction with the fact that $U\cap V=\emptyset$. Thus, $x=x'$ and $\pi$ is an almost 1-1 factor.

Since $\pi_Z$ and $\pi$ are almost 1-1 factor maps, Lemma \ref{2-comp-almost} implies that $\pi_{\tilde{X}}=\pi_Z\circ \pi$ is an almost 1-1 factor map.
Finally, we will show that $\tilde{X}$ is minimal. Since the image by $\pi$ of every closed $G$-invariant set in $\tilde{X}$ is a closed $G$-invariant set in $Z$, the minimality of $Z$ implies that for every $\tilde{x}\in \tilde{X}$,  we have  $\pi(\overline{\{g\tilde{x}: g\in G\}})=Z$.
This implies that $\{\hat{\pi_Z}(\phi(x)):x\in X'\}\subseteq \pi(\overline{\{g\tilde{x}: g\in G\}})$. On the other hand, previously we have shown that every element in 
$\{\hat{\pi_Z}(\phi(x)):x\in X'\}$ has a unique pre-image by $\pi$, which implies that   
 
$\{(x,\hat{\pi_Z}(\phi(x))):x\in X'\}\subseteq \overline{\{g\tilde{x}: g\in G\}}$.  
Since $\{(x,\hat{\pi_Z}(\phi(x))):x\in X'\}$ is dense in $\tilde{X}$, we get that $\overline{\{g\tilde{x}: g\in G\}}=\tilde{X}$. Since $\tilde{X}$ is totally disconnected (it is a subspace of $X\times Z$) and infinite (it is an extension of $X$), by Lemma \ref{Cantor} we get that $(\tilde{X},G)$ is a minimal Cantor system.
 \begin{figure}
\begin{align}\label{figure2}
    \xymatrix{\tilde{X}\ar[r]^{\pi} \ar[d]_{\tau_X} \ar[dr]^-{\pi_{\tilde{X}}}& Z\ar[d]^-{\pi_Z}\\
X\ar[r]_{\phi}& Y }
\end{align}
\caption{Diagram corresponding to the different maps  of Lemma \ref{symbolic-factor}.}
\end{figure}
\end{proof}

\begin{proof}[{\bf Proof of Proposition \ref{main-tool}}]
Let $(Y,G)$, $(X,G)$ and $(Z,G)$ be the topological dynamical systems of Proposition \ref{main-tool}. 
Applying Lemma \ref{symbolic-factor} to these three systems, we get a minimal Cantor system $(\tilde{X},G)$ that is an extension of $(X,G)$. Since $(X,G)$ has no invariant probability measures, neither does $(\tilde{X},G)$. In addition, we get a factor map $\pi:\tilde{X}\to Z$, and an almost one-to-one factor map $\pi_{\tilde{X}}:\tilde{X}\to Y$, such that $\pi_{\tilde{X}}=\pi_Z\circ\pi.$

Now, applying Lemma \ref{sub-ext-1-1} to $(Y,G)$, $(\tilde{X},G)$, and  $(Z,G)$, we get there exists a minimal subshift $(\tilde{Z},G)$, with no invariant measures, which is an almost 1-1 factor of $(Y,G)$. 
\end{proof}

\begin{proof}[{\bf Proof of Theorem \ref{Main-Toeplitz-measures}}]
Let $G$ be a non-amenable residually finite group. Let $(Z,G)$ be a minimal equicontinuous Cantor system with a free orbit. Lemma \ref{equicontinuous-characterization} implies that $(Z,G)$ is conjugate to the odometer associated to a strictly decreasing sequence of finite index subgroups $(\Gamma_n)_{n\in\NN}$.   Since $G$ is non-amenable, \cite{GH97} implies there exists a minimal Cantor system $(\tilde{Y},G)$  with no invariant probability measures. Consider the system $(Y,G)$, where $Y=\tilde{Y}\times Z$ equipped with the product action. Let $W\subseteq Y$ be a minimal component of $(Y,G)$. Since $(W,G)$ is an extension of $(\tilde{Y},G)$, we have that $(W,G)$ is a minimal Cantor system with no invariant probability measures. Since $(W,G)$ is also an extension of $(Z,G)$,  applying Theorem \ref{theorem}, we get there exists a minimal Cantor system $(X,G)$ with no invariant probability measures, which is an almost one-to-one extension  of $(Z,G)$. By \ref{extension-Toeplitz}, $(Z,G)$ admits symbolic almost one-to-one extension. Thus, applying Proposition \ref{main-tool}, we get there exists a minimal subshift $(\tilde{Z},G)$, with no invariant probability measures, which is an almost one-to-one extension of $(Z,G)$. Finally, from Proposition \ref{1-1-extension} we deduce that $(\tilde{Z},G)$ is a Toeplitz $G$-subshift.
\end{proof}

\begin{proof}[{\bf Proof of Corollary \ref{corollary}}]
 The implications (1) to (2), (2) to (3), (3) to (4), and (4) to (5) are direct.    The implication (4) to (1) is  consequence of Theorem \ref{Main-Toeplitz-measures} and Proposition \ref{group-characterization}.
\end{proof}

\end{document}